\documentclass[pdflatex,sn-mathphys-num]{sn-jnl}

\usepackage{amsmath,amssymb,amsthm,mathtools}
\usepackage{graphicx}%
\usepackage{multirow}%
\usepackage{amsmath,amssymb,amsfonts}%
\usepackage{amsthm}%
\usepackage{mathrsfs}%
\usepackage[title]{appendix}%
\usepackage{xcolor}%
\usepackage{textcomp}%
\usepackage{manyfoot}%
\usepackage{booktabs}%
\usepackage{algorithm}%
\usepackage{algorithmicx}%
\usepackage{algpseudocode}%
\usepackage{listings}%
\usepackage{xcolor}
\usepackage{graphicx}
\usepackage{booktabs}
\usepackage{hyperref}
\hypersetup{
  colorlinks=true,
  linkcolor=blue!55!black,
  citecolor=blue!55!black,
  urlcolor=blue!55!black
}

\newcommand{\R}{\mathbb{R}}
\newcommand{\Om}{\Omega}
\newcommand{\dd}{\,\mathrm{d}}
\newcommand{\Uad}{\mathcal{U}_{ad}}
\newcommand{\UadBV}{\mathcal{U}_{ad}^{BV}}
\DeclareMathOperator*{\argmax}{arg\,max}
\DeclareMathOperator{\TV}{TV}
\DeclareMathOperator{\sgn}{sgn}

\theoremstyle{plain}
\newtheorem{theorem}{Theorem}[section]
\newtheorem{proposition}[theorem]{Proposition}
\newtheorem{lemma}[theorem]{Lemma}
\newtheorem{corollary}[theorem]{Corollary}
\theoremstyle{definition}
\newtheorem{definition}[theorem]{Definition}
\newtheorem{assumption}[theorem]{Assumption}
\theoremstyle{remark}
\newtheorem{remark}[theorem]{Remark}

\begin{document}

\title{Vital-Rate Feedback and Threshold Harvesting in Size-Structured Populations}

\author{\fnm{Louis Shuo} \sur{Wang}}\email{wang.s41@northeastern.edu}

\affil[2]{\orgdiv{Department of Mathematics},
\orgname{Northeastern University},
\orgaddress{
\city{Boston},
\state{MA},
\postcode{02115},
\country{USA}
}}

\abstract{
Size-selective harvesting is often justified by the intuition that larger
individuals have higher value and should therefore be harvested above a critical
size. We study a controlled size-structured transport
model in which a scalar environmental variable, generated by the population
itself, modifies the vital rates \(g(E,l)\) and \(\mu(E,l)\). The first result is
a corrected stationary closure theory: crowding-suppressed growth increases
residence density at the inflow, so the closure derivative is not determined by
pointwise profile monotonicity. Instead,
$\Phi'(E)=\mathsf A(E)-\mathsf C(E)$,
an integrated balance between residence-time amplification and cumulative
survival loss. The second result is an exact stationary adjoint reduction. The
nonlocal switching correction has rank one,
$S=S_{\rm red}-\frac{A}{1-B}\psi$,
and its zero-discount feedback gain satisfies the identity
$B(0)=\Phi'(E^*)$.
Thus the same scalar governs stationary closure sensitivity and threshold
fragility. We also establish finite-horizon well-posedness and compactified
optimal-control existence in a spatial-\(BV\) policy class. Numerical
certification in a density-dependent von Bertalanffy model shows when
minimum-size harvesting persists and when vital-rate feedback creates
multiple-switch harvest windows.}

\keywords{Size-structured populations; selective harvesting; environmental feedback; nonlocal adjoint; threshold policies.}

\pacs[MSC Classification]{92D25, 35F46, 49J20, 49K20, 35Q92}

\maketitle

% ======================================================================
\section{Introduction}\label{sec:intro}

\subsection{Existing theory: structured populations and selective harvesting}
\label{subsec:existing_theory}

Individual size governs growth, survival, reproductive output, susceptibility to
harvesting gear, and economic value. Population models that retain size as a
continuous physiological coordinate are therefore a natural framework for
selective harvesting. In such models, the population density is transported
through size by individual growth, depleted by natural and harvesting mortality,
and replenished through an inflow or renewal boundary. The resulting dynamics
belong to the general theory of physiologically structured populations
\cite{metz2014dynamics,diekmann2003steady,cai2026optimal,gyllenberg2007mathematical,
diekmann2025age,yu2026rigorous,bouguima2023analysis,chen2022maximum}.

A central feature of this theory is the environmental interaction variable:
individuals alter a finite-dimensional environmental quantity, while that
quantity modifies their vital rates. At steady state, the problem separates into
an individual-level balance under a frozen environment and a population-level
consistency condition that closes the feedback loop
\cite{diekmann2003steady}. For a scalar environment $E$, this frequently
leads to a closure equation of the form $E=\Phi(E)$. Scalar reductions, their
monotonicity properties, and their relation to persistence are classical
\cite{metz2014dynamics,webb1985theory,iannelli2017basic,cushing1994structured,thieme2018mathematics,wang2025analysis,gyllenberg2007mathematical}. The corresponding linearization around a steady state is also
known to have finite-dimensional feedback structure in the environmental
coordinate, with characteristic equations obtained from scalar or
finite-dimensional closure loops
\cite{diekmann1998formulation,diekmann2001formulation,wang2025analysis1,
diekmann2003steady}. Thus the use of an aggregate crowding
variable, the reduction of a stationary structured model to a scalar consistency
equation, and the finite-rank character of the environmental feedback
linearization form part of the established structured-population framework.

Optimal harvesting of continuously structured populations is likewise a mature
subject, with deep roots in fisheries bioeconomics
\cite{beverton2012dynamics,clark2010mathematical,opmeer2025optimal}. Early work on age-structured
harvesting established optimality conditions and characteristic-based control
formulations for McKendrick--von Foerster equations
\cite{gurtin1981optimal,murphy1990optimal,liu2025bidirectional,brokate1985pontryagin,
anita2013analysis,barbu1999optimal,feichtinger2003optimality,pontryagin2018mathematical}.
For size-structured populations, Kato proved existence of an optimal harvesting
rate for a nonlinear model with separable mortality and nonlinear fertility, and
separately derived a maximum principle and sufficient bang--bang conditions for a
linear size-structured model \cite{kato2008maximum,kato2008optimal}. Hritonenko, Yatsenko,
Goetz and Xabadia established maximum-principle and bang--bang results for
age- and size-structured harvesting models, including size-structured forest
management \cite{hritonenko2009bang}. Davydov and Platov studied stationary
profit maximization when growth, mortality, and exploitation depend on size
alone, obtaining existence, uniqueness, and necessary optimality conditions under
explicit parameter assumptions \cite{davydov2011optimization}.

Subsequent work has broadened the theory to include size-dependent areas of
action, spatial structure, discontinuous controls, hierarchical competition,
nonlocal boundary feedback, impulse controls, and measure-valued controls
\cite{anita2019optimal,anicta2019optimal,liang2025global,
hritonenko2012bang,coclite2017time,calsina1995model,wang2026damage,
cushing1994structured}. Measure-valued and balance-law formulations provide compactness
frameworks when classical controls or densities develop concentrations or rapid
oscillations \cite{bressan2013multidimensional,yu2026pattern,coclite2017time,
carrillo2012structured,debiec2018relative,wang2026algebraic}. Recent studies continue this line by
considering density dependence, arbitrary length-dependent harvest schedules,
generic cost functionals, and alternative optimal-control formulations
\cite{opmeer2025optimal,kakumani2026optimal}; broader optimal-control theory for
age-structured dynamic models provides further context
\cite{freiberger2025optimization,kang2024dynamical,khan2022optimal,wu2025age,chen2025numerical,wang2023global}. In particular, Kakumani and Tumuluri
\cite{kakumani2026optimal} treat optimal harvesting for a nonlinear
McKendrick--von Foerster equation with a generic cost functional.

The closest work to the present application is that of Opmeer
\cite{opmeer2025optimal}, which permits an arbitrary length-dependent harvesting schedule
in a density-dependent age/length formulation and numerically finds a conventional
minimum-size optimum. The present paper differs in its analytical focus. Here the
environmental feedback enters directly through the size-transport vital rates
$g(E,l)$ and $\mu(E,l)$, rather than only through recruitment, boundary feedback,
or a size-only exploitation term. This changes both the stationary closure and
the harvesting switching function. First, density-suppressed growth increases the
residence density at the inflow, so the sign of the stationary closure derivative
cannot be inferred from pointwise profile monotonicity; it is determined by an
integrated balance of residence-time amplification and cumulative survival loss.
Second, the stationary adjoint receives a full nonlocal correction that reduces
exactly to a rank-one term. Its zero-discount gain is the stationary closure
derivative,
$\displaystyle B|_{r=0}=\Phi'(E^*)$.

The paper therefore has two main analytical contributions and three supporting
components. The main contributions are: (i) a corrected stationary closure
criterion showing why vital-rate feedback destroys pointwise monotonicity but can
still yield an integrated uniqueness certificate; and (ii) an exact rank-one
stationary switching correction, together with the identity
$B|_{r=0}=\Phi'(E^*)$, which links closure sensitivity and adjoint feedback gain.
The supporting components are: (iii) finite-horizon well-posedness in a
spatial-$BV$ policy class; (iv) compactified optimal-control existence for
bounded-complexity size-selective policies; and (v) numerical certification in a
density-dependent von Bertalanffy model, including regimes where a conventional
minimum-size rule persists and regimes where it fails.

\subsection{Model and point of departure}
\label{subsec:point_departure}

Let
\[
\Om=[l_0,l_m],\qquad 0<l_0<l_m<\infty,
\]
be a bounded physiological-size domain. The population density
$x=x(t,l)\ge0$ satisfies the controlled transport equation
\cite{perthame2007transport,yu2026rigorous}
\begin{equation}\label{eq:intro_pde}
\partial_t x+\partial_l\!\big(g(E(t),l)x\big)
=
-\big(\mu(E(t),l)+u(t,l)\big)x,
\qquad
E(t)=\int_{l_0}^{l_m}\chi(l)x(t,l)\dd l,
\end{equation}
with initial datum $x(0,\cdot)=\phi$ and prescribed inflow flux
$\displaystyle g(E(t),l_0)x(t,l_0)=p(t)$.
Here $g(E,l)>0$ is the somatic growth rate, $\mu(E,l)\ge0$ is natural mortality,
$u(t,l)$ is size-specific harvesting mortality, and $E(t)$ is a weighted crowding
or resource-pressure variable. Depending on the weight $\chi$, the environment
may represent abundance, biomass, metabolic demand, or a more general load. We
assume the compensatory signs
\begin{equation}\label{eq:intro_signs}
\partial_E g(E,l)\le0,
\qquad
\partial_E\mu(E,l)\ge0,
\end{equation}
so that crowding slows growth and increases mortality.

The harvesting control satisfies $0\le u(t,l)\le u_{\max}$ a.e.\ on
$(0,T)\times\Om$, and the finite-horizon discounted yield is
\begin{equation}\label{eq:intro_J}
J_T(u)
=
\int_0^T e^{-rt}
\int_{l_0}^{l_m}c(l)u(t,l)x(t,l)\dd l\dd t,
\qquad r>0.
\end{equation}
Because the payoff is affine in $u$, a maximum-principle argument naturally leads
to bang--bang harvesting. The central structural question is stronger: when is
the bang--bang law also a minimum-size policy,
$\displaystyle u(t,l)=u_{\max}\mathbf 1_{\{l>L(t)\}}$?
A bang--bang law only asserts that $u^*(t,l)\in\{0,u_{\max}\}$ a.e.; a
minimum-size law additionally requires the harvested set to be an upper interval
in size. If the switching function is
$\displaystyle S(t,l)=c(l)-\lambda(t,l)$,
then bang--bang control follows from the sign of $S$, whereas a threshold policy
requires an upward single crossing of $S(t,\cdot)$. Global monotonicity of
$S(t,\cdot)$ is sufficient, but not necessary, for this property.

The environmental feedback in \eqref{eq:intro_pde} changes the stationary closure
in a simple but important way. For a stationary environment $E$ and stationary
control $u=u(l)$, the frozen-$E$ profile is
\[
x_E(l)
=
\frac{p}{g(E,l)}
\exp\!\left(
-\int_{l_0}^{l}
\frac{\mu(E,\xi)+u(\xi)}{g(E,\xi)}
\dd\xi
\right),
\]
and stationarity requires the scalar consistency condition
\[
E=\Phi(E),
\qquad
\Phi(E):=
\int_{l_0}^{l_m}\chi(l)x_E(l)\dd l.
\]
Although \eqref{eq:intro_signs} suggests that crowding should lower the profile,
pointwise monotonicity fails at the inflow. If $\chi(l_0)>0$, then
\[
\partial_E\log\!\big(\chi(l_0)x_E(l_0)\big)
=
-\frac{\partial_Eg(E,l_0)}{g(E,l_0)}
\ge0,
\]
with strict inequality whenever growth is genuinely suppressed at $l_0$. The
reason is that the boundary prescribes entering flux, not entering density:
slower growth raises the residence density $p/g(E,l_0)$ of newly entering
individuals. Farther along the size domain, increased mortality and delayed
progression reduce survival. Hence the derivative of the closure map is governed
by an integrated balance
$\displaystyle \Phi'(E)=\mathsf A(E)-\mathsf C(E)$,
where $\mathsf A(E)$ is residence-time amplification and $\mathsf C(E)$ is
cumulative survival/progression loss. This balance, rather than a pointwise sign
condition on $\partial_E x_E$, is the relevant stationary uniqueness criterion.

The same vital-rate feedback also produces a nonlocal adjoint correction. Since
\[
\delta E(t)=\int_{l_0}^{l_m}\chi(\xi)\,\delta x(t,\xi)\dd\xi,
\]
variations of $g(E,l)$ and $\mu(E,l)$ generate the formal source
\begin{equation}\label{eq:intro_adjsource}
\chi(l)
\int_{l_0}^{l_m}
\Big(
\partial_Eg(E,\xi)\,\partial_\xi\lambda(\xi)
-
\partial_E\mu(E,\xi)\lambda(\xi)
\Big)
x(\xi)\dd\xi .
\end{equation}
This is the mechanism absent from size-only adjoints or from models in which
feedback enters only through a recruitment or boundary functional. In the
spatial-$BV$ setting appropriate for finite-switch bang--bang policies, the term
involving $\partial_\xi\lambda$ must be interpreted by a $BV$
integration-by-parts identity, which converts \eqref{eq:intro_adjsource} into a
bounded zeroth-order nonlocal functional of the costate.

In the stationary canonical problem this nonlocality admits an exact scalar
reduction. Let $\lambda_{\mathrm{red}}$ be the reduced adjoint obtained by
freezing the environment and omitting the nonlocal sensitivity term, and let
$\psi$ be the response to one unit of scalar environmental source. Then the full
stationary adjoint has the form
\[
\lambda
=
\lambda_{\mathrm{red}}+\Gamma\psi,
\qquad
\Gamma=\frac{A}{1-B},
\]
where $A$ and $B$ are explicit scalar integrals at the operating point. Hence
\[
S=S_{\mathrm{red}}-\Gamma\psi,
\qquad
S_{\mathrm{red}}:=c-\lambda_{\mathrm{red}}.
\]
The threshold question is thereby reduced to the sign geometry of a computable
rank-one correction. The main stationary identity proved below is
$\displaystyle B|_{r=0}=\Phi'(E^*)$,
so the zero-discount adjoint feedback gain is exactly the stationary closure
derivative. This identity is the link between the corrected closure criterion and
the preservation or failure of minimum-size harvesting.
\subsection{Main contributions}
\label{subsec:main_contributions}

The paper has two main analytical contributions and three supporting
components. The first main contribution is a corrected stationary closure
criterion for size-structured populations whose environmental feedback enters
the vital rates \(g(E,l)\) and \(\mu(E,l)\). We show that density-suppressed growth
raises the residence density at the inflow, so pointwise profile monotonicity
fails; uniqueness is instead governed by the integrated balance
$\displaystyle \Phi'(E)=\mathsf A(E)-\mathsf C(E)$.
The second main contribution is an exact rank-one reduction of the stationary
nonlocal adjoint:
\[
\lambda=\lambda_{\mathrm{red}}+\frac{A}{1-B}\psi,
\qquad
S=S_{\mathrm{red}}-\frac{A}{1-B}\psi,
\]
together with the identity
$\displaystyle B(0)=\Phi'(E^*)$.
Thus the zero-discount adjoint feedback gain is precisely the stationary closure
derivative.

The supporting contributions are: finite-horizon well-posedness in a spatial-\(BV\)
policy class; compactified optimal-control existence for bounded-complexity
size-selective harvesting policies; and numerical certification in a
density-dependent von Bertalanffy model, showing when minimum-size harvesting is
preserved and when vital-rate feedback produces multiple-switch harvest windows.

\subsection{Organization of the paper}
\label{subsec:organization}

Section~\ref{sec:model} introduces the controlled size-structured model, the
environmental feedback variable, and the harvesting objective. 
Section~\ref{sec:stationary} derives the stationary closure equation
$E=\Phi(E)$ and the corrected integrated monotonicity criterion for uniqueness.
Section~\ref{sec:wellposed} proves finite-horizon well-posedness in the
spatial-$BV$ policy class. Section~\ref{sec:persistence} separates forced
persistence under prescribed recruitment from intrinsic demographic replacement.
Section~\ref{sec:control_existence} establishes compactified optimal-control
existence. Section~\ref{sec:adjoint} derives the nonlocal adjoint and the
corresponding variational inequality, and Section~\ref{sec:threshold} explains
when bang--bang harvesting becomes a minimum-size policy. 
Section~\ref{sec:stationary_switching} contains the main stationary switching
result: the exact rank-one correction and the identity
$B|_{r=0}=\Phi'(E^*)$. Section~\ref{sec:concrete} illustrates the theory
in a density-dependent von Bertalanffy model and maps the regimes in which
minimum-size harvesting persists or fails.
Finally, Appendix provides detailed proofs for the mathematical conclusions in the main text.
% ============================================================================
\section{Model and environmental feedback}\label{sec:model}

\subsection{Population state and physiological-size domain}
\label{subsec:state_space}

For each $t\ge0$, the state $x(t,\cdot)\in L^1_+(\Om)$ is the population-size
density, so that $\int_a^b x(t,l)\dd l$ is the number of individuals whose sizes
lie in $(a,b)\subseteq\Om$. In particular,
$\displaystyle N(t):=\int_{l_0}^{l_m}x(t,l)\dd l$
is total abundance. The lower endpoint $l_0$ is the recruitment size; the upper
endpoint $l_m$ is an exit size. Because growth is strictly positive, $l_0$ is an
inflow boundary and $l_m$ an outflow boundary. The model retains size but not
age: individuals of the same current size share the same instantaneous rates at a
given environmental state.

\subsection{Environmental interaction variable}
\label{subsec:environment}

Density dependence is mediated by the scalar environmental interaction variable
\begin{equation}\label{eq:E}
E(t):=\int_{l_0}^{l_m}\chi(l)x(t,l)\dd l,
\qquad
\chi\in L^\infty_+(\Om).
\end{equation}
Since $0\le E(t)\le \|\chi\|_{L^\infty(\Om)}N(t)$, the environment is controlled by total abundance but need not coincide with it. The functional \eqref{eq:E} is the finite-dimensional environmental closure of the structured model \cite{metz2014dynamics,diekmann2003steady}, separating the population--environment interaction into a readout $x\mapsto E=\int_\Om\chi x\,\dd l$ and a feedback $E\mapsto(g(E,\cdot),\mu(E,\cdot))$. Different choices of $\chi$ encode different mechanisms: $\chi\equiv1$ gives $E=N$; $\chi(l)=l$ gives a length-weighted load; $\chi(l)=\alpha l^\beta$ approximates a biomass or metabolic-demand load.

The scalar closure is deliberately parsimonious: it represents competition for a shared limiting resource without an explicit resource equation, on the assumption that the relevant environmental response is fast or well summarized by $E[x]$.

\subsection{Growth and natural mortality}
\label{subsec:vital_rates}

Let $g:\R_+\times\Om\to\R_+$ and $\mu:\R_+\times\Om\to\R_+$ be the growth and
mortality rates. We assume
\begin{align}
&g,\mu\in C^1(\R_+\times\Om),
\label{eq:H1}\\
&\text{for every }M>0:\quad
0<g_{\min}(M)\le g(E,l)\le g_{\max}(M)<\infty
\ \text{ on }[0,M]\times\Om,
\label{eq:H2}\\
&\mu(E,l)\ge\mu_{\min}\ge0,
\label{eq:H3}\\
&\partial_Eg(E,l)\le0,
\qquad
\partial_E\mu(E,l)\ge0,
\label{eq:H4}\\
&\partial_Eg,\ \partial_lg,\ \partial_E\mu,\ \partial_l\mu
\text{ are locally bounded on }\R_+\times\Om.
\label{eq:H5}
\end{align}
Assumption \eqref{eq:H2} is the bounded-environment positivity bound: on
each bounded environmental interval $[0,M]$ growth is strictly positive, so
characteristics move from $l_0$ toward $l_m$ and no boundary condition is needed
at $l_m$. We use it on the invariant interval $[0,M_T]$ in the finite-horizon
analysis (Section~\ref{sec:wellposed}) and on a stationary closure interval
$[0,M_{\mathrm{st}}]$ in Section~\ref{sec:stationary}; the calibrated growth law
of Section~\ref{sec:concrete} satisfies \eqref{eq:H2} in this local form (its
infimum over all $E\ge0$ is zero, but it is bounded below on every $[0,M]$). The
signs \eqref{eq:H4} describe compensatory density dependence. Slower growth
increases residence time within a size interval, which can locally increase
$x(t,l)$; increased mortality decreases survival to larger sizes. The competition
between residence-time accumulation and mortality loss is the source of the
corrected integrated closure analysis of Section~\ref{sec:stationary}.

As shown in Figure~\ref{fig:characteristics}, the critical characteristic (dashed line) partitions the phase plane into the initial-datum-fed region and the boundary-flux-fed region. The typical flattening of the curves near $l_0$ directly reflects the mathematical profile of the growth rate $g(E, l)$.

\begin{figure}[htbp]
    \centering
    \includegraphics[width=0.75\linewidth]{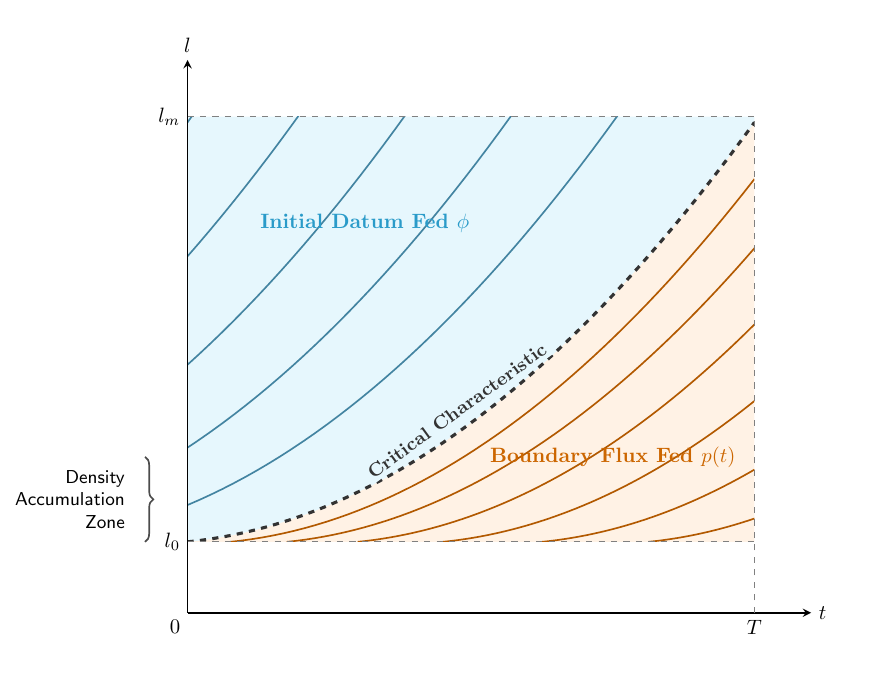}
    \caption{Typical shape of characteristic curves. The dashed critical characteristic separates the initial-datum-fed region (cyan) from the boundary-flux-fed region (orange), with its curvature determined by the growth rate $g(E, l)$.}
    \label{fig:characteristics}
\end{figure}

\subsection{Size-selective harvesting}
\label{subsec:harvesting}

The control $u=u(t,l)$ is the instantaneous fishing mortality, with the box
constraint
\[
0\le u(t,l)\le u_{\max}
\qquad
\text{for a.e.\ }(t,l)\in(0,T)\times\Om,
\]
and the admissible $L^\infty$ class is
\[
\Uad
:=
\left\{
u\in L^\infty((0,T)\times\Om):
0\le u\le u_{\max}\ \text{a.e.}
\right\}.
\]
A threshold or minimum-size policy is
\begin{equation}\label{eq:threshold_control}
u_L(t,l)
=
u_{\max}\mathbf 1_{\{l>L(t)\}},
\qquad L(t)\in[l_0,l_m].
\end{equation}
Such regulations are common because they are observable, enforceable, and protect
immature individuals \cite{ingram2006catch,wang2026breakdown}; nevertheless
\eqref{eq:threshold_control} is only one possible size-selective policy. With
$c(l)\ge0$ the value per harvested individual, the discounted yield is
\eqref{eq:intro_J}. Although $c$ is often increasing in size, monotonicity of $c$
alone does not imply a threshold policy, because the opportunity cost is the
adjoint variable, which may itself depend non-monotonically on size.

\subsection{Controlled population equation}
\label{subsec:state_equation}

The controlled dynamics are
\begin{equation}\label{eq:state}
\partial_t x
+
\partial_l\!\big(g(E(t),l)x\big)
=
-\big(\mu(E(t),l)+u(t,l)\big)x,
\qquad
(t,l)\in(0,T)\times\Om,
\end{equation}
with $E(t)=\int_{l_0}^{l_m}\chi(l)x(t,l)\dd l$ and data
\begin{equation}\label{eq:state_data}
x(0,l)=\phi(l),
\qquad
g(E(t),l_0)x(t,l_0)=p(t).
\end{equation}
Expanding the flux gives the characteristic form
\[
\partial_t x
+
g(E(t),l)\partial_lx
=
-\big(
\partial_lg(E(t),l)
+
\mu(E(t),l)
+
u(t,l)
\big)x,
\]
in which $-\partial_l g\,x$ is the Jacobian compression/dilution term of nonuniform
transport, not a demographic loss. The data satisfy
\[
\phi\in L^1_+(\Om)\cap L^\infty(\Om),
\qquad
p\in L^\infty_+(0,T),
\qquad
\chi\in L^\infty_+(\Om).
\]
Additional $BV$ assumptions are imposed only when needed in
Sections~\ref{sec:wellposed} and \ref{sec:adjoint}.

\subsection{Population balance and a priori estimate}
\label{subsec:balance}

Integrating \eqref{eq:state} over $\Om$ and using the inflow condition,
\begin{equation}\label{eq:abundance}
\begin{aligned}
N'(t)
&=
p(t)
-
g(E(t),l_m)x(t,l_m)
-
\int_{l_0}^{l_m}
\big(\mu(E(t),l)+u(t,l)\big)x(t,l)\dd l \\[4pt]
&\le
p(t)-\mu_{\min}N(t),
\end{aligned}
\end{equation}
so that
\[
N(t)
\le
e^{-\mu_{\min}t}N(0)
+
\int_0^t e^{-\mu_{\min}(t-s)}p(s)\dd s
\quad(\mu_{\min}>0),
\]
with the obvious modification when $\mu_{\min}=0$. Consequently
\begin{equation}\label{eq:Ebound}
0\le E(t)
\le
\|\chi\|_{L^\infty(\Om)}N(t)
\le M_T,
\end{equation}
for a constant $M_T$ depending on the horizon and data but not on the particular
admissible control. This confines all coefficient evaluations to the compact
environmental interval $[0,M_T]$.

The non-linear coupling of the system is illustrated in Figure~\ref{fig:feedback}. The feedback loop is closed as the population density determines the environmental index $E(t)$, which in turn modulates the vital rates.

\begin{figure}[htbp]
    \centering
    \includegraphics[width=0.85\textwidth]{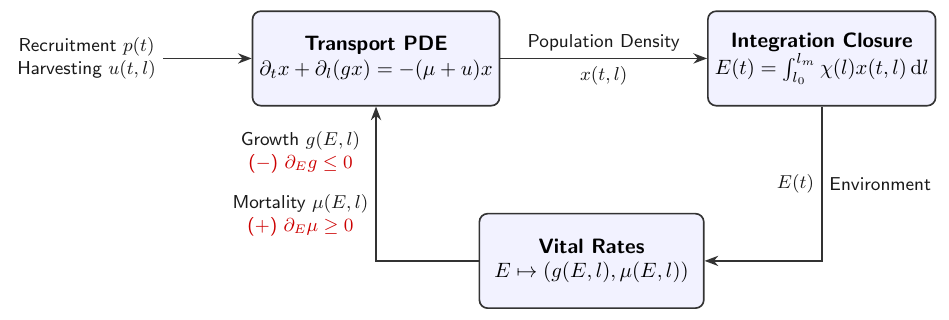}
    \caption{Feedback loop of the size-structured model. The population density couples with the integration closure via the environment-dependent vital rates ($g, \mu$).}
    \label{fig:feedback}
\end{figure}

\subsection{Exogenous recruitment and scope}
\label{subsec:recruitment_scope}

The inflow $p(t)$ is prescribed independently of the population, so the model
describes exogenous recruitment. If $p(t)>0$, the population may persist
even when its own reproductive output is insufficient to replace harvested
individuals; positivity of the forced system must not be read as demographic
self-sustainability. The endogenous counterpart replaces \eqref{eq:state_data} by
a renewal boundary condition; this diagnostic is developed in
Section~\ref{sec:persistence}. The optimization problem studied here is the forced
problem \eqref{eq:state}--\eqref{eq:state_data}.

% ============================================================================
\section{Stationary closure and failure of pointwise monotonicity}
\label{sec:stationary}

Throughout this section assume
\begin{equation}\label{eq:stationary_data}
p(t)\equiv p>0,
\qquad
u(t,l)\equiv u(l)\in L^\infty(\Om),
\qquad
0\le u(l)\le u_{\max}\ \text{a.e.},
\end{equation}
together with \eqref{eq:H1}--\eqref{eq:H5} and bounded continuous $E$-derivatives
on every compact environmental interval, justifying differentiation under the
integral sign.

\subsection{Frozen-environment stationary profile}
\label{subsec:stationary_profile}

A stationary pair $(x,E)$ satisfies
\[
\frac{\dd}{\dd l}\big(g(E,l)x(l)\big)
=
-\big(\mu(E,l)+u(l)\big)x(l),
\quad
g(E,l_0)x(l_0)=p,
\quad
E=\int_{l_0}^{l_m}\chi(l)x(l)\dd l.
\]
With the mortality-to-growth ratio
\[
q(E,l):=\frac{\mu(E,l)+u(l)}{g(E,l)}\ge0
\]
and $y_E:=g(E,\cdot)x_E$ solving $y_E'=-q(E,\cdot)y_E$, $y_E(l_0)=p$, one obtains
\begin{equation}\label{eq:stationary_profile}
x_E(l)
=
\frac{p}{g(E,l)}
\exp\!\left(
-\int_{l_0}^{l}
\frac{\mu(E,\xi)+u(\xi)}{g(E,\xi)}
\dd\xi
\right)
=
p\,\frac{1}{g(E,l)}\,\sigma_E(l),
\end{equation}
where $\sigma_E(l)=\exp(-\int_{l_0}^l q)$ is the survival to size $l$ and
$1/g(E,l)$ the residence time per unit size.

\begin{proposition}[Frozen-environment profile]
\label{prop:frozen_stationary_profile}
For every $E\in[0,M_{\mathrm{st}}]$ and every $u$ as in
\eqref{eq:stationary_data}, the stationary transport equation with flux datum $p$
has the unique solution \eqref{eq:stationary_profile}, and
$0<x_E(l)\le p/g_{\min}(M_{\mathrm{st}})$ on $\Om$.
\end{proposition}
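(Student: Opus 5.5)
The plan is to reduce the stationary transport problem to a linear scalar ODE for the flux variable $y:=g(E,\cdot)x$. Then existence, uniqueness and the bounds all follow from the explicit integrating factor.

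Fix $E\in[0,M_{\mathrm{st}}]$. By \eqref{eq:H1}--\eqref{eq:H2}, $l\mapsto g(E,l)$ is continuous on $\Om$ with $g(E,l)\ge g_{\min}(M_{\mathrm{st}})>0$. Also $\mu(E,\cdot)$ is continuous and $u\in L^\infty(\Om)$. Hence $q(E,\cdot)=(\mu(E,\cdot)+u)/g(E,\cdot)$ lies in $L^\infty(\Om)$ and is nonnegative.

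I interpret a solution in the natural weak sense: $y=g(E,\cdot)x$ is absolutely continuous on $\Om$, satisfies $y'=-q(E,\cdot)y$ a.e., and has $y(l_0)=p$. This is the only meaningful reading, because $u$ is merely $L^\infty$, so $x$ need not be $C^1$. Since multiplication by $g(E,\cdot)$ is a bijection on functions, with inverse multiplication by $1/g$, a solution $x$ corresponds exactly to a solution $y$ of this linear Carath\'eodory problem.

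For existence, the function $\sigma_E(l)=\exp(-\int_{l_0}^l q)$ is absolutely continuous, since its exponent is Lipschitz. It satisfies $\sigma_E'=-q\sigma_E$ a.e. and $\sigma_E(l_0)=1$, so $y=p\sigma_E$ is a solution.

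For uniqueness, take any AC solution $y$ and differentiate $y\,\sigma_E^{-1}$. The product of AC functions is AC, and
\[
(y\sigma_E^{-1})'=\sigma_E^{-1}(y'+qy)=0 \quad\text{a.e.}
\]
Hence $y\sigma_E^{-1}\equiv y(l_0)=p$, which yields \eqref{eq:stationary_profile}.

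The bounds come directly from this formula. We have $p>0$ and $\sigma_E(l)\in(0,1]$ because $q\ge0$ and $q$ is integrable on the bounded interval $\Om$. We also have $1/g(E,l)\in(0,1/g_{\min}(M_{\mathrm{st}})]$. Together these give $0<x_E(l)\le p/g_{\min}(M_{\mathrm{st}})$.

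There is no substantial obstacle. The only delicate point is being explicit that the equation is understood for $y$ in the AC/a.e. sense, since $u$ is only bounded measurable. This is also why the integrating-factor uniqueness argument must use the product rule for absolutely continuous functions rather than for classical derivatives.
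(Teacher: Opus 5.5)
Your proposal is correct and follows the paper's proof: you substitute $y_E=g(E,\cdot)x_E$ to obtain the linear ODE $y'=-q(E,\cdot)\,y$, $y(l_0)=p$, whose unique solution is $p\,\sigma_E$, and then divide by $g$ and use $0<\sigma_E\le 1$ and $g\ge g_{\min}(M_{\mathrm{st}})$ for the bound. Your explicit reading of the equation in the absolutely continuous (Carath\'eodory) sense, needed because $u$ is only $L^\infty$, makes precise a point the paper leaves implicit.
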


\subsection{Scalar environmental closure}
\label{subsec:scalar_closure}

Define the closure map
\begin{equation}\label{eq:Phi}
\Phi(E):=\int_{l_0}^{l_m}\chi(l)x_E(l)\dd l,
\end{equation}
so that the stationary consistency condition is $E=\Phi(E)$.

\begin{proposition}[Existence of a stationary environment]
\label{prop:Phi_existence}
With $g_{\min}(M):=\inf_{[0,M]\times\Om}g>0$ from \eqref{eq:H2},
\begin{equation}\label{eq:Phi_bound}
0\le\Phi(E)\le \frac{p\|\chi\|_{L^\infty(\Om)}}{g_{\min}(M)}(l_m-l_0)=:G(M)
\qquad\text{for }E\in[0,M].
\end{equation}
Assume the closure-confinement condition
\begin{equation}\label{eq:Hst}
\exists\,M_{\mathrm{st}}>0:\qquad
\Phi\big([0,M_{\mathrm{st}}]\big)\subseteq[0,M_{\mathrm{st}}].
\tag{H-st}
\end{equation}
Then $\Phi$ is continuous on $[0,M_{\mathrm{st}}]$ and $E=\Phi(E)$ has at least one
fixed point $E^*\in[0,M_{\mathrm{st}}]$. If $\chi\ge0$ is positive on a set of
positive measure, then $\Phi(E)>0$ and every fixed point is positive.
\end{proposition}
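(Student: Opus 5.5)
The plan is to work directly from the explicit representation \eqref{eq:stationary_profile} of Proposition~\ref{prop:frozen_stationary_profile}. For $E\in[0,M]$ we have $g(E,l)\ge g_{\min}(M)>0$ by \eqref{eq:H2}. We also have $q(E,l)\ge0$, since $\mu\ge0$ and $u\ge0$, so the survival factor satisfies $0<\sigma_E(l)\le1$. Hence
\[
0<x_E(l)=\frac{p\,\sigma_E(l)}{g(E,l)}\le\frac{p}{g_{\min}(M)}
\]
on $\Om$. Multiplying by $0\le\chi\le\|\chi\|_{L^\infty(\Om)}$ and integrating over an interval of length $l_m-l_0$ gives \eqref{eq:Phi_bound}. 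The lower bound $\Phi\ge0$ is immediate from $\chi\ge0$ and $x_E>0$.

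Next I establish continuity of $\Phi$ on $[0,M_{\mathrm{st}}]$. By \eqref{eq:H1}, $g$ and $\mu$ are continuous on the compact set $[0,M_{\mathrm{st}}]\times\Om$, hence uniformly continuous there, and $g\ge g_{\min}(M_{\mathrm{st}})>0$. It follows that the map $E\mapsto q(E,\cdot)$ is continuous into $L^1(\Om)$. The control $u$ does not depend on $E$ and contributes the fixed bounded term $u/g$, whose $E$-dependence comes only through $g$; this is again uniformly continuous because $u\in L^\infty$. Therefore $\int_{l_0}^l q(E,\xi)\dd\xi$ is continuous in $E$ uniformly in $l$. Consequently $x_E(l)$ is continuous in $E$ for each $l$, and it is bounded uniformly by $p/g_{\min}(M_{\mathrm{st}})$. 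Dominated convergence, with dominating function $\|\chi\|_\infty p/g_{\min}$, then gives continuity of $\Phi$. The main care needed here is to avoid assuming any regularity of $u$ in $l$; the argument above uses only $u\in L^\infty$.

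Existence of a fixed point follows from \eqref{eq:Hst}. The function $h(E):=\Phi(E)-E$ is continuous on $[0,M_{\mathrm{st}}]$, with $h(0)=\Phi(0)\ge0$ and $h(M_{\mathrm{st}})=\Phi(M_{\mathrm{st}})-M_{\mathrm{st}}\le0$. The intermediate value theorem (the one-dimensional Brouwer theorem) therefore yields $E^*$ with $\Phi(E^*)=E^*$.

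For positivity, suppose $\chi>0$ on a set $P$ of positive measure. Since $x_E>0$ everywhere on $\Om$, the integrand $\chi x_E$ is strictly positive on $P$ and nonnegative elsewhere, so $\Phi(E)\ge\int_P\chi x_E>0$. Every fixed point then satisfies $E^*=\Phi(E^*)>0$. I do not expect a genuine obstacle in any of these steps. The only point requiring attention is the continuity step, specifically the uniform bound on the denominator $g$ and the handling of the merely measurable control.
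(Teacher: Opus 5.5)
Your proof is correct and follows essentially the same route as the paper. You get the bound \eqref{eq:Phi_bound} from $0<\sigma_E\le1$ and $g\ge g_{\min}(M)$, continuity of $\Phi$ by dominated convergence using the uniform lower bound on $g$, the fixed point from the intermediate value theorem applied to $\Phi(E)-E$ under \eqref{eq:Hst}, and positivity from $x_E>0$; you simply spell out details the paper leaves implicit, notably the uniform-in-$l$ continuity of $\int_{l_0}^{l}q(E,\xi)\dd\xi$ that accommodates a merely bounded measurable $u$.
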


\begin{remark}[On \eqref{eq:Hst} and how it is met]
\label{rmk:Hst}
Condition \eqref{eq:Hst} is a genuine restriction, not automatic. Two sufficient
conditions are useful. \emph{(a) Crude a priori bound.} If $G(M_{\mathrm{st}})\le
M_{\mathrm{st}}$ for some $M_{\mathrm{st}}$, then \eqref{eq:Hst} holds; but $G$ may
fail this for all $M$, since $g_{\min}(M)$ can decay so fast that $G$ grows faster
than $M$ (e.g.\ $g_{\min}(M)\sim(1+M)^{-2}$ gives $G(M)\sim(1+M)^2$). This crude
test is often far too conservative because \eqref{eq:Phi_bound} discards survival
and the weighting. \emph{(b) Decreasing closure map.} If $\Phi$ is nonincreasing
on $[0,\infty)$, then $M_{\mathrm{st}}:=\Phi(0)$ satisfies \eqref{eq:Hst}, since
$0\le\Phi(E)\le\Phi(0)$ for $E\in[0,\Phi(0)]$. For the calibrated von Bertalanffy
model of Section~\ref{sec:concrete} the crude test (a) does not hold at the
baseline (the bound $G$ overestimates $\Phi$ by orders of magnitude), but $\Phi$
is decreasing on the relevant range (Figure~\ref{fig:closure}), so (b) applies and
we use $M_{\mathrm{st}}=\Phi(0)\approx1.04$; this is the interval on which the
numerical closure diagnostics are evaluated. Monotonicity of $\Phi$ also yields
uniqueness (Theorem~\ref{thm:integrated_monotonicity}); where $\Phi$ is not
globally monotone, \eqref{eq:Hst} must be checked directly, as we do numerically.
\end{remark}

\begin{remark}[Relation to existing theory]
\label{rmk:stationary_novelty}
Environmental closure equations and their fixed points are classical
\cite{metz2014dynamics,diekmann2003steady,gyllenberg2007mathematical}, and stationary
size-only exploitation has been studied \cite{davydov2011optimization}. We claim no
novelty for \eqref{eq:Phi} or Proposition~\ref{prop:Phi_existence}. The point of
this section is that feedback through the rates makes the sign of $\Phi'(E)$ a
competition of two integrated effects, so monotonicity cannot be read off from
$\partial_Eg\le0$, $\partial_E\mu\ge0$ pointwise.
\end{remark}

\subsection{Sensitivity of the frozen profile}
\label{subsec:profile_sensitivity}

To avoid the symbol clash with the discounted-mortality coefficient introduced in
Section~\ref{sec:stationary_switching}, we write the growth-sensitivity coefficient
here as $\alpha$:
\begin{equation}\label{eq:a_b_definitions}
\alpha(E,l):=-\frac{\partial_Eg(E,l)}{g(E,l)},
\qquad
b(E,l):=\partial_Eq(E,l)
=\partial_E\!\left(\frac{\mu(E,l)+u(l)}{g(E,l)}\right).
\end{equation}
Under \eqref{eq:H4}, $\alpha(E,l)\ge0$ and
\begin{equation}\label{eq:b_expansion}
b(E,l)
=
\frac{\partial_E\mu(E,l)\,g(E,l)-\big(\mu(E,l)+u(l)\big)\partial_Eg(E,l)}{g(E,l)^2}
\ge0,
\end{equation}
the latter a sum of two nonnegative terms. The logarithmic derivative of
\eqref{eq:stationary_profile} is
\begin{equation}\label{eq:dE_log_x}
\partial_E\log x_E(l)
=
\alpha(E,l)-\int_{l_0}^{l}b(E,\xi)\dd\xi,
\quad
\partial_Ex_E(l)
=
x_E(l)\left[\alpha(E,l)-\int_{l_0}^{l}b(E,\xi)\dd\xi\right].
\end{equation}
Thus the sensitivity is a positive local residence-time term $\alpha$ minus a
negative cumulative survival/progression term.

\subsection{Failure of pointwise monotonicity at the inflow}
\label{subsec:pointwise_failure}

A naive sufficient condition for strict decrease of $\Phi$ is
$\partial_Ex_E(l)<0$ for all $l$, i.e.\
$\alpha(E,l)<\int_{l_0}^l b(E,\xi)\dd\xi$ for all $l$. This is incompatible with
the inflow structure.

\begin{proposition}[Pointwise monotonicity fails at the inflow]
\label{prop:pointwise_failure}
For every $E\ge0$,
\begin{equation}\label{eq:inflow_sensitivity}
x_E(l_0)=\frac{p}{g(E,l_0)},
\qquad
\partial_Ex_E(l_0)
=
-\frac{p\,\partial_Eg(E,l_0)}{g(E,l_0)^2}
=
x_E(l_0)\alpha(E,l_0)\ge0.
\end{equation}
If $\partial_Eg(E,l_0)<0$, then $\partial_Ex_E(l_0)>0$, and by continuity there
is $\delta_E>0$ with $\partial_Ex_E(l)>0$ for $l\in[l_0,l_0+\delta_E)$. Hence the
pointwise decreasing-profile condition cannot hold whenever growth is strictly
crowding-suppressed at the inflow.
\end{proposition}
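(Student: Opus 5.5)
The argument is a direct evaluation of the closed-form profile \eqref{eq:stationary_profile} and of its sensitivity formula \eqref{eq:dE_log_x} at the inflow, followed by a continuity argument in $l$.

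\textbf{Inflow value and derivative.} Setting $l=l_0$ in \eqref{eq:stationary_profile}, the survival integral vanishes and $\sigma_E(l_0)=1$. Hence $x_E(l_0)=p/g(E,l_0)$, which is just the flux condition $g(E,l_0)x_E(l_0)=p$. Differentiating in $E$ with $p$ fixed gives
\[
\partial_E x_E(l_0)=-\frac{p\,\partial_Eg(E,l_0)}{g(E,l_0)^2}=x_E(l_0)\,\alpha(E,l_0).
\]
Alternatively, one can set $l=l_0$ in \eqref{eq:dE_log_x}, where the cumulative term $\int_{l_0}^{l_0}b$ is zero. Nonnegativity then follows from \eqref{eq:H4}, since $\alpha\ge0$, together with $x_E(l_0)>0$ from Proposition~\ref{prop:frozen_stationary_profile}. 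If $\partial_Eg(E,l_0)<0$, then $\alpha(E,l_0)>0$ because $g>0$ by \eqref{eq:H2}, and so $\partial_Ex_E(l_0)>0$.

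\textbf{Propagation to a neighbourhood.} I then show that $l\mapsto\partial_Ex_E(l)$ is continuous on $\Om$, and I will use the factorization \eqref{eq:dE_log_x} to do so. The factor $x_E(l)$ is continuous and positive. The function $\alpha(E,\cdot)=-\partial_Eg/g$ is continuous by \eqref{eq:H1}. The map $l\mapsto\int_{l_0}^l b(E,\xi)\dd\xi$ is continuous, and in fact Lipschitz, because $b(E,\cdot)\in L^\infty(\Om)$. This boundedness comes from \eqref{eq:b_expansion}, using the local bounds \eqref{eq:H5}, the lower bound \eqref{eq:H2}, and $0\le u\le u_{\max}$.

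The one point needing care is that $u$ is only $L^\infty$, so $b$ need not be continuous in $l$. Its primitive is nevertheless continuous, and that is all the argument requires. Continuity at $l_0$ then yields $\delta_E>0$ with $\partial_Ex_E(l)>0$ on $[l_0,l_0+\delta_E)$.

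For an explicit choice, I can take
\[
\delta_E=\frac{\alpha(E,l_0)}{2\|b(E,\cdot)\|_\infty}
\]
further reduced so that $\alpha(E,l)>\tfrac34\alpha(E,l_0)$ on the interval.

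\textbf{Conclusion.} Finally, the pointwise condition $\partial_Ex_E<0$ for all $l$ already fails at $l=l_0$ whenever $\partial_Eg(E,l_0)<0$. The argument is routine; the only real obstacle is justifying continuity despite the merely measurable control, and the Lipschitz bound on the primitive of $b$ handles this.
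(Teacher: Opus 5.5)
Your proposal is correct and follows the paper's proof: evaluate \eqref{eq:dE_log_x} at $l=l_0$, where the cumulative integral vanishes, multiply by $x_E(l_0)>0$, and get the local statement from continuity of the bracket $\alpha(E,\cdot)-\int_{l_0}^{\cdot}b(E,\xi)\dd\xi$. Your remark that only the primitive of $b$ must be continuous (it is Lipschitz because $b(E,\cdot)\in L^\infty$), not $b$ itself, supplies the justification that the paper's one-line ``continuity of the bracket'' leaves implicit.
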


\begin{remark}[Ecological reading]
\label{rmk:inflow_interpretation}
The boundary prescribes the entering flux $p$, not the entering density. Since
$x_E(l_0)=p/g(E,l_0)$, slower growth keeps recruits longer near $l_0$ and raises
their density there, before any cumulative mortality acts.
\end{remark}

\label{subsec:integrated_derivative}

Differentiating \eqref{eq:Phi} under the integral sign and using
\eqref{eq:dE_log_x},
\[
\Phi'(E)
=
\int_{l_0}^{l_m}
\chi(l)x_E(l)
\left[
\alpha(E,l)-\int_{l_0}^{l}b(E,\xi)\dd\xi
\right]\dd l.
\]
With the tail load
$\displaystyle W_E(\xi):=\int_{\xi}^{l_m}\chi(l)x_E(l)\dd l$,
Fubini's theorem gives
\begin{equation}\label{eq:Phi_prime_tail}
\Phi'(E)
=
\underbrace{\int_{l_0}^{l_m}\chi(l)x_E(l)\alpha(E,l)\dd l}_{\mathsf A(E)}
-
\underbrace{\int_{l_0}^{l_m}b(E,\xi)W_E(\xi)\dd\xi}_{\mathsf C(E)} .
\end{equation}
Here $\mathsf A(E)\ge0$ is the residence-time amplification induced by
crowding-suppressed growth, while $\mathsf C(E)\ge0$ is the cumulative
survival/progression loss, weighted by the downstream tail load $W_E$.

\begin{theorem}[Integrated monotonicity: a sufficient uniqueness certificate]
\label{thm:integrated_monotonicity}
If
\[
\mathsf C(E)>\mathsf A(E)
\qquad\text{for every }E\in[0,M_{\mathrm{st}}],
\]
equivalently $\Phi'(E)<0$ on $[0,M_{\mathrm{st}}]$, then $\Phi$ is strictly
decreasing on $[0,M_{\mathrm{st}}]$, and $E=\Phi(E)$ has exactly one solution
$E^*\in[0,M_{\mathrm{st}}]$, with unique profile $x^*=x_{E^*}$. This global
integrated-monotonicity condition is sufficient for uniqueness; it is not
necessary, since a non-monotone $\Phi$ may still meet the identity once.
\end{theorem}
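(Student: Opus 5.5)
The proof combines the derivative formula \eqref{eq:Phi_prime_tail} with the existence result Proposition~\ref{prop:Phi_existence}, and then closes with a one-dimensional fixed-point argument. I would proceed in three steps.

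\textbf{Step 1 (differentiability of $\Phi$).} I first justify that $\Phi\in C^1([0,M_{\mathrm{st}}])$ and that $\Phi'(E)=\mathsf A(E)-\mathsf C(E)$ there. By \eqref{eq:H1}, \eqref{eq:H2} and \eqref{eq:H5} on the compact interval $[0,M_{\mathrm{st}}]$, the integrand $\chi(l)x_E(l)$ from \eqref{eq:stationary_profile} is differentiable in $E$. Its derivative is given by \eqref{eq:dE_log_x}, and it is dominated uniformly in $E$ by
\[
\|\chi\|_{L^\infty}\,\frac{p}{g_{\min}}\Bigl(\sup\alpha+(l_m-l_0)\sup|b|\Bigr).
\]
Here $b$ is bounded because $u\le u_{\max}$, $g\ge g_{\min}(M_{\mathrm{st}})$, and $\partial_E g,\partial_E\mu$ are bounded. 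Dominated convergence then permits differentiation under the integral sign. Fubini's theorem applied to the double integral $\int\chi x_E\int_{l_0}^l b$ gives the tail form \eqref{eq:Phi_prime_tail}. Consequently, the hypothesis $\mathsf C>\mathsf A$ is literally equivalent to $\Phi'<0$ on $[0,M_{\mathrm{st}}]$.

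\textbf{Step 2 (strict monotonicity and uniqueness).} Since $\Phi'<0$ on the interval, the mean value theorem shows that $\Phi$ is strictly decreasing there. Set $h(E):=E-\Phi(E)$. Then $h$ is continuous and strictly increasing, because $h'=1-\Phi'>1$, so $h$ has at most one zero in $[0,M_{\mathrm{st}}]$. Proposition~\ref{prop:Phi_existence}, which uses \eqref{eq:Hst}, gives at least one zero. Alternatively, existence follows directly: $h(0)=-\Phi(0)\le0$ and $h(M_{\mathrm{st}})\ge0$ by confinement. Hence there is exactly one $E^*$. The profile is then $x^*=x_{E^*}$, and it is unique by Proposition~\ref{prop:frozen_stationary_profile}, since any stationary pair $(x,E)$ must have $x=x_E$ with $E$ a fixed point of $\Phi$.

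\textbf{Step 3 (non-necessity).} For the final sentence of the theorem, I note that uniqueness only requires $h$ to vanish once. A non-monotone $\Phi$ that crosses the diagonal once gives such an example: for instance, $\Phi$ can increase on a small interval near $0$ (which the $\mathsf A$ term permits) while staying above the diagonal there. I would give this as a short remark rather than an explicit model.

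\textbf{Main obstacle.} The only delicate step is the domination bound in Step 1, which is needed for differentiating under the integral sign and for continuity of $\Phi'$. This requires the uniform bounds on $\partial_E g$ and $\partial_E\mu$ over the compact interval $[0,M_{\mathrm{st}}]$, which are assumed in this section. Once that bound is in place, Steps 2 and 3 are elementary one-dimensional arguments.
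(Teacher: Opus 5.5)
Your proposal is correct and follows essentially the same route as the paper. The tail formula \eqref{eq:Phi_prime_tail} gives strict decrease, so $E-\Phi(E)$ has at most one zero; this is the same as the paper's contradiction $E_1=\Phi(E_1)>\Phi(E_2)=E_2$. Existence comes from Proposition~\ref{prop:Phi_existence} and uniqueness of the profile from Proposition~\ref{prop:frozen_stationary_profile}. Your only addition is the explicit dominated-convergence justification for differentiating under the integral sign, which the paper takes as a standing hypothesis of Section~\ref{sec:stationary}.
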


\begin{corollary}[Uniform margin]
\label{cor:uniform_integrated_margin}
If $\mathsf C(E)-\mathsf A(E)\ge\kappa>0$ on $[0,M_{\mathrm{st}}]$, then
$\Phi'(E)\le-\kappa$, so the sufficient condition of
Theorem~\ref{thm:integrated_monotonicity} holds with a margin and is robust under
perturbations preserving it.
\end{corollary}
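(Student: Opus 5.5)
The corollary follows directly from the tail representation \eqref{eq:Phi_prime_tail} and Theorem~\ref{thm:integrated_monotonicity}. For every $E\in[0,M_{\mathrm{st}}]$ the identity $\Phi'(E)=\mathsf A(E)-\mathsf C(E)$ holds pointwise. This uses the standing assumption of bounded continuous $E$-derivatives on compact environmental intervals, which justifies differentiating \eqref{eq:Phi} under the integral sign, together with Fubini's theorem for the tail load $W_E$. Hence the hypothesis $\mathsf C-\mathsf A\ge\kappa$ gives $\Phi'(E)\le-\kappa<0$ on the whole interval. In particular the strict inequality $\mathsf C(E)>\mathsf A(E)$ required by Theorem~\ref{thm:integrated_monotonicity} holds, so $\Phi$ is strictly decreasing and the fixed point $E^*$ is unique. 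Existence of $E^*$ comes from Proposition~\ref{prop:Phi_existence} under \eqref{eq:Hst}.

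For the quantitative part, I would first note that $\Phi$ is $C^1$ on $[0,M_{\mathrm{st}}]$. Continuity of $\mathsf A$ and $\mathsf C$ in $E$ follows by dominated convergence, because $x_E\le p/g_{\min}(M_{\mathrm{st}})$ by Proposition~\ref{prop:frozen_stationary_profile} and $\alpha,b$ are bounded on compacts by \eqref{eq:H2} and \eqref{eq:H5}. The mean value theorem then yields
\[
\Phi(E_2)-\Phi(E_1)\le-\kappa(E_2-E_1)\qquad\text{for }E_1<E_2 .
\]
This gives strict decrease with an explicit rate. It also makes $E\mapsto E-\Phi(E)$ have derivative $\ge1+\kappa$, so the fixed point is transversal.

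For robustness, I would fix a perturbed data set (for example $\tilde g,\tilde\mu,\tilde u,\tilde p,\tilde\chi$) with associated functionals $\tilde{\mathsf A},\tilde{\mathsf C}$. Suppose it keeps a margin $\tilde{\mathsf C}-\tilde{\mathsf A}\ge\kappa'>0$ on the relevant interval, for instance whenever $\sup_E|(\tilde{\mathsf A}-\tilde{\mathsf C})-(\mathsf A-\mathsf C)|<\kappa$. Then the same argument applies verbatim and uniqueness persists. The sup-norm closeness of $\mathsf A-\mathsf C$ would be obtained from $C^1$-in-$E$ closeness of the rates, via the explicit formulas \eqref{eq:a_b_definitions} and \eqref{eq:stationary_profile} and the uniform bounds above.

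The only point needing care is that \eqref{eq:Hst} must still hold for the perturbed closure map, so that it still maps the interval into itself. I would handle this as in Remark~\ref{rmk:Hst}(b). If the perturbed $\tilde\Phi$ is decreasing on $[0,\infty)$, one may take $\tilde M_{\mathrm{st}}=\tilde\Phi(0)$. Otherwise \eqref{eq:Hst} must be assumed as part of ``perturbations preserving it.'' I expect no real obstacle beyond stating this scope precisely.
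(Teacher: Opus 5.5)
Your argument is correct and matches the paper's reasoning. The paper gives no separate proof: the corollary is immediate from the tail identity \eqref{eq:Phi_prime_tail}, $\Phi'=\mathsf A-\mathsf C$, which turns the margin hypothesis into $\Phi'\le-\kappa<0$ and hence into the hypothesis of Theorem~\ref{thm:integrated_monotonicity}. Your mean-value rate, the transversality remark, and the caveat that \eqref{eq:Hst} must also survive the perturbation are sound additions, and they make the paper's vague ``robust under perturbations'' clause more precise than the paper itself does.
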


\begin{remark}[Why the integrated form is the appropriate test]
\label{rmk:integrated_weaker}
Pointwise decrease would imply $\Phi'<0$, but
Proposition~\ref{prop:pointwise_failure} shows it is unattainable at the inflow.
The integrated condition \eqref{eq:Phi_prime_tail} permits
$\partial_Ex_E(l)>0$ near $l_0$ provided the weighted negative sensitivity at
larger sizes dominates. It is thus not merely weaker than a pointwise test; it is
the monotonicity-based sufficient uniqueness condition compatible with the model's
inflow structure.
\end{remark}

\begin{remark}[Forward pointer]
\label{rmk:forward_pointer_unification}
The quantity $\Phi'(E^*)=\mathsf A-\mathsf C$ reappears in
Section~\ref{sec:stationary_switching} as the zero-discount value of the adjoint
feedback gain: Theorem~\ref{thm:closure_gain_identity} shows
$B|_{r=0}=\Phi'(E^*)$. The residence-time term $\mathsf A$ is, there, exactly
the boundary leftover of the auxiliary backward equation, and $\mathcal C$ its
forward/adjoint dual.
\end{remark}

\subsection{A computable closure certificate}
\label{subsec:closure_certificate}

For $E\in[0,M_{\mathrm{st}}]$, define
\[
\Delta_\Phi(E):=\mathsf C(E)-\mathsf A(E)=-\Phi'(E).
\]
Then $\Delta_\Phi(E)>0$ is equivalent to $\Phi'(E)<0$.
A numerical check of
$\Delta_\Phi>0$ on a grid is evidence for strict monotonicity; a rigorous
certificate follows by enclosing the profile, quadratures, and interpolation
error with interval arithmetic, or by combining grid values with an a posteriori
bound on $\sup_{[E_j,E_{j+1}]}|\Phi''|$. The von Bertalanffy verification appears
in Section~\ref{sec:concrete}.

% ============================================================================
\section{Finite-horizon well-posedness}
\label{sec:wellposed}

Existence, stability, and compactness for structured transport equations with
nonlocal coefficients are available in measure-valued and Lagrangian frameworks
\cite{gwiazda2010structured,carrillo2012structured,
dull2021spaces}. Because the present model has a strictly
positive $C^1$ speed and bounded inflow, a direct characteristic argument
suffices. We work in the spatial-$BV$-compatible class
\[
\mathcal U_T^{\mathrm{wp}}(C_u)
:=
\Big\{
u\in L^\infty((0,T)\times\Om):
0\le u\le u_{\max}\ \text{a.e.},\
\operatorname*{ess\,sup}_{t\in(0,T)}\TV_\Om\big(u(t,\cdot)\big)\le C_u
\Big\},
\]
which contains the threshold controls and coincides with the admissible class of
Section~\ref{sec:control_existence}. Assume
\begin{equation}\label{eq:wp_data}
\phi\in BV(\Om)\cap L^\infty_+(\Om),
\qquad
p\in BV(0,T)\cap L^\infty_+(0,T),
\qquad
\chi\in W^{1,\infty}_+(\Om),
\end{equation}
together with \eqref{eq:H1}--\eqref{eq:H5} and, on every $[0,M]$,
\begin{equation}\label{eq:wp_coefficients}
\sup_{E\in[0,M]}
\Big(
\|\partial_Eg(E,\cdot)\|_{W^{1,\infty}(\Om)}
+\|\partial_E\mu(E,\cdot)\|_{L^\infty(\Om)}
+\|\partial_lg(E,\cdot)\|_{BV(\Om)}
+\|\mu(E,\cdot)\|_{BV(\Om)}
\Big)<\infty.
\end{equation}

\subsection{Weak formulation and population bound}
\label{subsec:weak_solution}

\begin{definition}[Weak solution]
\label{def:weak}
For $u\in\mathcal U_T^{\mathrm{wp}}(C_u)$, a nonnegative
$x\in C([0,T];L^1(\Om))\cap L^\infty((0,T)\times\Om)$ with
$E(t)=\int_\Om\chi x(t,\cdot)$ is a weak solution if, for all
$\psi\in C^1([0,T]\times\Om)$ with $\psi(T,\cdot)=0$ and $\psi(\cdot,l_m)=0$,
\[
\int_0^T\!\!\int_\Om
x\big[\partial_t\psi+g(E,l)\partial_l\psi-(\mu(E,l)+u)\psi\big]\dd l\dd t
+\int_\Om\phi\,\psi(0,\cdot)\dd l+\int_0^T p\,\psi(\cdot,l_0)\dd t=0.
\]
\end{definition}

The population balance \eqref{eq:abundance} gives $N'\le p-\mu_{\min}N$, hence
$N(t)\le C_T^N$ and $0\le E(t)\le\|\chi\|_{L^\infty(\Om)}C_T^N=:M_T$, uniformly
over $\mathcal U_T^{\mathrm{wp}}(C_u)$.

For the time-modulus estimates of Section~\ref{sec:control_existence} and for the
environmental-regularity bound below we record a Green identity for the
characteristic solution. It is not a consequence of
Definition~\ref{def:weak} alone, whose test functions vanish at $l_m$; it is
proved directly from the representation \eqref{eq:frozen_representation}.

\begin{lemma}[Outflow trace and Green identity]
\label{lem:green_trace}
Let $x$ be the (frozen or coupled) characteristic solution with data
\eqref{eq:wp_data}. Then the outflow flux trace
$\beta(t):=g(E(t),l_m)\,x(t,l_m^-)$ is well defined for a.e.\ $t$ and lies in
$L^\infty(0,T)$, with $\|\beta\|_{L^\infty}\le g_{\max}C_T^0$, $C_T^0$ the
$L^\infty$ state bound \eqref{eq:frozen_uniform_bounds}. Moreover, for every
$\psi\in W^{1,\infty}(\Om)$ and $0\le t\le t+h\le T$,
\begin{equation}\label{eq:green_time_identity}
\begin{aligned}
\int_\Om\big(x(t+h,\cdot)-x(t,\cdot)\big)\psi\dd l
={}&\int_t^{t+h}\!\!\int_\Om\Big(g(E,l)x\,\partial_l\psi-(\mu(E,l)+u)x\,\psi\Big)\dd l\dd s\\
&+\int_t^{t+h}p(s)\psi(l_0)\dd s
-\int_t^{t+h}g(E(s),l_m)x(s,l_m^-)\psi(l_m)\dd s.
\end{aligned}
\end{equation}
\end{lemma}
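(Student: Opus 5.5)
The plan is to work entirely from the explicit characteristic representation \eqref{eq:frozen_representation}, treating the coupled case as a frozen problem with the given continuous environment $t\mapsto E(t)\in[0,M_T]$. Let $X(s;t_0,l_0')$ denote the characteristic flow of $\dot X=g(E(s),X)$. This flow is $C^1$ in the initial point, and its speed satisfies $g\ge g_{\min}(M_T)>0$. Consequently every point $(t,l)$ lies on a unique characteristic, which traces back either to the initial line $\{0\}\times\Om$ or to the inflow boundary $(0,T)\times\{l_0\}$. Along a characteristic the flux variable $y=g(E,l)x$ solves $\dot y=-(\mu+u)y$. This holds because $\frac{d}{ds}(gx)=g\,\frac{d}{ds}x+x(\partial_Eg\,E'+g\,\partial_lg)$; at the level of $x$ the relation is $\dot x=-(\partial_lg+\mu+u)x$. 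From it I get the exponential representation
\[
x=\phi\cdot J\cdot e^{-\int(\mu+u)}\quad\text{or}\quad x=\frac{p(\tau)}{g(E(\tau),l_0)}\,\frac{g(E(\tau),l_0)}{g(E(t),l)}\,(\cdots)\,e^{-\int(\mu+u)},
\]
where $J$ is the Jacobian of the flow. This gives the $L^\infty$ bound $C_T^0$ directly.

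\textbf{Step 1 (trace).} The characteristic through $l_m$ at time $t$ has a backward foot $\tau(t)$ on the inflow line, or $l^0(t)$ on the initial line. Both depend monotonically and $C^1$-bi-Lipschitz on $t$. The one-sided limit $x(t,l_m^-)$ therefore exists for every $t$ at which the data composed with the foot map have one-sided limits. Since $\phi\in BV$ and $p\in BV$ have one-sided limits everywhere, and the exponential and Jacobian factors are continuous, the limit exists for a.e.\ $t$. At worst it fails on the countable jump set pulled back, which is null because the foot map is bi-Lipschitz. The bound $|\beta|\le g_{\max}(M_T)\,C_T^0$ is then immediate, and measurability follows by writing $\beta$ as a composition of measurable maps.

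\textbf{Step 2 (Green identity).} I first prove the identity for smooth data $\phi_n,p_n$ (satisfying the compatibility $g(E(0),l_0)\phi_n(l_0)=p_n(0)$ if needed) and smooth $u_n$ with the same bounds. For such data the representation yields a Lipschitz, piecewise $C^1$ solution satisfying the equation classically. I multiply by $\psi\in W^{1,\infty}(\Om)$, integrate over $(t,t+h)\times\Om$, and integrate by parts in $l$. The boundary terms are $g x\psi$ at $l_0$, which equals $p\psi(l_0)$, and at $l_m$, which equals $\beta\psi(l_m)$. This gives \eqref{eq:green_time_identity}. Alternatively, and more cleanly, I can avoid approximation altogether by changing variables to characteristic coordinates. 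On the image of the inflow region, write $l=X(s;\tau,l_0)$ with $dl=g\,J\,d\tau$, and on the initial region use the initial coordinate. In these coordinates $\int_\Om x(t)\psi\,dl$ becomes a sum of integrals over the foot variables of $y\cdot\psi(X)\cdot(\cdots)$. Differentiating in $t$ along characteristics then produces the interior terms $gx\partial_l\psi-(\mu+u)x\psi$, together with the moving-endpoint contributions: the foot of $l_0$ entering with flux $p$ and the foot of $l_m$ exiting with flux $\beta$. This gives the identity for all $t$ by absolute continuity in $t$.

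\textbf{Step 3 (passing to the limit), the main obstacle.} If I take the approximation route, I need $x_n\to x$ in $C([0,T];L^1)$ and $\beta_n\to\beta$ in $L^1(0,T)$. The latter is the delicate part. I would obtain it from the representation: $\beta_n(t)$ equals the data composed with the foot map times continuous factors. The foot maps are bi-Lipschitz uniformly in $n$, so the $L^1$ convergence of $p_n\to p$ and $\phi_n\to\phi$ (and of $u_n\to u$ in $L^1$, entering through the exponent) transfers to $\beta_n\to\beta$ in $L^1(0,T)$. A change of variables whose Jacobian is bounded above and below makes this transfer possible. The interior terms converge by dominated convergence, using $|x_n|\le C_T^0$. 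In the coupled case I apply all of this with the environment $E$ held fixed, which is legitimate since the identity is linear once $E$ is given.
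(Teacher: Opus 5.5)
Your foot-coordinate argument (the ``alternative'' in Step~2) is correct, and it reaches the Green identity by a different route from the paper. The paper's trace argument is essentially your Step~1: monotone foot maps and one-sided limits of $\phi,p\in BV$ at the feet. It then obtains \eqref{eq:green_time_identity} in one line, by multiplying \eqref{eq:state} by $\psi$ and integrating the flux by parts in $l$. You instead change variables to foot coordinates. Since $|\partial_\tau X(t;\tau,l_0)|=g(E(\tau),l_0)\exp\big(\int_\tau^t\partial_lg\big)$, the Jacobian cancels exactly, and $x\,\dd l=p(\tau)\exp\big(-\int_\tau^t(\mu+u)\big)\dd\tau$ on the boundary-fed part, with $x\,\dd l=\phi(\xi)\exp\big(-\int_0^t(\mu+u)\big)\dd\xi$ on the initial-fed part. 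Only $\psi\circ X$ and the survival factor are then differentiated in $t$, and the inflow and outflow terms appear as the moving endpoints of the foot intervals. This never differentiates $x$, so it covers exactly the case the paper later needs: frozen solutions with $E$ merely continuous in Lemma~\ref{lem:env_lipschitz}, where $x(t,\cdot)$ need not be $BV$ (Remark~\ref{rmk:bv_needs_lipschitz}). The paper's integration by parts up to $l_m$ is a formal computation that presupposes that regularity. The paper's version buys brevity; yours buys an actual justification.

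Three details need repair.

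First, $y=gx$ does not satisfy $\dot y=-(\mu+u)y$ when $E$ varies. Your own product rule gives $\dot y=-(\mu+u)y+\partial_Eg\,\dot E\,x$, and $\dot E$ need not exist for $E\in\mathcal B_{M_T}$. The quantity that undergoes pure survival loss is $x\,|\partial_\tau X|$ (resp.\ $x\,\partial_\xi X$), not the flux. Drop the flux formula and the factor $g(E(\tau),l_0)/g(E(t),l)$, and write the amplitude as in \eqref{eq:frozen_representation}.

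Second, the approximation route fails as written in the frozen case. With $E$ only continuous, $t\mapsto g(E(t),l)$ and the boundary factor $1/g(E(\tau),l_0)$ are only continuous. Smoothing $\phi,p,u$ therefore does not produce a Lipschitz solution satisfying the equation classically. You would also have to regularize $E$ and control the limit in the flow, which the foot-coordinate computation makes unnecessary. Make that computation the proof and drop Step~3.

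Third, the $u$-part of the exponent is not continuous in $l$: the computation of Remark~\ref{rmk:costate_discontinuity}, run along forward characteristics, produces a jump. The limit $x(t,l_m^-)$ still exists, because $u(s,\cdot)$ has one-sided limits for a.e.\ $s$ and dominated convergence applies, as in the paper. For a.e.\ $t$ this limit agrees with the value along the exit characteristic that your moving-endpoint term produces, and that is what identifies that term with $\beta(t)\psi(l_m)$.
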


\subsection{Frozen-environment problem}
\label{subsec:frozen_environment}

Let $\mathcal B_{M_T}:=\{E\in C([0,T]):0\le E\le M_T\}$ and, for $L_E>0$, the
closed Lipschitz subclass
\[
\mathcal B_{M_T}^{L_E}
:=\big\{E\in\mathcal B_{M_T}:|E(t)-E(s)|\le L_E|t-s|\ \forall\, s,t\big\},
\]
which is convex and closed in $C([0,T])$ (a uniform limit of $L_E$-Lipschitz
functions is $L_E$-Lipschitz). For $E\in\mathcal B_{M_T}$ let
$\eta_E(s;t,l)$ solve
$\frac{\dd}{\dd s}\eta_E=g(E(s),\eta_E)$, $\eta_E(t;t,l)=l$. Since $g\ge g_{\min}$,
each backward characteristic meets the initial line $s=0$ or the inflow $l=l_0$
(entrance time $\tau_E(t,l)$). With
\[
\mathcal A_E(a,t;l)
:=
\exp\!\Big(
-\!\int_a^t\!\big[\partial_lg+\mu+u\big](E(s),\eta_E(s;t,l))\dd s
\Big),
\]
the representation is
\begin{equation}\label{eq:frozen_representation}
x_E(t,l)
=
\begin{cases}
\phi\big(\eta_E(0;t,l)\big)\mathcal A_E(0,t;l),
& \eta_E(0;t,l)>l_0,\\[2mm]
\dfrac{p(\tau_E(t,l))}{g(E(\tau_E(t,l)),l_0)}\,\mathcal A_E(\tau_E(t,l),t;l),
& \eta_E(0;t,l)\le l_0.
\end{cases}
\end{equation}

\begin{lemma}[Frozen-environment solution: existence and amplitude bounds]
\label{lem:frozen}
For every $E\in\mathcal B_{M_T}$, $u\in\mathcal U_T^{\mathrm{wp}}(C_u)$, the
frozen problem has a unique nonnegative weak solution
$x_E\in C([0,T];L^1(\Om))\cap L^\infty((0,T)\times\Om)$ with
\begin{equation}\label{eq:frozen_uniform_bounds}
\sup_{t\in[0,T]}
\big(\|x_E(t,\cdot)\|_{L^1}+\|x_E(t,\cdot)\|_{L^\infty}\big)
+\|g(E,l_m)x_E(\cdot,l_m^-)\|_{L^\infty(0,T)}
\le C_T^0,
\end{equation}
$C_T^0$ independent of $E\in\mathcal B_{M_T}$ (in particular, independent of any
modulus of continuity of $E$).
\end{lemma}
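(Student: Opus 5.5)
We prove Lemma~\ref{lem:frozen} by the method of characteristics: define $x_E$ by \eqref{eq:frozen_representation}, show that it is a weak solution with the stated bounds, and then prove uniqueness by a duality argument.

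\textbf{Step 1 (characteristic flow).} With $E\in\mathcal B_{M_T}$ frozen, the coefficient $s\mapsto g(E(s),\cdot)$ is continuous in $s$ and $C^1$ in $l$, uniformly on $[0,M_T]\times\Om$ by \eqref{eq:H1}, \eqref{eq:H2} and \eqref{eq:H5}. Carathéodory/Cauchy--Lipschitz theory therefore gives a unique flow $\eta_E(s;t,l)$. Moreover $l\mapsto\eta_E(s;t,l)$ is a $C^1$ diffeomorphism onto its image, with Jacobian
\[
\partial_l\eta_E(s;t,l)=\exp\Big(-\int_s^t\partial_lg(E(\sigma),\eta_E(\sigma;t,l))\dd\sigma\Big).
\]
Since $g\ge g_{\min}(M_T)>0$, the backward characteristic from $(t,l)$ reaches $l_0$ within time $(l_m-l_0)/g_{\min}$. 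This makes the entrance time $\tau_E$ well defined, and $\tau_E$ is continuous and strictly monotone along the inflow region.

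\textbf{Step 2 (bounds).} The quantity $|\partial_lg+\mu+u|$ is bounded by a constant $K$ that depends only on $M_T$ and $u_{\max}$, via \eqref{eq:H5} and $0\le u\le u_{\max}$. Hence $\mathcal A_E\le e^{KT}$, and \eqref{eq:frozen_representation} gives
\[
\|x_E\|_{L^\infty}\le e^{KT}\max\big(\|\phi\|_\infty,\|p\|_\infty/g_{\min}(M_T)\big).
\]
The $L^1$ bound follows since $\Om$ is bounded, or alternatively from the balance \eqref{eq:abundance}. At $l=l_m^-$ the representation is still defined by backward tracing, so the outflow trace is bounded by $g_{\max}(M_T)\|x_E\|_\infty$. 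None of these constants uses any modulus of continuity of $E$, only $0\le E\le M_T$. This gives \eqref{eq:frozen_uniform_bounds}.

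\textbf{Step 3 (weak formulation and continuity).} Take a test function $\psi$ as in Definition~\ref{def:weak}. Change variables to Lagrangian coordinates, parametrizing the two regions by $(s,l_1)=(0,\eta_E(0;t,l))$ and by the inflow time $\tau$, using the Jacobian from Step 1. Along each characteristic, $g\,x\,\partial_l\eta$-type quantities satisfy the ODE
\[
\frac{\dd}{\dd s}\big(x\,\partial_l\eta\big)=-(\mu+u)\,x\,\partial_l\eta ,
\]
and $\frac{\dd}{\dd s}\psi(s,\eta)=\partial_t\psi+g\,\partial_l\psi$. Integrating by parts in $s$ along each characteristic produces exactly the terms $\int_\Om\phi\,\psi(0)$ and $\int_0^T p\,\psi(\cdot,l_0)$. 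For the inflow term, note that the flux datum contributes $p(\tau)\,\dd\tau$ because $x(\tau,l_0)\,g\,\dd\tau$ is the measure of entering mass.

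Continuity in $C([0,T];L^1)$ is obtained by first approximating $\phi$, $p$ and $u$ by smooth data (with matching compatibility ignored, since the $L^1$ discontinuity across the critical characteristic is harmless). The solution map is $L^1$-Lipschitz in the data with constant $e^{KT}$, and for smooth data continuity is clear, so it passes to the limit.

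\textbf{Step 4 (uniqueness) — the main obstacle.} Uniqueness is the delicate point, because $u$ is only $L^\infty$ and weak solutions need not be characteristic ones a priori. Take the difference $w$ of two weak solutions; it satisfies the linear problem with zero data. For any $f\in C_c^\infty((0,T)\times(l_0,l_m))$, solve the backward dual problem
\[
\partial_t\psi+g\,\partial_l\psi-(\mu+u)\psi=f,\qquad \psi(T)=0,\qquad \psi(\cdot,l_m)=0,
\]
by characteristics. Since $u$ is nonsmooth, $\psi$ is only Lipschitz along characteristics, so I would first mollify $u$ (and $\mu$ in time) to get $u_\varepsilon$ and a $C^1$ dual solution $\psi_\varepsilon$. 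Testing the equation for $w$ with $\psi_\varepsilon$ gives
\[
\iint w f=\iint w\,(u_\varepsilon-u)\,\psi_\varepsilon .
\]
Here $\psi_\varepsilon$ is bounded uniformly in $\varepsilon$, $w\in L^\infty$, and $u_\varepsilon\to u$ in $L^1$, so the right-hand side tends to $0$. Hence $\iint wf=0$ for all such $f$, so $w=0$. The care needed here is ensuring that the dual solution is admissible (vanishing at $l_m$, $C^1$) and that its bounds are uniform in $\varepsilon$.

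Nonnegativity of $x_E$ is immediate from the representation.
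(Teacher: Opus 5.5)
Your proposal is correct in substance and follows the paper's route for existence, nonnegativity and the amplitude bounds. Both use the representation \eqref{eq:frozen_representation} with an attenuation bound that uses only $0\le E\le M_T$, so $C_T^0$ is free of any modulus of $E$. The paper gets the slightly sharper factor $e^{T\|\partial_lg\|_\infty}$ by discarding $\mu+u\ge0$ in the exponent, but your $e^{KT}$ is equally uniform.

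Where you genuinely differ is uniqueness. The paper only asserts that it ``follows from the representation''. You instead prove, by a Holmgren-type duality argument, that every weak solution in the sense of Definition~\ref{def:weak} is the characteristic one, which is a real gain in rigor. The argument is sound, and two remarks make it cleaner:
\begin{itemize}
\item No time-mollification of $\mu$ is needed. The flow of a field continuous in $t$ and $C^1$ in $l$ is $C^1$ in all variables. Moreover $\psi_\varepsilon$ vanishes near $l=l_m$ and near $t=T$ because $f$ has compact support. So $\psi_\varepsilon$ is an admissible $C^1$ test function with only $u$ regularized.
\item It matters that only the zeroth-order coefficient is perturbed. $\partial_l\psi_\varepsilon$ involves $\partial_lu_\varepsilon$ and is not bounded uniformly in $\varepsilon$. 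Regularizing the speed $g(E(t),\cdot)$ would therefore leave an uncontrolled error $\iint w\,(g-g_\varepsilon)\,\partial_l\psi_\varepsilon$. Your error $\iint w\,(u-u_\varepsilon)\,\psi_\varepsilon$ needs only $\|\psi_\varepsilon\|_\infty\le T\|f\|_\infty$, which holds because $\mu+u_\varepsilon\ge0$.
\end{itemize}

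The step that falls short is the outflow trace. The lemma bounds the one-sided limit $x_E(t,l_m^-)$. Evaluating the representation on the characteristic through $(t,l_m)$ does not show that this limit exists. Your argument never uses the $BV$ hypotheses, and without them the limit can fail to exist. For example, take $g\equiv1$, $\mu\equiv0$ and $u(s,y)=h(y-s)$. Then the boundary-fed attenuation exponent is $(l-l_0)\,h(l-t)$, which inherits any lack of one-sided limits of $h$.

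The paper closes this in Lemma~\ref{lem:green_trace}:
\begin{itemize}
\item the footpoint and entrance-time maps are monotone in $l$, hence converge as $l\uparrow l_m$;
\item $\phi,p\in BV$ have one-sided limits at those limiting points;
\item $u(s,\cdot)$ has one-sided limits because $\TV_\Om u(s,\cdot)\le C_u$;
\item dominated convergence handles the attenuation integral.
\end{itemize}
Once the limit is known to exist this way, your bound $g_{\max}\|x_E\|_\infty$ is exactly the paper's.
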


\begin{remark}[Why a $BV$ bound needs more than $E\in C$]
\label{rmk:bv_needs_lipschitz}
The boundary-fed branch of \eqref{eq:frozen_representation} contains the factor
$p(\tau_E(t,l))/g(E(\tau_E(t,l)),l_0)$. With $p\in BV(0,T)$ but $E$ merely
continuous, the map $t\mapsto1/g(E(t),l_0)$ can have infinite total variation, so
its composition with the (monotone) entrance-time map need not be of bounded
spatial variation. A spatial-$BV$ bound for $x_E$ therefore cannot hold uniformly
over $\mathcal B_{M_T}$; it requires the environmental path itself to be of
bounded temporal variation. We obtain this through a uniform Lipschitz bound on
the coupled environment, derived next, and then restrict the fixed point
to $\mathcal B_{M_T}^{L_E}$.
\end{remark}

\begin{lemma}[Environmental regularity: uniform Lipschitz bound]
\label{lem:env_lipschitz}
Assume $\chi\in W^{1,\infty}_+(\Om)$. For every $E\in\mathcal B_{M_T}$ the map
$t\mapsto(\mathcal FE)(t):=\int_\Om\chi(l)x_E(t,l)\dd l$ is Lipschitz, with
\begin{equation}\label{eq:E_prime_identity}
(\mathcal FE)'(t)
=\chi(l_0)p(t)
-\chi(l_m)g(E(t),l_m)x_E(t,l_m^-)
+\int_\Om g(E,l)x_E\,\chi'(l)\dd l
-\int_\Om\chi(l)\big(\mu(E,l)+u\big)x_E\dd l
\end{equation}
for a.e.\ $t$, and the explicit bound
\begin{equation}\label{eq:E_prime_bound}
\operatorname*{ess\,sup}_{t\in(0,T)}|(\mathcal FE)'(t)|
\le L_E^\star
:=\|\chi\|_\infty\|p\|_\infty
+\|\chi\|_\infty g_{\max}C_T^0
+\big(g_{\max}\|\chi'\|_\infty+\|\chi\|_\infty(\mu_{\max,T}+u_{\max})\big)C_T^0,
\end{equation}
with $C_T^0$ from \eqref{eq:frozen_uniform_bounds} and
$\mu_{\max,T}:=\sup_{(E,l)\in[0,M_T]\times\Om}\mu(E,l)<\infty$ (finite by
\eqref{eq:H1} on the compact set). The constant $L_E^\star$ is
independent of $E\in\mathcal B_{M_T}$. In particular $\mathcal F$ maps
$\mathcal B_{M_T}$ into $\mathcal B_{M_T}^{L_E^\star}$.
\end{lemma}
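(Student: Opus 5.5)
The plan is to apply the Green identity of Lemma~\ref{lem:green_trace} to the frozen characteristic solution $x_E$ with the time-independent test function $\psi=\chi\in W^{1,\infty}(\Om)$. For $0\le t\le t+h\le T$ this gives
\[
(\mathcal FE)(t+h)-(\mathcal FE)(t)
=\int_t^{t+h}\Big[\chi(l_0)p(s)-\chi(l_m)\beta(s)
+\int_\Om g(E(s),l)x_E\chi'\dd l-\int_\Om\chi(\mu(E(s),l)+u)x_E\dd l\Big]\dd s,
\]
where $\beta(s)=g(E(s),l_m)x_E(s,l_m^-)$. Hence $\mathcal FE$ is an indefinite integral of the bracketed function. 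Once we know that function lies in $L^\infty(0,T)$, $\mathcal FE$ is Lipschitz (absolutely continuous). Lebesgue's differentiation theorem then yields \eqref{eq:E_prime_identity} at a.e.\ $t$, namely at every Lebesgue point of the integrand.

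Next we bound each term using Lemma~\ref{lem:frozen}, whose constant $C_T^0$ does not depend on $E\in\mathcal B_{M_T}$. The terms are handled as follows:
\begin{itemize}
\item The inflow term satisfies $|\chi(l_0)p(s)|\le\|\chi\|_\infty\|p\|_\infty$.
\item The outflow term is controlled by \eqref{eq:frozen_uniform_bounds}, which bounds the trace flux: $|\chi(l_m)\beta(s)|\le\|\chi\|_\infty\|\beta\|_{L^\infty}$. This stays within the stated $\|\chi\|_\infty g_{\max}C_T^0$ term, since Lemma~\ref{lem:green_trace} gives $\|\beta\|_{L^\infty}\le g_{\max}C_T^0$.
\item The transport term satisfies $\int_\Om g\,x_E|\chi'|\le g_{\max}\|\chi'\|_\infty\|x_E(s)\|_{L^1}\le g_{\max}\|\chi'\|_\infty C_T^0$. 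Here $g_{\max}=g_{\max}(M_T)$ from \eqref{eq:H2}, applicable because $E(s)\in[0,M_T]$.
\item The loss term is at most $\|\chi\|_\infty(\mu_{\max,T}+u_{\max})C_T^0$. Here $\mu_{\max,T}<\infty$ because $\mu$ is continuous on the compact set $[0,M_T]\times\Om$.
\end{itemize}
Summing gives $L_E^\star$, and none of these constants depends on $E$.

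For the final inclusion $\mathcal F(\mathcal B_{M_T})\subseteq\mathcal B_{M_T}^{L_E^\star}$, we need continuity, the Lipschitz bound, and the range $0\le\mathcal FE\le M_T$. Continuity and the Lipschitz bound follow from the above. Nonnegativity follows from $\chi\ge0$ and $x_E\ge0$. For the upper bound, apply the Green identity with $\psi\equiv1$. This reproduces the abundance balance \eqref{eq:abundance} for $x_E$: the outflow and mortality terms are nonnegative losses, so $N_E'\le p-\mu_{\min}N_E$. The same Gronwall bound then gives $N_E\le C_T^N$, hence $\mathcal FE\le\|\chi\|_\infty C_T^N=M_T$.

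The main obstacle is justifying the Green identity with a test function that does not vanish at $l_m$. The identity, including the a.e.\ existence of the outflow trace and its $L^\infty$ bound, is supplied by Lemma~\ref{lem:green_trace}, which I take as given. If one had to verify it directly, I would do so through the characteristic representation \eqref{eq:frozen_representation}: integrate along characteristics for smooth $\psi$, then pass to $W^{1,\infty}$ by density.
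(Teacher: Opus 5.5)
Your proposal is correct and follows essentially the same route as the paper: apply the Green identity of Lemma~\ref{lem:green_trace} with $\psi=\chi$, bound the inflow, outflow, transport and loss terms by the $E$-independent constants of Lemma~\ref{lem:frozen}, and read off \eqref{eq:E_prime_identity} at Lebesgue points. Your explicit check that $0\le\mathcal FE\le M_T$ (Green identity with $\psi\equiv1$ plus the abundance Gronwall bound) fills in a step the paper leaves implicit for the final inclusion $\mathcal F(\mathcal B_{M_T})\subseteq\mathcal B_{M_T}^{L_E^\star}$.
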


We henceforth fix $L_E:=L_E^\star$ and work in $\mathcal B_{M_T}^{L_E}$.

\begin{lemma}[Frozen-environment spatial $BV$ bound under a Lipschitz environment]
\label{lem:frozen_bv}
For every $E\in\mathcal B_{M_T}^{L_E}$ and $u\in\mathcal U_T^{\mathrm{wp}}(C_u)$,
\begin{equation}\label{eq:frozen_bv_bound}
\sup_{t\in[0,T]}\TV_\Om\big(x_E(t,\cdot)\big)\le C_T,
\end{equation}
with $C_T$ depending on the data, the coefficient bounds \eqref{eq:wp_coefficients},
$C_u$, and $L_E$, but not otherwise on $E$.
\end{lemma}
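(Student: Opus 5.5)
The plan is to work directly from the characteristic representation \eqref{eq:frozen_representation}. The rectangle $[0,T]\times\Om$ splits along the critical characteristic $l=\eta_E(t;0,l_0)$ into two regions. The initial-datum-fed region is $\{\eta_E(0;t,l)>l_0\}$ and the boundary-fed region is its complement. For fixed $t$ I bound $\TV_\Om(x_E(t,\cdot))$ on each of the two size intervals separately. The interface contributes one jump, of size at most $2C_T^0$ by Lemma~\ref{lem:frozen}. On each piece $x_E(t,\cdot)$ is a product of a data factor and the amplitude $\mathcal A_E$, both bounded. I use the product rule $\TV(fh)\le\|f\|_\infty\TV(h)+\|h\|_\infty\TV(f)$, so it suffices to bound the variation of each factor as a function of $l$.

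\textbf{Characteristic maps.} Since $g$ is $C^1$ with $\partial_lg$ bounded on $[0,M_T]\times\Om$, the flow map $l\mapsto\eta_E(s;t,l)$ is a monotone bi-Lipschitz homeomorphism. Gronwall gives the bound $e^{\pm T\|\partial_lg\|_\infty}$ on its Lipschitz constants. The datum factor is $\phi(\eta_E(0;t,\cdot))$. Composing with a monotone homeomorphism preserves total variation, so its variation is at most $\TV(\phi)$.

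\textbf{Amplitude.} The factor $\mathcal A_E$ has the form $\exp(-\int_a^t F(s,\eta_E(s;t,l))\dd s)$ with $F=\partial_lg+\mu+u$. For each $s$, the map $l\mapsto F(s,\eta_E(s;t,l))$ has variation at most $\TV(\partial_lg(E(s),\cdot))+\TV(\mu(E(s),\cdot))+C_u$. This is bounded by \eqref{eq:wp_coefficients} and the definition of $\mathcal U^{\mathrm{wp}}_T(C_u)$, again because composition with a monotone map preserves variation. Integrating in $s$ over an interval of length at most $T$ gives a bound on the variation of the exponent. Since the exponent is bounded, this also bounds the variation of $\mathcal A_E$.

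In the boundary-fed branch the lower limit $a=\tau_E(t,l)$ also depends on $l$. However, $\tau_E(t,\cdot)$ is monotone, and the integrand is bounded by $\|F\|_\infty$. The resulting contribution is therefore at most $\|F\|_\infty$ times the variation of $\tau_E(t,\cdot)$, which is at most $T$ by monotonicity.

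\textbf{Boundary factor (main obstacle).} The remaining factor is
\[
h(l)=\frac{p(\tau_E(t,l))}{g(E(\tau_E(t,l)),l_0)},
\]
and this is where Remark~\ref{rmk:bv_needs_lipschitz} shows the Lipschitz class is needed. I write $h=k\circ\tau_E(t,\cdot)$ with $k(s)=p(s)/g(E(s),l_0)$. Because $\tau_E(t,\cdot)$ is monotone (decreasing in $l$, with values in $[0,t]$), it suffices to bound $\TV_{(0,T)}(k)$. By the product rule,
\[
\TV(k)\le\frac{\TV(p)}{g_{\min}}+\|p\|_\infty\,\TV\!\big(1/g(E(\cdot),l_0)\big).
\]
The map $s\mapsto1/g(E(s),l_0)$ is Lipschitz with constant at most $\|\partial_Eg\|_\infty L_E/g_{\min}^2$. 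Its variation is therefore at most $T\|\partial_Eg\|_\infty L_E/g_{\min}^2$, and this is exactly where the dependence on $L_E$ enters.

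One point needs care. Composition with a monotone, possibly non-surjective map can only decrease total variation, provided $p$ is taken in a good representative. I pick the representative of $p$ whose pointwise variation equals its essential variation. I then check that \eqref{eq:frozen_representation} holds a.e. in $l$ for every $t$, using the bi-Lipschitz property of $\tau_E(t,\cdot)$ on the boundary-fed interval.

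Summing the interface jump and the bounds on both branches gives a constant $C_T$. It depends only on $\TV(\phi)$, $\TV(p)$, $\|p\|_\infty$, $\|\phi\|_\infty$, the bounds in \eqref{eq:wp_coefficients}, $g_{\min},g_{\max}$ on $[0,M_T]$, $C_u$, $T$ and $L_E$, and is uniform in $t$ and in $E\in\mathcal B_{M_T}^{L_E}$. This proves \eqref{eq:frozen_bv_bound}.
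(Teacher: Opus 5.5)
Your proposal is correct and follows the paper's proof essentially step for step. It uses the same split into initial-fed and boundary-fed branches with a single bounded interface jump, the same bounds for compositions with the monotone characteristic and entrance-time maps, the same moving-lower-limit estimate $\|F\|_\infty\TV(\tau_E(t,\cdot))\le\|F\|_\infty T$, and the same Lipschitz bound on $s\mapsto 1/g(E(s),l_0)$ as the only place where $L_E$ enters. Your extra care about choosing a good representative of $p$ and about the bi-Lipschitz property of $\tau_E(t,\cdot)$ is a refinement that the paper leaves implicit.
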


\subsection{Stability in the environmental trajectory}
\label{subsec:environmental_stability}

\begin{lemma}[Short-time environmental stability]
\label{lem:short_stability}
Let $E_1,E_2\in\mathcal B_{M_T}^{L_E}$ with frozen solutions $x_{E_1},x_{E_2}$ for
the same data and control. There is $C_T>0$ independent of $E_1,E_2$ with, for
every $\tau\in(0,T]$,
\begin{equation}\label{eq:short_stability}
\sup_{t\in[0,\tau]}\|x_{E_1}(t,\cdot)-x_{E_2}(t,\cdot)\|_{L^1(\Om)}
\le C_T\tau\|E_1-E_2\|_{C([0,\tau])}.
\end{equation}
\end{lemma}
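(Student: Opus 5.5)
We prove the estimate by a duality argument on the difference equation, rather than by comparing the characteristic representations \eqref{eq:frozen_representation} directly. Comparing characteristics would mean tracking how the flow maps $\eta_{E_1},\eta_{E_2}$ and the entrance times $\tau_{E_i}$ separate. That is feasible, but it needs the spatial $BV$ bound to control the mismatch in the initial and boundary values, so the duality route is cleaner.

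Set $w:=x_{E_1}-x_{E_2}$. Since both solutions have the same data $\phi$, $p$ and the same control $u$, $w$ solves, in the weak sense of Definition~\ref{def:weak},
\[
\partial_t w+\partial_l\big(g(E_1,l)w\big)+\big(\mu(E_1,l)+u\big)w
=-\partial_l\big((g(E_1,l)-g(E_2,l))x_{E_2}\big)-\big(\mu(E_1,l)-\mu(E_2,l)\big)x_{E_2}=:R,
\]
with $w(0,\cdot)=0$.

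The boundary term is the first thing to check. The inflow flux data coincide, $g(E_i,l_0)x_{E_i}(t,l_0)=p$, so the total flux $g(E_1)w+(g(E_1)-g(E_2))x_{E_2}$ vanishes at $l_0$. The source $R$ is therefore exactly in divergence form with a homogeneous total-flux condition, and no boundary remainder appears.

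The main step is an $L^1$ estimate for this linear problem with frozen coefficients $E_1$. We test against $\psi=\sgn(w)$, or more rigorously against a smooth approximation $\mathrm{sgn}_\varepsilon(w)$ combined with the Kato inequality for transport. Alternatively, we use the backward dual problem $\partial_s\varphi+g(E_1)\partial_l\varphi-(\mu+u)\varphi=0$ with $|\varphi|\le 1$ and terminal datum $\sgn w(t)$.

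In the dual form the zeroth-order part of $R$ contributes at most
\[
\int_0^t\|\partial_E\mu\|_\infty |E_1-E_2|\,\|x_{E_2}\|_{L^1}\,ds .
\]
The divergence part contributes $\int_0^t\!\int (g(E_1)-g(E_2))x_{E_2}\,\partial_l\varphi$. This is the main obstacle: $\partial_l\varphi$ is not bounded when the terminal datum is a sign function.

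To get around it, we do not integrate the divergence by parts onto $\varphi$. Instead we expand
\[
\partial_l\big((g(E_1)-g(E_2))x_{E_2}\big)=\partial_l(g(E_1)-g(E_2))\,x_{E_2}+(g(E_1)-g(E_2))\,\partial_lx_{E_2}.
\]
By the mean value theorem and \eqref{eq:wp_coefficients}, $\|g(E_1(s),\cdot)-g(E_2(s),\cdot)\|_{W^{1,\infty}}\le C|E_1(s)-E_2(s)|$. The second piece is a measure whose mass is bounded by $C|E_1-E_2|\,\TV(x_{E_2}(s))$, and Lemma~\ref{lem:frozen_bv} bounds this uniformly because $E_2\in\mathcal B_{M_T}^{L_E}$. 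This is precisely where the Lipschitz subclass is needed. Hence $\|R(s)\|_{\mathcal M(\Om)}\le C(C_T^0+C_T)|E_1(s)-E_2(s)|$.

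The $L^1$ contraction of the linear transport semigroup, with loss coefficient $\mu+u\ge0$ and outflow at $l_m$ only reducing mass (the outflow sign is controlled by Lemma~\ref{lem:green_trace}), then gives
\[
\|w(t)\|_{L^1}\le\int_0^t\|R(s)\|_{\mathcal M}\,ds\le C_T\,t\,\|E_1-E_2\|_{C([0,t])}.
\]
Taking the supremum over $t\le\tau$ yields \eqref{eq:short_stability}. To handle $R$ being a measure rather than an $L^1$ function, we mollify $x_{E_2}$ in $l$, apply the argument, and pass to the limit using strict convergence of the total variation.

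The constant depends only on $C_T^0$, on $C_T$ from Lemma~\ref{lem:frozen_bv}, and on the coefficient bounds on $[0,M_T]$. It is therefore independent of $E_1$ and $E_2$.
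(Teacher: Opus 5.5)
Your argument is correct, but it takes a different route from the paper. The paper argues along characteristics. It bounds the separation of the $E_1$- and $E_2$-flows and entrance-time maps by $C_T\tau\|E_1-E_2\|_{C([0,\tau])}$. It then applies the translation estimate $\|f(\cdot+h)-f\|_{L^1}\le|h|\TV(f)$ to the factors of the representation \eqref{eq:frozen_representation}, using the uniform $BV$ bounds of Lemma~\ref{lem:frozen_bv}, which are available because $E_1,E_2\in\mathcal B_{M_T}^{L_E}$.

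You instead treat $w$ as a solution of the linear problem frozen at $E_1$. All $E$-dependence goes into a source of mass $O(|E_1(s)-E_2(s)|)$, using the $W^{1,\infty}$- and $L^\infty$-Lipschitz dependence of $g$ and $\mu$ on $E$ from \eqref{eq:wp_coefficients}. You then close with the $L^1$ non-expansiveness of conservative attenuated transport.

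Your route has a real advantage: $u$, $\phi$ and $p$ enter only through the common operator, so you never compare compositions along two different flows. There is no split of $u(s,\eta_{E_1})-u(s,\eta_{E_2})$, no shift of entrance times inside $p(\tau_E)/g(E(\tau_E),l_0)$, and no moving interface between initial-fed and boundary-fed regions. You also need a $BV$ bound only for $x_{E_2}$. Contrary to your opening remark, though, it does not avoid $BV$: the bound on $\TV(x_{E_2}(s,\cdot))$ is exactly what makes $(g(E_1)-g(E_2))D_lx_{E_2}$ a finite measure. The paper's route, in turn, needs no measure-valued source or Kato/mollification step. Its flow and entrance-time estimates are the same ones reused in Proposition~\ref{prop:continuous_dependence}.

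One detail to tidy: ``no boundary remainder appears'' holds only while the source is kept in divergence form. Once you expand it and run the $L^1$ estimate on $w$ with its own flux, $w$ carries a nonzero inflow flux,
\[
g(E_1,l_0)\,w(t,l_0^+)=-\big(g(E_1,l_0)-g(E_2,l_0)\big)\,x_{E_2}(t,l_0^+).
\]
Equivalently, the zero-extended $D_lx_{E_2}$ has an atom at $l_0$. This term is bounded by $C|E_1(t)-E_2(t)|\,C_T^0$, so it adds only another $O\big(t\|E_1-E_2\|_{C([0,t])}\big)$ contribution, but it must be counted explicitly.
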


\subsection{Nonlinear fixed point}
\label{subsec:nonlinear_fixed_point}

By Lemma~\ref{lem:env_lipschitz}, $\mathcal F$ maps $\mathcal B_{M_T}$ into the
closed convex Lipschitz class $\mathcal B_{M_T}^{L_E}$; in particular
$\mathcal F:\mathcal B_{M_T}^{L_E}\to\mathcal B_{M_T}^{L_E}$. On
$\mathcal B_{M_T}^{L_E}$ the frozen states obey the uniform spatial-$BV$ bound of
Lemma~\ref{lem:frozen_bv}, so Lemma~\ref{lem:short_stability} applies and gives
\[
\|\mathcal FE_1-\mathcal FE_2\|_{C([0,\tau])}
\le\|\chi\|_{L^\infty}C_T\tau\|E_1-E_2\|_{C([0,\tau])},
\]
a contraction on $\mathcal B_{M_T}^{L_E}$ for small $\tau$.

\begin{theorem}[Finite-horizon well-posedness]
\label{thm:wellposed}
Assume \eqref{eq:H1}--\eqref{eq:H5}, \eqref{eq:wp_data} (in particular
$\chi\in W^{1,\infty}_+(\Om)$), and \eqref{eq:wp_coefficients}. For every
$u\in\mathcal U_T^{\mathrm{wp}}(C_u)$, problem
\eqref{eq:state}--\eqref{eq:state_data} has a unique nonnegative weak solution
$x^u\in C([0,T];L^1(\Om))\cap L^\infty((0,T)\times\Om)$ whose environment
$E^u(t)=\int_\Om\chi x^u(t,\cdot)$ is $L_E$-Lipschitz, with
\[
\sup_{t\in[0,T]}
\big(\|x^u(t,\cdot)\|_{L^1}+\|x^u(t,\cdot)\|_{L^\infty}+\TV_\Om(x^u(t,\cdot))\big)
\le C_T,
\]
$C_T$ depending on $T$, data, coefficient bounds, and $C_u$, not on the control.
\end{theorem}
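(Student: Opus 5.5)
The plan is a Banach fixed point in the environmental trajectory, carried out on successive short time windows and then concatenated up to $T$. Fix $u\in\mathcal U_T^{\mathrm{wp}}(C_u)$ and let $\mathcal F$ act on $\mathcal B_{M_T}^{L_E}$ by $E\mapsto\int_\Om\chi x_E(t,\cdot)\dd l$, where $x_E$ is the frozen solution from Lemma~\ref{lem:frozen}.

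\textbf{Step 1: $\mathcal F$ is a self-map.} Lemma~\ref{lem:env_lipschitz} shows that $\mathcal FE$ is $L_E^\star$-Lipschitz. For the range bound, integrate the frozen equation to get $\frac{\dd}{\dd t}\int_\Om x_E\le p-\mu_{\min}\int_\Om x_E$. This gives $\int_\Om x_E(t,\cdot)\le C_T^N$, hence $0\le\mathcal FE\le\|\chi\|_\infty C_T^N=M_T$. So $\mathcal F$ maps $\mathcal B_{M_T}^{L_E}$ into itself. This set is a complete metric space, being closed in $C([0,T])$.

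\textbf{Step 2: contraction on a short window.} Lemma~\ref{lem:short_stability} gives
\[
\|\mathcal FE_1-\mathcal FE_2\|_{C([0,\tau])}\le\|\chi\|_\infty C_T\tau\|E_1-E_2\|_{C([0,\tau])}.
\]
Choose $\tau_0$ with $\|\chi\|_\infty C_T\tau_0\le\frac12$. Then there is a unique fixed point on $[0,\tau_0]$.

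\textbf{Step 3: continuation to $[0,T]$.} The key point is that $C_T$, $M_T$, $L_E$ and the $L^\infty/BV$ bounds of Lemmas~\ref{lem:frozen}--\ref{lem:frozen_bv} are uniform. They depend only on the data, on $T$ and on $C_u$, not on the initial time of a window. So I restart at $t=\tau_0$ with datum $x(\tau_0,\cdot)\in BV\cap L^\infty_+$, whose norms are controlled by $C_T$, and repeat with the same step $\tau_0$. After finitely many steps, about $T/\tau_0$, the windows cover $[0,T]$. On each window the fixed-point environment extends the previous one continuously, and the Lipschitz constant is preserved across junctions.

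The delicate part is that the restarted problem has a new initial datum. The constants of Lemmas~\ref{lem:frozen}--\ref{lem:short_stability} must therefore be checked to depend on $\phi$ only through $\|\phi\|_{L^1\cap L^\infty}+\TV(\phi)$. Alternatively, I can avoid restarts altogether. Since Lemma~\ref{lem:short_stability} holds for every $\tau$, the estimate can be iterated Gronwall-style on $[0,T]$, giving $\|\mathcal F^kE_1-\mathcal F^kE_2\|\le\frac{(C\,T)^k}{k!}\|E_1-E_2\|$. Then some iterate $\mathcal F^k$ is a contraction on all of $[0,T]$, and this yields the global fixed point at once. I would try this weighted/iterated route first, because it sidesteps the restart bookkeeping. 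The iterate bound itself needs Lemma~\ref{lem:short_stability} in its Volterra form, $\|x_{E_1}(t)-x_{E_2}(t)\|_{L^1}\le C\int_0^t|E_1-E_2|$, which is what its proof via characteristics naturally produces.

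\textbf{Step 4: identification and uniqueness.} At the fixed point $E^u$, the state $x^u:=x_{E^u}$ solves the frozen problem with the environment equal to its own readout, so it is a weak solution in the sense of Definition~\ref{def:weak}. The bounds on $L^1$, $L^\infty$ and $\TV$ follow from Lemmas~\ref{lem:frozen} and~\ref{lem:frozen_bv} with $E=E^u\in\mathcal B_{M_T}^{L_E}$.

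For uniqueness, take any weak solution $\tilde x$ with environment $\tilde E$. By uniqueness in Lemma~\ref{lem:frozen}, $\tilde x=x_{\tilde E}$. By the population balance, $\tilde E\in\mathcal B_{M_T}$. By Lemma~\ref{lem:env_lipschitz}, $\tilde E=\mathcal F\tilde E$ is $L_E$-Lipschitz. Hence $\tilde E$ is a fixed point in $\mathcal B_{M_T}^{L_E}$, so $\tilde E=E^u$ by the uniqueness of the contraction fixed point.

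The main obstacle is this uniqueness step. It needs a weak solution to coincide with the characteristic solution, which is exactly the uniqueness assertion of Lemma~\ref{lem:frozen}, taken as given. It also needs Lipschitz regularity of $\tilde E$, which requires the Green identity of Lemma~\ref{lem:green_trace} to hold for $\tilde x$. Both are secured once $\tilde x=x_{\tilde E}$.
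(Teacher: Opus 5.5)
Your proposal is correct and follows the paper's proof: a contraction for $E\mapsto\int_\Om\chi x_E$ on the closed Lipschitz class $\mathcal B_{M_T}^{L_E}$ over a short window (self-map from Lemma~\ref{lem:env_lipschitz} plus the population bound, contraction from Lemma~\ref{lem:short_stability}), continued to $[0,T]$ with window-independent constants, with the $L^1$, $L^\infty$ and $\TV$ bounds read off from Lemmas~\ref{lem:frozen} and~\ref{lem:frozen_bv}. Your two additions make explicit what the paper compresses into ``the construction iterates over $[0,T]$'' and into its uniqueness claim: the $\mathcal F^k$ variant, which, as you rightly note, needs the Volterra form $C\int_0^t|E_1-E_2|\dd s$ of the stability estimate (the stated form $C\tau\|E_1-E_2\|_{C([0,\tau])}$ alone yields only $(CT)^k$, not $(CT)^k/k!$), and your check that any weak solution's environment is a fixed point of $\mathcal F$ on $\mathcal B_{M_T}$ and is therefore automatically $L_E$-Lipschitz.
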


\begin{proposition}[Continuous dependence]
\label{prop:continuous_dependence}
For data and controls $(\phi_i,p_i,u_i)$, $u_i\in\mathcal U_T^{\mathrm{wp}}(C_u)$,
\begin{equation}\label{eq:continuous_dependence}
\sup_{t}\|x_1-x_2\|_{L^1(\Om)}
\le
C_T\big(\|\phi_1-\phi_2\|_{L^1}+\|p_1-p_2\|_{L^1(0,T)}+\|u_1-u_2\|_{L^1}\big),
\end{equation}
with $C_T$ uniform under common $L^\infty$ and spatial-$BV$ bounds.
\end{proposition}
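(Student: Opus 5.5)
The plan is to compare the two coupled solutions $x_i=x^{u_i}$ given by Theorem~\ref{thm:wellposed}, with environments $E_i$, by splitting the difference into a frozen-environment part and an environmental part. Write
\[
x_1-x_2=\big(x_1-\tilde x\big)+\big(\tilde x-x_2\big),
\]
where $\tilde x$ denotes the frozen solution with environment $E_2$ but data and control $(\phi_1,p_1,u_1)$. The first bracket compares two frozen solutions with the same data and control but environments $E_1,E_2$. Both environments lie in $\mathcal B_{M_T}^{L_E}$ by Theorem~\ref{thm:wellposed}, so the argument behind Lemma~\ref{lem:short_stability} applies, in its differential form rather than the integrated short-time form. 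I expect it to give
\[
\|x_1(t)-\tilde x(t)\|_{L^1}\le C\int_0^t|E_1-E_2|(s)\dd s .
\]
The $BV$ bound of Lemma~\ref{lem:frozen_bv} controls the term $\partial_l(\delta g\,x)$ produced by varying $g$.

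The second bracket is a \emph{linear} transport problem with the common frozen coefficients $g(E_2,\cdot),\mu(E_2,\cdot)$. Its difference $w=\tilde x-x_2$ solves
\[
\partial_t w+\partial_l\big(g(E_2,l)w\big)=-(\mu+u_2)w-(u_1-u_2)\tilde x,
\]
with data $\phi_1-\phi_2$ and inflow flux $p_1-p_2$. I will use an $L^1$ contraction (Kato-type) estimate along characteristics. Integrate $\sgn(w)$ times the equation, or equivalently use the representation \eqref{eq:frozen_representation} together with the Jacobian of the characteristic flow. Since the flux at $l_0$ enters with unit weight and the outflow at $l_m$ only removes mass, this gives
\[
\|w(t)\|_{L^1}\le\|\phi_1-\phi_2\|_{L^1}+\|p_1-p_2\|_{L^1(0,t)}+\|\tilde x\|_{L^\infty}\|u_1-u_2\|_{L^1((0,t)\times\Om)} .
\]
The factor $\|\tilde x\|_{L^\infty}$ is bounded by the constant $C_T^0$ of Lemma~\ref{lem:frozen}.

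Combining the two pieces with $|E_1-E_2|(s)\le\|\chi\|_\infty\|x_1(s)-x_2(s)\|_{L^1}$ yields an integral inequality of the form
\[
D(t)\le K+C\int_0^tD(s)\dd s,
\]
where $D(t)=\|x_1(t)-x_2(t)\|_{L^1}$ and $K$ is the data/control discrepancy. Gronwall's inequality then gives \eqref{eq:continuous_dependence} with $C_T=\max(1,C_T^0)e^{CT}$. This constant depends only on the common $L^\infty$ and $\TV$ bounds, the coefficient bounds \eqref{eq:wp_coefficients}, and $L_E$.

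The main obstacle is the environmental step. Lemma~\ref{lem:short_stability} is stated only in the sup-over-$[0,\tau]$ form, which is too weak for Gronwall, so I need the pointwise-in-time version. To get it, I would linearize: $z=x_{E_1}-x_{E_2}$ solves a transport equation with coefficients at $E_1$ and source
\[
-\partial_l\big((g(E_1)-g(E_2))x_{E_2}\big)-(\mu(E_1)-\mu(E_2))x_{E_2} .
\]
Its $L^1$ norm is at most
\[
C|E_1-E_2|(s)\big(\|x_{E_2}\|_{L^\infty}+\TV(x_{E_2})\big)
\]
by \eqref{eq:wp_coefficients} and Lemma~\ref{lem:frozen_bv}. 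The inflow contributes the extra boundary term $p\,|1/g(E_1,l_0)-1/g(E_2,l_0)|$, which is of the same order. The same $L^1$ contraction estimate then finishes this step. Care is needed only in justifying the $\sgn$-multiplier for merely $BV$ solutions, which I would handle by mollifying the data and passing to the limit.
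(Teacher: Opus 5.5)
Your argument is correct, but it takes a different route from the paper's proof.

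\textbf{The paper's route.} The paper works in Lagrangian form. It subtracts the characteristic representations \eqref{eq:frozen_representation} of $x_1,x_2$ along their distinct flows $\eta_1,\eta_2$. It splits $u_1(s,\eta_1)-u_2(s,\eta_2)$ into two pieces:
\begin{itemize}
\item a translation of $u_1$, controlled by $\TV_\Om(u_1(s,\cdot))\le C_u$ times the flow gap $C_T\tau\|E_1-E_2\|_C$;
\item a difference along the common characteristic $\eta_2$, which requires two-sided bounds on the Jacobian $\partial_l\eta_2$.
\end{itemize}
The boundary factor is handled the same way. The proof then closes by absorbing a $C_T\tau\|x_1-x_2\|$ term on a short interval and iterating over $[0,T]$.

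\textbf{Your route.} You work in Eulerian form, and the intermediate frozen solution $\tilde x$ cleanly separates the environmental error from the data/control error.
\begin{itemize}
\item The data/control part is a linear conservative transport problem with common coefficients. The $L^1$ contraction therefore gives constant $1$ on $\phi_1-\phi_2$ and $p_1-p_2$, and $\|\tilde x\|_{L^\infty}$ on $u_1-u_2$. This needs no translation estimates, no Jacobian bounds, and no spatial variation of the controls at all.
\item The spatial-$BV$ hypothesis enters only through $\TV_\Om(\tilde x(s,\cdot))$, via Lemma~\ref{lem:frozen_bv}. That lemma applies because $E_2$ is Lipschitz with a constant controlled by the common data bounds. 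You need it to bound the total mass of the measure $\partial_l\big((g(E_1,\cdot)-g(E_2,\cdot))\tilde x\big)$, using the $W^{1,\infty}$ control of $\partial_Eg$ in \eqref{eq:wp_coefficients}.
\item The result is an integral inequality valid at every $t$, closed by a single Gronwall step on $[0,T]$.
\end{itemize}

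\textbf{Trade-off.} You must justify the $\sgn$-multiplier (or Duhamel) estimate for a transport equation with a measure-valued source and merely $BV$ solutions. Mollification or the characteristic representation handles this, but it is an extra step. The paper instead reuses the explicit representation formula it already has.

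One small correction to your motivation: the sup-in-time form of Lemma~\ref{lem:short_stability} is not actually too weak. Combined with short-time absorption and restarting, which is what the paper does, it suffices. Your pointwise-in-time version is simply a cleaner way to close.
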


\begin{remark}[Scope]
\label{rmk:wp_scope}
The spatial-$BV$ hypothesis controls compositions of $u$ and the data with nearby
flows; an $L^\infty$ bound alone controls amplitude but not sensitivity to
spatial translation. We therefore do not claim strong continuous dependence
uniformly over $\Uad$. Broader (Lagrangian and measure-valued)
formulations are possible \cite{gwiazda2010structured,yu2026beyond,carrillo2012structured} but are outside our scope.
\end{remark}

% ============================================================================
\section{Forced persistence and intrinsic replacement}
\label{sec:persistence}

The forced model has exogenous recruitment, so persistence under prescribed $p$
is not demographic self-sustainability. To diagnose intrinsic replacement we
introduce the endogenous renewal counterpart, used here only as a diagnostic; the
optimization remains the forced problem.

\subsection{Replacement functional and basic reproduction number}
\label{subsec:renewal}

Replace the inflow datum in \eqref{eq:state_data} by the size-specific fertility
renewal condition
\begin{equation}\label{eq:renewal_bc}
g(E(t),l_0)x(t,l_0)=\int_{l_0}^{l_m}m(l)x(t,l)\dd l,
\qquad m\in L^\infty_+(\Om).
\end{equation}
At a stationary environment $E$ with frozen profile normalized to unit recruitment
flux, $\hat x_E(l):=x_E(l)/p$, define the environment- and harvest-dependent
replacement functional
\[
\mathsf A(E;u):=\int_{l_0}^{l_m}m(l)\hat x_E(l)\dd l
=
\int_{l_0}^{l_m}\frac{m(l)}{g(E,l)}\exp\!\Big(-\!\int_{l_0}^l\frac{\mu(E,\xi)+u(\xi)}{g(E,\xi)}\dd\xi\Big)\dd l,
\]
the expected lifetime offspring of a recruit at environment $E$ under harvesting
$u$. The conventional basic reproduction number is the value at the
extinction (invasion) environment,
$\displaystyle \mathcal R_0(u):=\mathcal R(0;u)$,
the expected lifetime offspring in an empty population, as obtained from the
next-generation operator of structured-population theory
\cite{diekmann2003steady,cushing1994structured}; we reserve the symbol $\mathcal R_0$
for this invasion quantity and do not call $\mathcal R(E;u)$ a reproduction number.

\begin{proposition}[Stationary renewal state: generation balance and amplitude]
\label{prop:R0_monotone}
A nontrivial stationary renewal state has the form $x=b\,\hat x_E$ with $b>0$, and
exists at environment $E^{\dagger}>0$ if and only if both the generation-balance
condition and the environmental closure hold:
\begin{equation}\label{eq:renewal_equilibrium}
\mathcal R(E^{\dagger};u)=1
\qquad\text{and}\qquad
b=\frac{E^{\dagger}}{\displaystyle\int_{l_0}^{l_m}\chi(l)\hat x_{E^{\dagger}}(l)\dd l}.
\end{equation}
If $\chi>0$ on a set of positive measure, no positive stationary state exists at
$E=0$. Moreover, under \eqref{eq:H4}, increasing $u$ pointwise does not increase
$\mathcal R(\cdot;u)$; and if $\mathcal R(\cdot;u)$ is strictly decreasing in $E$
on $[0,M]$ with $\mathcal R(0;u)>1>\mathcal R(M;u)$, then $E^{\dagger}$ is unique.
\end{proposition}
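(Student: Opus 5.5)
The proof reduces the stationary renewal problem to the frozen-environment theory of Section~\ref{sec:stationary}. Given a nontrivial stationary solution $(x,E)$ of the transport equation with boundary condition \eqref{eq:renewal_bc}, set $b:=g(E,l_0)x(l_0)=\int_\Om m x\,\dd l$. With flux datum $b$, Proposition~\ref{prop:frozen_stationary_profile} gives uniqueness of the frozen stationary profile, and the profile is linear in the flux datum. Hence $x=b\,\hat x_E$, and $b>0$ because otherwise $x\equiv0$.

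Substituting $x=b\hat x_E$ into the renewal condition gives $b=b\,\mathcal R(E;u)$, so $\mathcal R(E;u)=1$. Substituting it into the definition of $E$ gives $E=b\int\chi\hat x_E$. Since $\hat x_E>0$ on $\Om$ and $\chi>0$ on a set of positive measure, the integral is positive, so $b$ is given by \eqref{eq:renewal_equilibrium}. Conversely, suppose $E^\dagger>0$ satisfies $\mathcal R(E^\dagger;u)=1$ and $b$ is defined by \eqref{eq:renewal_equilibrium}. Then $x:=b\hat x_{E^\dagger}$ solves the stationary ODE, its inflow flux is $b=b\mathcal R=\int m x$, and $\int\chi x=E^\dagger$. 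Both directions therefore follow. At $E=0$, any positive state would have $0=E=b\int\chi\hat x_0>0$, which is a contradiction.

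Monotonicity in $u$ is read off the explicit formula for $\mathcal R(E;u)$. If $u_1\le u_2$ pointwise, the exponent $-\int_{l_0}^l(\mu+u)/g$ is pointwise smaller for $u_2$. The prefactor $m/g\ge0$ does not depend on $u$, so the integrand and hence $\mathcal R$ do not increase. This step only needs $g>0$ from \eqref{eq:H2}, not \eqref{eq:H4}. Under \eqref{eq:H4} one can also remark that $\partial_E\log\hat x_E=\alpha-\int b$ as in \eqref{eq:dE_log_x}, but this is not needed here, since strict decrease in $E$ is an explicit hypothesis.

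For uniqueness, $\mathcal R(\cdot;u)$ is continuous on $[0,M]$ by the differentiability assumptions of Section~\ref{sec:stationary}. Strict monotonicity then gives at most one root of $\mathcal R=1$, and the strict inequalities $\mathcal R(0)>1>\mathcal R(M)$ give existence by the intermediate value theorem. The root is interior, so $E^\dagger>0$, and by the equivalence above the stationary state at that environment is determined by $b$.

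The only delicate point is the scope of the uniqueness claim: it holds within $[0,M]$, and with $b$ fixed by closure. Roots outside $[0,M]$ are excluded only if $M$ bounds all stationary environments, for instance via \eqref{eq:Phi_bound}-type estimates. I would state the conclusion as uniqueness in $(0,M]$, and add that with this $M$ it is global.
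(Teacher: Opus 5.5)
Your proposal is correct and follows essentially the same route as the paper: uniqueness and linearity of the frozen profile in the flux datum give $x=b\,\hat x_E$, and homogeneity of \eqref{eq:renewal_bc} reduces the renewal condition to $\mathcal R(E;u)=1$. The closure then fixes $b$, monotonicity in $u$ is read off the survival factor, and strict monotonicity plus the intermediate value theorem gives the root. Your remarks that \eqref{eq:H4} is not needed for the $u$-monotonicity, and that uniqueness is only asserted within $[0,M]$, are accurate refinements of the paper's terser argument.

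One caveat on your closing aside: an \eqref{eq:Phi_bound}-type estimate cannot bound renewal environments, because the renewal condition leaves the amplitude $b$ free. The stationary environments are therefore exactly the positive roots of $\mathcal R(\cdot;u)=1$, so extending uniqueness beyond $[0,M]$ requires something like $\mathcal R(E;u)<1$ for all $E\ge M$.
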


\subsection{Equilibrium-adjusted spawning-potential ratio}
\label{subsec:SPR}

For a reference unfished equilibrium environment $E_0$ (set $u\equiv0$) and a
fished equilibrium $E^*$, define the equilibrium-adjusted spawning-potential
ratio
\[
\mathrm{SPR}_{\mathrm{eq}}
:=
\frac{\displaystyle\int_{l_0}^{l_m}m(l)\hat x_{E^*}(l)\dd l}
{\displaystyle\int_{l_0}^{l_m}m(l)\hat x_{E_0}^{\,0}(l)\dd l}
=\frac{\mathcal R(E^*;u)}{\mathcal R(E_0;0)},
\]
where $\hat x_{E_0}^{\,0}$ is the unit-recruitment unfished profile. This is a
defensible per-recruit ratio, but it compares the fished and unfished states at
their respective equilibrium environments and is therefore not the classical
fixed-vital-rate SPR: under compensatory density feedback the vital rates differ
between numerator and denominator, so $\mathrm{SPR}_{\mathrm{eq}}$ may behave
differently from a conventional fixed-environment SPR (for instance, density
release at low abundance can raise per-recruit output and partially offset
harvesting). It is the quantity against which the forced-optimal harvesting
policies of Sections~\ref{sec:adjoint}--\ref{sec:stationary_switching} are compared
in Section~\ref{sec:concrete}. We emphasize that
$\mathrm{SPR}_{\mathrm{eq}}<1$ is compatible with forced persistence: prescribed
recruitment can sustain a population whose intrinsic replacement is below unity.

% ============================================================================
\section{Compact admissible class and optimal-control existence}
\label{sec:control_existence}

The finite-horizon problem is
\[
V_T:=\sup_{u\in\Uad}J_T(u),
\qquad
J_T(u)=\int_0^T e^{-rt}\int_\Om c(l)u(t,l)x^u(t,l)\dd l\dd t.
\]
The box class $\Uad$ is weak-$\ast$ compact, but weak-$\ast$ convergence
$u_n\overset{\ast}{\rightharpoonup}u$ need not pass to the bilinear product
$u_nx^{u_n}$, the obstruction being spatial oscillation. We therefore introduce a
spatial-$BV$ restriction as an explicit compactification, designed to yield strong
compactness of the states, not of the controls. Throughout assume the
hypotheses of Theorem~\ref{thm:wellposed} and $c\in L^\infty_+(\Om)$.

\subsection{Spatial-$BV$ compactification}
\label{subsec:BV_compactification}

Fix $C_u\ge u_{\max}$ and set
\[
\UadBV:=\Big\{u\in\Uad:
\operatorname*{ess\,sup}_{t\in(0,T)}\TV_\Om\big(u(t,\cdot)\big)\le C_u\Big\},
\qquad
V_T^{BV}:=\sup_{u\in\UadBV}J_T(u).
\]
Every threshold control $u_L$ satisfies $\TV_\Om(u_L(t,\cdot))\le u_{\max}$, so all
threshold policies lie in $\UadBV$; a bang--bang control with at most $k$ spatial
switches has variation at most $ku_{\max}$. The budget admits the distributional
form
\begin{equation}\label{eq:TV_distributional}
\Big|\int_0^T\!\!\int_\Om u\,\partial_l\varphi\dd l\dd t\Big|
\le C_u\int_0^T\|\varphi(t,\cdot)\|_{L^\infty(\Om)}\dd t,
\qquad \varphi\in C_c^1((0,T)\times\Om).
\end{equation}

\begin{lemma}[Weak-$\ast$ compactness of $\UadBV$]
\label{lem:UadBV_compact}
$\UadBV$ is convex and sequentially weak-$\ast$ compact in
$L^\infty((0,T)\times\Om)$.
\end{lemma}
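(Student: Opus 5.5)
Convexity comes first. If $u_1,u_2\in\UadBV$ and $\theta\in[0,1]$, then $0\le\theta u_1+(1-\theta)u_2\le u_{\max}$ a.e. For a.e.\ $t$ the total variation is a seminorm on $L^1(\Om)$, so
\[
\TV_\Om\big(\theta u_1(t,\cdot)+(1-\theta)u_2(t,\cdot)\big)\le\theta\TV_\Om(u_1(t,\cdot))+(1-\theta)\TV_\Om(u_2(t,\cdot))\le C_u .
\]
Taking the essential supremum over $t$ keeps the bound $C_u$.

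For compactness I would use that $L^1((0,T)\times\Om)$ is separable. Hence bounded sets of $L^\infty=(L^1)^*$ are sequentially weak-$\ast$ compact (Banach--Alaoglu plus metrizability on bounded sets). Given a sequence $(u_n)\subset\UadBV$, the uniform bound $\|u_n\|_\infty\le u_{\max}$ yields a subsequence with $u_n\overset{\ast}{\rightharpoonup}u$. It then suffices to show that $\UadBV$ is sequentially weak-$\ast$ closed.

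The box constraint passes to the limit by testing against nonnegative $\varphi\in L^1$: $\int u\varphi=\lim\int u_n\varphi$ lies in $[0,u_{\max}\int\varphi]$, so $0\le u\le u_{\max}$ a.e.

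The main step is to pass the variation budget to the limit. The key point is that it must hold for a.e.\ $t$, not merely in the averaged distributional form \eqref{eq:TV_distributional}, which by itself only gives a bound on the $t$-integral. I would localize in time using test functions of product form $\varphi(t,l)=\eta(t)\zeta(l)$ with $\eta\in L^1(0,T)$, $\eta\ge0$, and $\zeta\in C_c^1(\Om)$, $\|\zeta\|_\infty\le1$. Since $\eta\,\partial_l\zeta\in L^1$, weak-$\ast$ convergence gives
\[
\int_0^T\eta(t)\int_\Om u(t,l)\zeta'(l)\dd l\dd t
=\lim_n\int_0^T\eta(t)\int_\Om u_n(t,l)\zeta'(l)\dd l\dd t.
\]
For each $n$, the inner integral is bounded in absolute value by $\TV_\Om(u_n(t,\cdot))\le C_u$ for a.e.\ $t$. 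So the right-hand side is at most $C_u\int\eta$ in absolute value, and therefore so is the left.

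Because $\eta\ge0$ is arbitrary in $L^1$, this gives $\bigl|\int_\Om u(t,l)\zeta'(l)\dd l\bigr|\le C_u$ for a.e.\ $t$, with the null set depending on $\zeta$. To remove this dependence I would take a countable family $\{\zeta_k\}\subset C_c^1(\Om)$ with $\|\zeta_k\|_\infty\le1$ that is dense in the $C^1$ topology in the unit ball. Such a family computes the variation as a supremum: $\TV_\Om(v)=\sup_k\int_\Om v\zeta_k'$ for every $v\in L^1(\Om)$, because for fixed $v$ the functional $\zeta\mapsto\int v\zeta'$ is continuous in $C^1$. Discarding the countable union of exceptional null sets then gives $\TV_\Om(u(t,\cdot))\le C_u$ for a.e.\ $t$, so $u\in\UadBV$.

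I expect this a.e.-in-time localization, with the countable dense family, to be the only delicate point. The remaining steps are standard lower semicontinuity of total variation under weak convergence.
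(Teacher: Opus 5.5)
Your proof is correct and follows the paper's route: convexity from subadditivity of total variation, Banach--Alaoglu to extract a weak-$\ast$ limit, and then passing the box and variation constraints to that limit. Your time-localization with $\varphi=\eta(t)\zeta(l)$ and a countable dense family of $\zeta$'s is exactly the detail the paper compresses into ``passing to the limit in \eqref{eq:TV_distributional}''. One small correction: \eqref{eq:TV_distributional} is not merely an integrated bound, because its right side is $C_u\int_0^T\|\varphi(t,\cdot)\|_{L^\infty}\dd t$, and your own product-test-function argument shows it is equivalent to the a.e.-in-$t$ bound.
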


\begin{remark}[What is compactified]
\label{rmk:what_compactified}
$\Uad$ is already weak-$\ast$ compact; the role of $\UadBV$ is to give uniform
spatial regularity of the states and thereby make the control-to-state map
sequentially closed under weak-$\ast$ convergence of controls.
\end{remark}

\subsection{Uniform state compactness}
\label{subsec:state_compactness}

By Theorem~\ref{thm:wellposed},
\begin{equation}\label{eq:uniform_state_BV}
\sup_{u\in\UadBV}\sup_{t\in[0,T]}
\big(\|x^u(t,\cdot)\|_{L^1}+\|x^u(t,\cdot)\|_{L^\infty}+\TV_\Om(x^u(t,\cdot))\big)
\le C_T.
\end{equation}
We use the negative norm dual to $W^{1,\infty}$, defined explicitly by
\[
\|f\|_{\mathcal W^{-1,1}(\Om)}
:=\sup\Big\{\,|\langle f,\psi\rangle|:\psi\in W^{1,\infty}(\Om),\
\|\psi\|_{W^{1,\infty}(\Om)}\le1\,\Big\}.
\]

\begin{lemma}[Uniform weak time modulus]
\label{lem:weak_time_modulus}
There is $C_T>0$, independent of $u\in\UadBV$, with
\begin{equation}\label{eq:weak_time_estimate}
\|x^u(t+h,\cdot)-x^u(t,\cdot)\|_{\mathcal W^{-1,1}(\Om)}\le C_T h,
\qquad 0\le t\le t+h\le T.
\end{equation}
\end{lemma}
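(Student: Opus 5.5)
The plan is to read the estimate off the Green identity of Lemma~\ref{lem:green_trace}, applied to the coupled solution $x^u$. By the definition of the $\mathcal W^{-1,1}$ norm, it suffices to bound $|\int_\Om (x^u(t+h)-x^u(t))\psi\dd l|$ uniformly over test functions $\psi\in W^{1,\infty}(\Om)$ with $\|\psi\|_{W^{1,\infty}}\le1$. Such a $\psi$ has a Lipschitz representative, so $\|\psi\|_\infty\le1$, $\|\partial_l\psi\|_\infty\le1$, and the point values $\psi(l_0)$ and $\psi(l_m)$ are meaningful and bounded by $1$. The coupled solution from Theorem~\ref{thm:wellposed} is the frozen characteristic solution at its own environment $E=E^u\in\mathcal B_{M_T}^{L_E}$. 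Hence \eqref{eq:green_time_identity} applies with $g,\mu$ evaluated at $E^u(s)\in[0,M_T]$.

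Next I bound the four terms of \eqref{eq:green_time_identity} separately, each by a constant times $h$.

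The transport term is bounded by
\[
\int_t^{t+h}\!\!\int_\Om g|x||\partial_l\psi|\dd l\dd s\le g_{\max}(M_T)\,C_T h,
\]
using the uniform $L^1$ bound \eqref{eq:uniform_state_BV}.

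The reaction term is bounded by $(\mu_{\max,T}+u_{\max})C_Th$.

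The inflow term is bounded by $\|p\|_{L^\infty}h$.

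The outflow term is bounded by $\|\beta\|_{L^\infty}h\le g_{\max}C_T^0h$, using the trace bound in Lemma~\ref{lem:green_trace}.

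Summing the four bounds and taking the supremum over $\psi$ gives \eqref{eq:weak_time_estimate}. The resulting constant is
\[
C_T=(g_{\max}+\mu_{\max,T}+u_{\max})C_T^{\mathrm{st}}+\|p\|_\infty+g_{\max}C_T^0 .
\]

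The only delicate point is uniformity in $u\in\UadBV$. The bounds $C_T^0$ in \eqref{eq:frozen_uniform_bounds} and $M_T$ are independent of the environment path, and $E^u$ always stays in $[0,M_T]$. So I must check that $g_{\max}(M_T)$ and $\mu_{\max,T}$ are taken over the fixed compact set $[0,M_T]\times\Om$ and do not depend on the control. I also need the Green identity to hold for the coupled solution, not only for a frozen one; this is covered because Lemma~\ref{lem:green_trace} is stated for both. The identity requires no spatial-$BV$ information, so the estimate in fact holds uniformly over all of $\mathcal U_T^{\mathrm{wp}}(C_u)$.
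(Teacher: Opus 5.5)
Your proposal is correct and follows the paper's proof essentially verbatim: you test the Green identity \eqref{eq:green_time_identity} with $\psi$ in the unit ball of $W^{1,\infty}(\Om)$, bound the transport, reaction and inflow terms by $C_Th$ using the uniform $L^1$ state bound and the boundedness of $g,\mu,u,p$ on $[0,M_T]\times\Om$, bound the outflow term with the trace estimate of Lemma~\ref{lem:green_trace}, and take the supremum. One small caveat concerns your closing remark: the paper's argument for the existence of the outflow trace uses the $BV$ regularity of $u(s,\cdot)$, $\phi$ and $p$, so it is not accurate that no spatial-$BV$ information enters; this is harmless, since $\mathcal U_T^{\mathrm{wp}}(C_u)$ coincides with $\UadBV$ anyway.
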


Using the one-dimensional interpolation inequality
\begin{equation}\label{eq:BV_interpolation}
\|v\|_{L^1(\Om)}
\le
C_\Om\|v\|_{\mathcal W^{-1,1}(\Om)}^{1/2}
\big(\|v\|_{L^1(\Om)}+\TV_\Om(v)\big)^{1/2},
\qquad v\in BV(\Om),
\end{equation}
(obtained by extending $v$ to $\R$, mollifying at scale $\varepsilon$, and
optimizing $\|v\|_{L^1}\le C_\Om(\varepsilon\TV(v)+\varepsilon^{-1}\|v\|_{\mathcal W^{-1,1}})$),
we upgrade the time modulus.

\begin{lemma}[Uniform strong time modulus]
\label{lem:strong_time_modulus}
There is $C_T>0$, independent of $u\in\UadBV$, with
$\|x^u(t+h,\cdot)-x^u(t,\cdot)\|_{L^1(\Om)}\le C_T h^{1/2}$ for
$0\le t\le t+h\le T$.
\end{lemma}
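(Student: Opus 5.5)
The strategy is to apply the interpolation inequality \eqref{eq:BV_interpolation} to $v:=x^u(t+h,\cdot)-x^u(t,\cdot)$. Each factor on its right-hand side is then controlled by a result already established, uniformly in the control.

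First I fix $u\in\UadBV$ and $0\le t\le t+h\le T$, and check that $v\in BV(\Om)$. Both time slices are in $BV(\Om)$ by Theorem~\ref{thm:wellposed}, so the difference is as well, and the interpolation inequality applies.

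Second, I bound the "strong" factor. By the triangle inequality and the uniform state bound \eqref{eq:uniform_state_BV},
\[
\|v\|_{L^1(\Om)}+\TV_\Om(v)\le \|x^u(t+h)\|_{L^1}+\|x^u(t)\|_{L^1}+\TV_\Om(x^u(t+h))+\TV_\Om(x^u(t))\le 2C_T .
\]
The essential point is that this constant does not depend on $u\in\UadBV$. That uniformity holds because Theorem~\ref{thm:wellposed} gives $C_T$ depending only on $C_u$ and the data, and $\UadBV\subseteq\mathcal U_T^{\mathrm{wp}}(C_u)$ with the same budget.

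Third, the negative-norm factor is handled directly by Lemma~\ref{lem:weak_time_modulus}, which gives $\|v\|_{\mathcal W^{-1,1}}\le C_T h$ uniformly in $u$. Substituting both bounds into \eqref{eq:BV_interpolation} yields
\[
\|v\|_{L^1}\le C_\Om (C_T h)^{1/2}(2C_T)^{1/2}=:\tilde C_T\,h^{1/2}.
\]
The resulting constant depends only on $\Om$, $T$, the data, the coefficient bounds and $C_u$.

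No step is genuinely hard once the two earlier lemmas are in hand. The only point needing care is the interpolation inequality itself, which the paper states with a sketch. If I had to verify it, I would extend $v$ by zero to $\R$, which adds at most $2\|v\|_{L^\infty}$ to the variation; since $\|v\|_{L^\infty}\le 2C_T$, this keeps the bound uniform. I would then split $v=(v-v*\rho_\varepsilon)+v*\rho_\varepsilon$. The first piece has $L^1$ norm at most $\varepsilon\TV(v)$. For the second piece I would bound its $L^1$ norm by duality against a sign function mollified at scale $\varepsilon$, whose $W^{1,\infty}$ norm is $O(\varepsilon^{-1})$; this gives a bound of order $\varepsilon^{-1}\|v\|_{\mathcal W^{-1,1}}$. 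Choosing $\varepsilon=(\|v\|_{\mathcal W^{-1,1}}/(\|v\|_{L^1}+\TV(v)))^{1/2}$, capped by the length of $\Om$, completes the argument.
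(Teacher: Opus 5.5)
Your proposal is correct and is exactly the paper's proof: apply \eqref{eq:BV_interpolation} to $v=x^u(t+h,\cdot)-x^u(t,\cdot)$, then bound the $\mathcal W^{-1,1}$ factor by Lemma~\ref{lem:weak_time_modulus} and the $L^1+\TV$ factor by \eqref{eq:uniform_state_BV}, both uniformly in $u\in\UadBV$. Your sketch of the interpolation inequality is also sound; to get it in the stated form (with $\|v\|_{L^1}+\TV_\Om(v)$ on the right), bound the boundary jumps from extension by zero using $\|v\|_{L^\infty}\le(l_m-l_0)^{-1}\|v\|_{L^1}+\TV_\Om(v)$ instead of the state bound.
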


\begin{proposition}[Strong state compactness]
\label{prop:strong_state_compactness}
The set $\mathcal X_T^{BV}:=\{x^u:u\in\UadBV\}$ is relatively compact in
$C([0,T];L^1(\Om))$; in particular every $(u_n)\subset\UadBV$ has a subsequence
with $x^{u_n}\to\bar x$ strongly in $C([0,T];L^1(\Om))$.
\end{proposition}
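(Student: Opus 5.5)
The plan is an Arzelà--Ascoli argument in $C([0,T];L^1(\Om))$, the Banach-space-valued version. It needs two ingredients, both uniform over $u\in\UadBV$: pointwise-in-time relative compactness and equicontinuity in time.

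\emph{Pointwise compactness.} Fix $t\in[0,T]$. By \eqref{eq:uniform_state_BV}, the set $\{x^u(t,\cdot):u\in\UadBV\}$ is bounded in $BV(\Om)$, since $\|x^u(t,\cdot)\|_{L^1}+\TV_\Om(x^u(t,\cdot))\le C_T$ uniformly. On the bounded interval $\Om$, the compact embedding $BV(\Om)\hookrightarrow L^1(\Om)$ (Helly's selection theorem) shows this set is relatively compact in $L^1(\Om)$.

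\emph{Equicontinuity.} Lemma~\ref{lem:strong_time_modulus} gives $\|x^u(t+h)-x^u(t)\|_{L^1}\le C_T h^{1/2}$ with $C_T$ independent of $u$. So the family is uniformly equicontinuous as a family of maps $[0,T]\to L^1(\Om)$. The continuity of each $x^u$ into $L^1$ itself comes from Theorem~\ref{thm:wellposed}.

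\emph{Conclusion.} The vector-valued Arzelà--Ascoli theorem now gives relative compactness of $\mathcal X_T^{BV}$ in $C([0,T];L^1(\Om))$. For the sequential statement, I would argue directly. Take a countable dense set $\{t_k\}\subset[0,T]$ and use a diagonal argument to extract a subsequence with $x^{u_n}(t_k)$ convergent in $L^1$ for every $k$. The uniform $h^{1/2}$ modulus then upgrades this to a uniformly Cauchy subsequence in $C([0,T];L^1)$. Call the limit $\bar x$. The limit $\bar x$ is continuous in time, inherits the same modulus, and by lower semicontinuity of total variation under $L^1$ convergence satisfies $\sup_t\TV(\bar x(t))\le C_T$.

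There is no real obstacle here, since the heavy lifting is already done in the earlier estimates. The only delicate point is making sure the constants really are uniform. The $BV$ bound in Theorem~\ref{thm:wellposed} depends on $C_u$ but not on the individual control, and the strong modulus of Lemma~\ref{lem:strong_time_modulus} rests on \eqref{eq:BV_interpolation} combined with the uniform $\mathcal W^{-1,1}$ modulus. Both are already stated to be uniform over $\UadBV$. Note that $\bar x$ need not yet be identified as $x^{\bar u}$; that identification is deferred to the closedness argument.
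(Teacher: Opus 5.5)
Your proposal is correct and follows the paper's proof essentially verbatim. Both obtain pointwise-in-time relative compactness from the uniform bound \eqref{eq:uniform_state_BV} and the compact embedding $BV(\Om)\hookrightarrow L^1(\Om)$, take equicontinuity from the uniform $h^{1/2}$ modulus of Lemma~\ref{lem:strong_time_modulus}, and conclude with the vector-valued Arzel\`a--Ascoli theorem; your diagonal extraction simply unpacks that theorem, and the lower-semicontinuity remark on $\TV$ is a harmless addition.
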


\begin{remark}[Controls weak, states strong]
\label{rmk:weak_control_strong_state}
The compactification produces
$u_n\overset{\ast}{\rightharpoonup}u$ in $L^\infty$ together with
$x^{u_n}\to x^u$ strongly in $C([0,T];L^1)$. No time-compactness or strong
convergence of $u_n$ is claimed; the strong state convergence is what permits
passage to $u_nx^{u_n}$.
\end{remark}

\subsection{Sequential closedness}
\label{subsec:closed_control_state}

\begin{proposition}[Weak-$\ast$/strong closedness]
\label{prop:control_state_closed}
If $u_n\in\UadBV$ and $u_n\overset{\ast}{\rightharpoonup}u$ in
$L^\infty((0,T)\times\Om)$, then, after extraction, $x^{u_n}\to x^u$ strongly in
$C([0,T];L^1(\Om))$, where $x^u$ is the state of the limit $u\in\UadBV$.
\end{proposition}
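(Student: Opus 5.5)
The plan is to combine strong compactness of the states with weak-$\ast$ convergence of the controls, pass to the limit in the weak formulation of Definition~\ref{def:weak}, and close the argument with uniqueness from Theorem~\ref{thm:wellposed}.

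\emph{Step 1: the limit control is admissible and the states converge.} By Lemma~\ref{lem:UadBV_compact}, $\UadBV$ is weak-$\ast$ closed, so $u\in\UadBV$. By Proposition~\ref{prop:strong_state_compactness}, after extraction $x^{u_n}\to\bar x$ strongly in $C([0,T];L^1(\Om))$. The uniform bounds \eqref{eq:uniform_state_BV} give $\bar x\ge0$ and $\bar x\in L^\infty((0,T)\times\Om)$, and the $L^\infty$ bound persists by lower semicontinuity.

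\emph{Step 2: convergence of the environments and coefficients.} Since $\chi\in L^\infty$, we have $E_n:=\int_\Om\chi x^{u_n}\to\bar E:=\int_\Om\chi\bar x$ uniformly on $[0,T]$. All $E_n$ are $L_E$-Lipschitz with values in $[0,M_T]$, and these properties pass to $\bar E$. The coefficients $g,\mu$ are $C^1$ on the compact set $[0,M_T]\times\Om$, so $g(E_n(t),l)\to g(\bar E(t),l)$ and $\mu(E_n(t),l)\to\mu(\bar E(t),l)$ uniformly.

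\emph{Step 3: passing to the limit in the weak formulation.} Fix a test function $\psi$. In the linear terms, $x^{u_n}\big[\partial_t\psi+g(E_n,l)\partial_l\psi-\mu(E_n,l)\psi\big]$ converges in $L^1$, being a product of a strongly convergent factor with uniformly convergent, bounded coefficients. The data terms do not depend on $n$.

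The only delicate term is the bilinear one, $\int\!\!\int u_n x^{u_n}\psi$. Split it as
\[
\int_0^T\!\!\int_\Om u_n\big(x^{u_n}-\bar x\big)\psi
+\int_0^T\!\!\int_\Om u_n\,\bar x\,\psi .
\]
The first piece is bounded by $u_{\max}\|\psi\|_\infty\|x^{u_n}-\bar x\|_{L^1}\to0$. The second converges to $\int\!\!\int u\bar x\psi$, because $\bar x\psi\in L^1((0,T)\times\Om)$ and $u_n\overset{\ast}{\rightharpoonup}u$ in $L^\infty=(L^1)^*$. This is exactly the point where strong state convergence is indispensable (Remark~\ref{rmk:weak_control_strong_state}), and it is the step I regard as the crux; the rest is routine.

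\emph{Step 4: identification of the limit.} The function $\bar x$ is a nonnegative weak solution in $C([0,T];L^1)\cap L^\infty$ with control $u$ and environment $\bar E=\int_\Om\chi\bar x$. By the uniqueness part of Theorem~\ref{thm:wellposed}, $\bar x=x^u$. Since every subsequence has a further subsequence with this same limit, the convergence in fact holds for the whole extracted sequence.

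One subtlety needs checking. Uniqueness in Theorem~\ref{thm:wellposed} was obtained through the characteristic fixed point on $\mathcal B_{M_T}^{L_E}$, so I must make sure that weak solutions coincide with the characteristic solution. If the theorem's uniqueness is only within the characteristic class, I would argue as follows. With $\bar E$ fixed, the problem is linear transport with a $C^1$ positive speed; by the standard duality argument (solving the backward adjoint transport for smooth test data), the weak solution is unique and equals the frozen representation $x_{\bar E}$ from \eqref{eq:frozen_representation}. Hence $\bar x=x_{\bar E}$ with $\bar E=\mathcal F\bar E$, and $\bar E\in\mathcal B_{M_T}^{L_E}$ is the unique fixed point, so $\bar x=x^u$.
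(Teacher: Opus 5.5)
Your proposal is correct and follows the paper's proof essentially step for step. Both arguments use strong compactness of the states, uniform convergence of $E^{u_n}$ and of the coefficients, the same splitting of $\int\!\!\int u_n x^{u_n}\psi$ into a weak-$\ast$ part against $\bar x\psi\in L^1$ plus a strong-$L^1$ remainder, and identification of the limit by uniqueness together with the subsequence argument. Your extra check that uniqueness holds among all weak solutions (linear uniqueness for the frozen environment $\bar E$, then uniqueness of the fixed point of $\mathcal F$ in $\mathcal B_{M_T}^{L_E}$) is a point the paper simply takes from Theorem~\ref{thm:wellposed}, so it is a welcome refinement rather than a different route.
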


\begin{proof}
By Proposition~\ref{prop:strong_state_compactness}, $x^{u_n}\to\bar x$ strongly,
hence
\[
E^{u_n}(t)=\int_\Om\chi x^{u_n}(t,\cdot)
\longrightarrow
\bar E(t):=\int_\Om\chi\bar x(t,\cdot)
\]
uniformly on $[0,T]$, so $g(E^{u_n},\cdot)\to g(\bar E,\cdot)$ and
$\mu(E^{u_n},\cdot)\to\mu(\bar E,\cdot)$ uniformly. For the bilinear term, write,
for bounded $\psi$,
\[
\int_0^T\!\!\int_\Om u_n x^{u_n}\psi
=
\int_0^T\!\!\int_\Om u_n\bar x\,\psi
+\int_0^T\!\!\int_\Om u_n(x^{u_n}-\bar x)\psi.
\]
Since $\bar x\psi\in L^1$, weak-$\ast$ convergence handles the first term; the
second is bounded by $u_{\max}\|\psi\|_\infty\|x^{u_n}-\bar x\|_{L^1}\to0$. Passing
to the limit in the weak formulation shows $\bar x$ solves the state equation with
control $u$, so $\bar x=x^u$ by uniqueness; the limit being independent of the
subsequence gives the claim.
\end{proof}

\subsection{Existence of an optimal control}
\label{subsec:optimal_existence}

\begin{theorem}[Existence in $\UadBV$]
\label{thm:optimal_existence_BV}
Under the hypotheses of Theorem~\ref{thm:wellposed} with $c\in L^\infty_+(\Om)$,
there is $u_T^*\in\UadBV$ with
$J_T(u_T^*)=V_T^{BV}=\sup_{u\in\UadBV}J_T(u)$.
\end{theorem}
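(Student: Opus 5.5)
The plan is the direct method of the calculus of variations, combining Lemma~\ref{lem:UadBV_compact} with Proposition~\ref{prop:control_state_closed}.

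First, $V_T^{BV}$ is finite. For every $u\in\UadBV$, the uniform bound \eqref{eq:uniform_state_BV} gives
\[
0\le J_T(u)\le u_{\max}\|c\|_{L^\infty}\int_0^T\|x^u(t,\cdot)\|_{L^1}\dd t\le u_{\max}\|c\|_{L^\infty}TC_T .
\]
Moreover $\UadBV$ is nonempty, since it contains $u\equiv0$ and every threshold control. We therefore fix a maximizing sequence $(u_n)\subset\UadBV$ with $J_T(u_n)\to V_T^{BV}$.

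By Lemma~\ref{lem:UadBV_compact} we extract a subsequence, not relabeled, with $u_n\overset{\ast}{\rightharpoonup}u_T^*$ in $L^\infty((0,T)\times\Om)$ and $u_T^*\in\UadBV$. Closedness of the TV budget under weak-$\ast$ limits is already part of that lemma, via the distributional form \eqref{eq:TV_distributional}. Proposition~\ref{prop:control_state_closed} then gives, after a further extraction, $x^{u_n}\to x^{u_T^*}$ strongly in $C([0,T];L^1(\Om))$.

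The final step is to pass to the limit in the bilinear payoff. We split it exactly as in the proof of Proposition~\ref{prop:control_state_closed}, taking $\psi(t,l):=e^{-rt}c(l)\in L^\infty$:
\[
J_T(u_n)=\int_0^T\!\!\int_\Om u_n x^{u_T^*}\psi\dd l\dd t+\int_0^T\!\!\int_\Om u_n\big(x^{u_n}-x^{u_T^*}\big)\psi\dd l\dd t .
\]
The first term converges to $J_T(u_T^*)$ by weak-$\ast$ convergence, because $x^{u_T^*}\psi\in L^1((0,T)\times\Om)$. The second term is bounded by $u_{\max}\|c\|_\infty T\sup_t\|x^{u_n}-x^{u_T^*}\|_{L^1}\to0$. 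Hence $J_T(u_T^*)=\lim J_T(u_n)=V_T^{BV}$, which is the claim.

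The only delicate point is the bilinear product $u_nx^{u_n}$, where weak convergence of controls alone would fail. This obstacle is already removed by the strong state convergence provided by the spatial-$BV$ compactification (Proposition~\ref{prop:strong_state_compactness}). What remains is to make sure the limit control stays in $\UadBV$, so that Theorem~\ref{thm:wellposed} defines $x^{u_T^*}$, and this is exactly Lemma~\ref{lem:UadBV_compact}. No further argument is needed beyond checking the measurability and integrability of $\psi$.
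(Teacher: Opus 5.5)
Your proposal is correct and follows the paper's proof essentially step for step. It uses a maximizing sequence, weak-$\ast$ extraction in $\UadBV$ via Lemma~\ref{lem:UadBV_compact}, strong state convergence from Proposition~\ref{prop:control_state_closed}, and the same two-term splitting of the bilinear payoff; your preliminary check that $V_T^{BV}$ is finite and $\UadBV$ is nonempty is a small addition the paper leaves implicit.
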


\begin{proof}
Let $(u_n)\subset\UadBV$ be maximizing. By Lemma~\ref{lem:UadBV_compact},
$u_n\overset{\ast}{\rightharpoonup}u_T^*\in\UadBV$; by
Proposition~\ref{prop:control_state_closed}, $x^{u_n}\to x^{u_T^*}$ strongly in
$C([0,T];L^1)$. Writing
\[
\int_0^T\!\!\int_\Om e^{-rt}c\,u_n x^{u_n}
=\int_0^T\!\!\int_\Om e^{-rt}c\,u_n x^{u_T^*}
+\int_0^T\!\!\int_\Om e^{-rt}c\,u_n(x^{u_n}-x^{u_T^*}),
\]
the first term converges by weak-$\ast$ convergence (since
$e^{-rt}c\,x^{u_T^*}\in L^1$) and the second to zero by strong $L^1$ convergence,
so $J_T(u_n)\to J_T(u_T^*)=V_T^{BV}$.
\end{proof}

\subsection{Scope of the compactification}
\label{subsec:compactification_scope}

Theorem~\ref{thm:optimal_existence_BV} establishes existence for the restricted
problem only; we do not prove $V_T^{BV}=V_T$ nor that an unrestricted optimizer
lies in $\UadBV$. The relation is
$\displaystyle V_T^{BV}\le V_T$,
with equality not established. We accordingly read all first-order results below
as an optimality theory for $\UadBV$. In particular, the ecological conclusions of
Section~\ref{sec:concrete} should be read as ``among policies of bounded spatial
complexity, an optimum exists and (conditionally) is minimum-size,'' not as a
statement about the unrestricted optimum.

\begin{remark}[Alternative relaxed formulations]
\label{rmk:relaxed_formulations}
Measure-valued harvesting controls provide an alternative route when optimal
sequences develop concentrations or fine oscillations
\cite{hritonenko2023existence,coclite2017time,wang2025multi}; developing that
relaxation and deciding equality with $V_T^{BV}$ is beyond our scope.
\end{remark}

\subsection{Threshold policies}
\label{subsec:threshold_compactification}

For minimum-size controls, compactness can be imposed on the threshold itself:
\[
\mathcal L_{ad}:=\big\{L\in BV(0,T):l_0\le L(t)\le l_m\ \text{a.e.},\
\TV_{(0,T)}(L)\le C_L\big\},
\]
with $\|u_{L_1}-u_{L_2}\|_{L^1((0,T)\times\Om)}=u_{\max}\int_0^T|L_1-L_2|$.

\begin{proposition}[Existence in the threshold class]
\label{prop:threshold_existence}
There is $L_T^*\in\mathcal L_{ad}$ with
$J_T(u_{L_T^*})=\sup_{L\in\mathcal L_{ad}}J_T(u_L)$.
\end{proposition}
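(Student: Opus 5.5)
The argument is the direct method on the threshold parametrization. It has the same structure as Theorem~\ref{thm:optimal_existence_BV}, but compactness is now taken in the threshold variable $L$ rather than in the control. Let $(L_n)\subset\mathcal L_{ad}$ be a maximizing sequence, so $J_T(u_{L_n})\to\sup_{L\in\mathcal L_{ad}}J_T(u_L)$.

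\textbf{Step 1 (compactness of thresholds).} The sequence satisfies $l_0\le L_n\le l_m$ and $\TV_{(0,T)}(L_n)\le C_L$, so it is bounded in $BV(0,T)$. By Helly's selection theorem, or equivalently the compact embedding $BV(0,T)\hookrightarrow L^1(0,T)$, a subsequence converges in $L^1(0,T)$ and a.e.\ to some $L^*$. Lower semicontinuity of total variation under $L^1$ convergence gives $\TV(L^*)\le C_L$. The a.e.\ limit preserves the box constraint. Hence $L^*\in\mathcal L_{ad}$.

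\textbf{Step 2 (strong convergence of controls).} The identity $\|u_{L_1}-u_{L_2}\|_{L^1((0,T)\times\Om)}=u_{\max}\int_0^T|L_1-L_2|$ turns Step 1 into $u_{L_n}\to u_{L^*}$ strongly in $L^1$. Every threshold control satisfies $\TV_\Om(u_L(t,\cdot))\le u_{\max}\le C_u$, so all these controls lie in $\mathcal U_T^{\mathrm{wp}}(C_u)=\UadBV$. Proposition~\ref{prop:continuous_dependence}, applied with identical data $\phi,p$, then gives $\sup_t\|x^{u_{L_n}}-x^{u_{L^*}}\|_{L^1}\le C_T\|u_{L_n}-u_{L^*}\|_{L^1}\to0$, with $C_T$ uniform because of the common $L^\infty$ and $BV$ bounds of Theorem~\ref{thm:wellposed}. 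As an alternative to this step, one could use weak-$\ast$ convergence of $u_{L_n}$ together with Proposition~\ref{prop:control_state_closed}.

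\textbf{Step 3 (passage to the limit in $J_T$).} Split the payoff exactly as in the proof of Theorem~\ref{thm:optimal_existence_BV}:
\[
\int_0^T\!\!\int_\Om e^{-rt}c\,u_{L_n}x^{u_{L_n}} = \int_0^T\!\!\int_\Om e^{-rt}c\,u_{L_n}x^{u_{L^*}} + \int_0^T\!\!\int_\Om e^{-rt}c\,u_{L_n}\big(x^{u_{L_n}}-x^{u_{L^*}}\big).
\]
For the first term, $u_{L_n}\to u_{L^*}$ in $L^1$ with uniform bound $u_{\max}$ and $c\,x^{u_{L^*}}\in L^\infty$ by Theorem~\ref{thm:wellposed}, so the term converges. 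The second term is bounded by $u_{\max}\|c\|_\infty T\sup_t\|x^{u_{L_n}}-x^{u_{L^*}}\|_{L^1}\to0$. Therefore $J_T(u_{L^*})=\lim J_T(u_{L_n})$ equals the supremum, and we may set $L_T^*:=L^*$.

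The only point needing care is Step 1: verifying that the $BV$ limit stays in $\mathcal L_{ad}$, and that the $L^1$ convergence of $L_n$ really transfers to the control and hence to the state. The quoted $L^1$ identity and the uniform stability estimate make this routine, so I expect no serious obstacle.
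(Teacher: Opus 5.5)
Your proof is correct and follows the paper's argument: compactness of the maximizing thresholds in $L^1(0,T)$ via the compact embedding $BV(0,T)\hookrightarrow L^1(0,T)$, transfer to strong $L^1$ convergence of $u_{L_n}$ through the identity $\|u_{L_1}-u_{L_2}\|_{L^1}=u_{\max}\int_0^T|L_1-L_2|$, and passage to the limit in $J_T$ via continuous dependence and boundedness. You also fill in details the paper leaves implicit, namely lower semicontinuity of total variation (so the limit stays in $\mathcal L_{ad}$), membership of threshold controls in $\UadBV$, and the explicit payoff splitting, and these details are correct.
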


\begin{remark}[Stationary threshold]
\label{rmk:stationary_threshold_existence}
For a stationary threshold $L\in[l_0,l_m]$ no temporal compactification is needed:
$[l_0,l_m]$ is compact and $L\mapsto J_{\mathrm{st}}(L)$ continuous, so a maximizer
exists. Section~\ref{sec:stationary_switching} determines when it is consistent
with the full nonlocal switching condition.
\end{remark}

% ============================================================================
\section{Adjoint and bang--bang variational inequality}
\label{sec:adjoint}

We derive first-order conditions for $\sup_{u\in\UadBV}J_T(u)$. Maximum principles
and bang--bang laws are well established for related models
\cite{brokate1985pontryagin,kato2008optimal,yu2026from,hritonenko2009bang,anita2019optimal,gao2022rolling}; the distinctive feature is the nonlocal term created when
the environment enters the rates. The correct general condition is a variational
inequality over the tangent cone of $\UadBV$; a pointwise bang--bang rule follows
only when the spatial-variation constraint is inactive.

We emphasize at the outset that this finite-horizon adjoint theory is  conditional. The nonlocal functional
\eqref{eq:Gamma_BV} pairs a measure $D_lx^*$ with the costate, and the
single-crossing analysis treats $S^*(t,\cdot)$ as continuous in size; both require
$\lambda^*(t,\cdot)\in C(\Om)$, which---as Remark~\ref{rmk:costate_discontinuity}
shows---can fail under the spatial-$BV$ control class. We therefore carry an
explicit size-continuity hypothesis \eqref{eq:costate_regularity} throughout this
section. The stationary theory of Section~\ref{sec:stationary_switching} is free
of this issue: its adjoint is an ODE with bounded coefficients and is absolutely
continuous, hence continuous in size, even for bang--bang $u(l)$.

Let $u^*\in\UadBV$ be optimal (Theorem~\ref{thm:optimal_existence_BV}), with
$x^*:=x^{u^*}$ and $E^*(t):=\int_\Om\chi x^*(t,\cdot)$. Following
\cite{diekmann2025age}, who note that solution operators of size-structured
consumer--resource models may fail to be differentiable in general, we impose the
clean regularity that the rates depend smoothly on $E$ as maps into the
spatial spaces dictated by the characteristic argument:
\begin{equation}\label{eq:adjoint_assumptions}
c\in C^1(\Om),
\quad
\chi\in C(\Om),
\quad
E\mapsto g(E,\cdot)\in C^1\big(\R_+;W^{1,\infty}(\Om)\big),
\quad
E\mapsto\mu(E,\cdot)\in C^1\big(\R_+;L^\infty(\Om)\big),
\end{equation}
and $x^*\in L^\infty(0,T;BV(\Om))\cap L^\infty((0,T)\times\Om)$. The
$C^1(\R_+;W^{1,\infty}(\Om))$ hypothesis on $g$ supplies not only the zeroth-order
expansion of $g$ but the first-order expansion of $\partial_lg$ in $E$ needed by
the characteristic attenuation, replacing a list of separate mixed-derivative
bounds. We additionally assume the optimal-state costate regularity
\begin{equation}\label{eq:costate_regularity}
\lambda^*\in C\big([0,T];C(\Om)\big),
\tag{R}
\end{equation}
discussed and justified in Remark~\ref{rmk:costate_discontinuity}.

\subsection{Linearized state equation}
\label{subsec:linearized_state}

For a feasible bounded direction $h$ of finite spatial variation, set
$u^\varepsilon=u^*+\varepsilon h$, $x^\varepsilon=x^{u^\varepsilon}$,
$E^\varepsilon=\int_\Om\chi x^\varepsilon$, and let
$z=\lim_{\varepsilon\downarrow0}(x^\varepsilon-x^*)/\varepsilon$,
$\delta E(t)=\int_\Om\chi z(t,\cdot)$. Formal differentiation gives
\begin{equation}\label{eq:linearized_state}
\partial_t z+\partial_l\!\big(g(E^*,l)z\big)
+\partial_l\!\big(\partial_Eg(E^*,l)x^*\delta E\big)
=
-\big(\mu(E^*,l)+u^*\big)z-\partial_E\mu(E^*,l)x^*\delta E-hx^*,
\end{equation}
with $z(0,\cdot)=0$ and, since the inflow flux is control-independent,
\begin{equation}\label{eq:linearized_boundary}
g(E^*,l_0)z(t,l_0)+\partial_Eg(E^*,l_0)x^*(t,l_0^+)\delta E(t)=0.
\end{equation}

\begin{proposition}[Directional differentiability]
\label{prop:state_differentiability}
For every bounded feasible direction $h$ of finite spatial variation, the
difference quotient $(x^{u^*+\varepsilon h}-x^*)/\varepsilon$ converges in
$C([0,T];L^1(\Om))$ to the unique weak solution $z$ of
\eqref{eq:linearized_state}--\eqref{eq:linearized_boundary}.
\end{proposition}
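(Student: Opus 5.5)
The natural route is the classical difference-quotient argument. We write the state equation along the perturbed characteristics, use the stability estimates of Section~\ref{sec:wellposed} to control the difference quotient uniformly, and identify the limit through the weak formulation of the linearized problem. Set $w^\varepsilon:=(x^\varepsilon-x^*)/\varepsilon$ and $\delta E^\varepsilon:=(E^\varepsilon-E^*)/\varepsilon=\int_\Om\chi w^\varepsilon$. The direction $h$ is bounded with finite spatial variation, and $u^\varepsilon$ is feasible. So for small $\varepsilon$ all $u^\varepsilon$ lie in a common class $\mathcal U_T^{\mathrm{wp}}(C_u')$, and Theorem~\ref{thm:wellposed} gives uniform $L^1$, $L^\infty$ and spatial-$BV$ bounds together with a common Lipschitz constant for $E^\varepsilon$.

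\textbf{Step 1: uniform bound on the quotient.} Proposition~\ref{prop:continuous_dependence} gives $\sup_t\|x^\varepsilon-x^*\|_{L^1}\le C_T\|\varepsilon h\|_{L^1}$. Hence $\sup_t\|w^\varepsilon\|_{L^1}\le C$ and $\|\delta E^\varepsilon\|_{C([0,T])}\le C$.

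\textbf{Step 2: linearized problem and remainder equation.} First we establish well-posedness of \eqref{eq:linearized_state}--\eqref{eq:linearized_boundary}. This problem is a linear transport equation with the nonlocal scalar source $\delta E(t)$. It also contains the distributional term $\partial_l(\partial_Eg\,x^*\delta E)$, which is a measure because $x^*\in L^\infty(0,T;BV)$ and $\partial_Eg\in W^{1,\infty}$. We handle it as in the nonlinear fixed point: freeze the source $\delta E\in C([0,T])$, solve by characteristics with the $BV$-measure source integrated along characteristics, and close the loop $\delta E\mapsto\int_\Om\chi z$ by a contraction on short intervals. A Gronwall estimate then extends the solution to $[0,T]$, since the problem is linear.

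Next we subtract the weak formulations (Definition~\ref{def:weak}) for $x^\varepsilon$ and $x^*$, divide by $\varepsilon$, and expand the coefficients by Taylor's formula. The hypothesis $E\mapsto g(E,\cdot)\in C^1(\R_+;W^{1,\infty})$ gives $g(E^\varepsilon)-g(E^*)=\partial_Eg(E^*)\,\varepsilon\delta E^\varepsilon+o(\varepsilon)$ in $W^{1,\infty}$, uniformly in $t$, and $\mu$ expands the same way in $L^\infty$. So $w^\varepsilon$ solves the linearized problem driven by $\delta E^\varepsilon$ plus a remainder $R^\varepsilon$. The remainder collects the $o(1)$ coefficient errors multiplied by $x^\varepsilon$ and by $\partial_lx^\varepsilon$ (a measure with uniformly bounded mass), the cross term $\partial_Eg\,(x^\varepsilon-x^*)\,\delta E^\varepsilon$, and the term $h(x^\varepsilon-x^*)$. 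The inflow condition produces the boundary term $\partial_Eg(E^*,l_0)x^*(t,l_0^+)\delta E$, with $x^*(t,l_0^+)$ read off from the $BV$ trace.

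\textbf{Step 3: convergence.} Let $z$ be the solution of the linearized problem. Then $e^\varepsilon:=w^\varepsilon-z$ solves the linearized problem with the source $R^\varepsilon$ in place of the forcing $-hx^*$. The stability estimate for the linear problem (duality against Lipschitz test functions, as in Lemma~\ref{lem:short_stability}) gives
\[
\sup_t\|e^\varepsilon\|_{L^1}\le C\|R^\varepsilon\|_{\mathrm{dual}},
\]
where the dual norm is the negative norm $\mathcal W^{-1,1}$ integrated in time. Each piece of $R^\varepsilon$ tends to zero in this norm: the $o(1)$ coefficient errors paired with bounded $BV$ masses do, and the cross terms are $O(\|x^\varepsilon-x^*\|_{L^1})=O(\varepsilon)$.

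\textbf{Main obstacle.} The difficulty is that stability in $\mathcal W^{-1,1}$ is not enough by itself to give $L^1$ convergence. We upgrade it using the uniform $BV$ bounds on $w^\varepsilon$. To obtain those bounds we would differentiate the characteristic representation \eqref{eq:frozen_representation} in $\varepsilon$, controlling variations of the flow $\eta_E$ and of the entrance time $\tau_E$. We would then interpolate as in \eqref{eq:BV_interpolation}. If the $BV$ control of $w^\varepsilon$ fails, the fallback is weaker. The quotients are equicontinuous in time in $\mathcal W^{-1,1}$ by the Green identity of Lemma~\ref{lem:green_trace}. Every weak limit solves the uniquely solvable linearized problem, so the whole family converges weakly. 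Strong $C([0,T];L^1)$ convergence would then be recovered by the same Lipschitz/$BV$ compactness argument used in Proposition~\ref{prop:strong_state_compactness}.
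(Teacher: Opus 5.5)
There is a genuine gap, and it is the step you flag yourself: getting from control of $e^\varepsilon=w^\varepsilon-z$ in a negative norm to convergence in $C([0,T];L^1(\Om))$. Your displayed estimate cannot hold as written. A source that is small only in $\mathcal W^{-1,1}$ controls the solution only in $\mathcal W^{-1,1}$, not in $L^1$. Even that duality step needs the backward problem to keep Lipschitz test functions Lipschitz, and this fails when $u^*$ jumps along a characteristic (Remark~\ref{rmk:costate_discontinuity}).

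Neither of your remedies can work. A uniform $BV$ bound on $w^\varepsilon$ is the statement $\TV(x^\varepsilon(t)-x^*(t))=O(\varepsilon)$. That is false whenever $x^*(t,\cdot)$ has a jump and $\partial_Eg\,\delta E\not\equiv0$:
\begin{itemize}
\item jumps are carried by characteristics of $g(E,\cdot)$, and those characteristics are displaced by $O(\varepsilon)$;
\item so $w^\varepsilon(t)$ contains a block $\varepsilon^{-1}[x^*]\mathbf 1_{I_\varepsilon(t)}$ with $|I_\varepsilon(t)|$ of order $\varepsilon$;
\item hence $\TV(w^\varepsilon(t))\sim\varepsilon^{-1}$, and the quotients concentrate into an atom, converging only weakly-$*$ as measures.
\end{itemize}
The compactness fallback via Proposition~\ref{prop:strong_state_compactness} needs exactly this $BV$ bound. $L^1$-boundedness plus $\mathcal W^{-1,1}$-equicontinuity only yields measure-valued limits.

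Your Step 2 already shows this. The singular part of $\partial_Eg\,\delta E\,D_lx^*$ lives on a characteristic, so integrating it along characteristics puts an atom into $z(t)$; hence $z\notin C([0,T];L^1)$.

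Here is a concrete instance satisfying all the paper's hypotheses: $g=g_0/(1+\rho E)$, $\mu\equiv\mu_0$, $\chi\equiv1$, $\phi\equiv0$, $p\equiv1$, $h\equiv1$, and $c\equiv0$, so $J_T\equiv0$ and $u^*\equiv0$ is optimal.
\begin{itemize}
\item Until the front $\ell(t)=l_0+\int_0^tg(E^*(s))\dd s$ reaches $l_m$ there is no outflow, so $E^\varepsilon(t)=\frac{1}{\mu_0+\varepsilon}\big(1-e^{-(\mu_0+\varepsilon)t}\big)$ and $\delta E<0$ for $t>0$.
\item The front moves to $\ell(t)+\varepsilon d(t)+o(\varepsilon)$ with $d(t)=\int_0^t\partial_Eg(E^*(s))\,\delta E(s)\dd s>0$.
\item The front carries the jump $g_0^{-1}e^{-\mu_0t}$, so the quotients concentrate mass $g_0^{-1}e^{-\mu_0t}d(t)$ at $\ell(t)$ and are not Cauchy in $L^1$.
\end{itemize}
So the $L^1$ conclusion is not provable at this level of generality.

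The paper takes a different route. It writes the equation for the remainder (your $e^\varepsilon$), tests it with $\sgn$ of the remainder, and closes an $L^1$ Gr\"onwall inequality. This does not get around your obstacle. The paper asserts that the cross term $\partial_l\big(\partial_Eg\,\delta E^\varepsilon(x^\varepsilon-x^*)\big)$ is $O(\varepsilon)$. In an $L^1$ estimate, however, a divergence source counts with its total mass, which is bounded only by $C|\delta E^\varepsilon|\big(\|x^\varepsilon-x^*\|_{L^1}+\TV(x^\varepsilon-x^*)\big)=O(1)$. It is $O(\varepsilon)$ only in $\mathcal W^{-1,1}$, which is exactly your point.

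What does go through:
\begin{itemize}
\item If $\partial_Eg\equiv0$, the characteristics do not depend on $\varepsilon$. Every term of the remainder equation is then a genuine $L^1$ function of size $o(1)+C\|e^\varepsilon\|_{L^1}$, and the sign/Gr\"onwall argument closes in $C([0,T];L^1)$.
\item If $\partial_Eg\neq0$, you need either extra hypotheses forcing $\TV(x^\varepsilon-x^*)=O(\varepsilon)$ (for instance, states without jumps and with $\partial_lx^*$ uniformly in $BV$), or a reformulation in a negative-norm topology with a measure-valued $z$. The latter is the direction your Step 3 points to, subject to the Lipschitz-preservation caveat above.
\end{itemize}
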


\subsection{Nonlocal adjoint and its $BV$ interpretation}
\label{subsec:formal_adjoint}

We use a current-value costate $\lambda$, related to the present-value costate by
$\lambda_{\mathrm{pv}}=e^{-rt}\lambda$, which produces the $-r\lambda$ term below.
Dualizing the $\delta E$ terms produces the formal scalar feedback
\begin{equation}\label{eq:Gamma_formal}
\Gamma(t):=
\int_{l_0}^{l_m}
\big[\partial_Eg(E^*,\xi)\partial_\xi\lambda(t,\xi)
-\partial_E\mu(E^*,\xi)\lambda(t,\xi)\big]x^*(t,\xi)\dd\xi,
\end{equation}
giving the formal current-value adjoint
\[
-\partial_t\lambda-g(E^*,l)\partial_l\lambda
=
c\,u^*-\big(r+\mu(E^*,l)+u^*\big)\lambda+\chi(l)\Gamma(t),
\qquad
\lambda(T,\cdot)=0,\ \lambda(t,l_m)=0.
\]
Since $x^*(t,\cdot)\in BV(\Om)$ and $\lambda$ is only continuous, the derivative
$\partial_\xi\lambda$ in \eqref{eq:Gamma_formal} is interpreted via a $BV$
integration-by-parts. With $G_t(l):=\partial_Eg(E^*(t),l)\in C^1(\Om)$, the product
$G_tx^*(t,\cdot)\in BV(\Om)$ with $D_l(G_tx^*)=G_tD_lx^*+\partial_lG_t\,x^*\dd l$.

\begin{definition}[Nonlocal adjoint functional]
\label{def:Gamma_BV}
For $\lambda(t,\cdot)\in C(\Om)$,
\begin{equation}\label{eq:Gamma_BV}
\begin{aligned}
\mathcal G_{x^*}[\lambda](t)
:={}&
-\partial_Eg(E^*,l_0)\lambda(t,l_0)x^*(t,l_0^+)
-\int_{[l_0,l_m]}\lambda(t,\xi)\partial_Eg(E^*,\xi)\,D_lx^*(t,\dd\xi)\\
&-\int_{l_0}^{l_m}\lambda(t,\xi)
\big[\partial_\xi\partial_Eg(E^*,\xi)+\partial_E\mu(E^*,\xi)\big]x^*(t,\xi)\dd\xi.
\end{aligned}
\end{equation}
\end{definition}

When $x^*,\lambda$ are smooth, $\mathcal G_{x^*}[\lambda]$ agrees with
\eqref{eq:Gamma_formal}, the $l_m$-boundary term vanishing by $\lambda(t,l_m)=0$.

\begin{lemma}[Boundedness]
\label{lem:Gamma_bounded}
There is $C_\Gamma>0$ with
$|\mathcal G_{x^*}[\lambda](t)|\le C_\Gamma\|\lambda(t,\cdot)\|_{C(\Om)}$ for a.e.\
$t$.
\end{lemma}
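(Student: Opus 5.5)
We bound the three terms of \eqref{eq:Gamma_BV} separately, each by a constant times $\|\lambda(t,\cdot)\|_{C(\Om)}$. First, since the optimal environment obeys $0\le E^*(t)\le M_T$ for all $t$ (the population bound of Section~\ref{sec:wellposed}, uniform over the admissible class), every coefficient is evaluated on the compact interval $[0,M_T]$. By \eqref{eq:adjoint_assumptions}, the maps $E\mapsto\partial_Eg(E,\cdot)\in W^{1,\infty}(\Om)$ and $E\mapsto\partial_E\mu(E,\cdot)\in L^\infty(\Om)$ are continuous. Hence
\[
K:=\sup_{E\in[0,M_T]}\Big(\|\partial_Eg(E,\cdot)\|_{W^{1,\infty}(\Om)}+\|\partial_E\mu(E,\cdot)\|_{L^\infty(\Om)}\Big)<\infty .
\]
This controls $|\partial_Eg(E^*,l_0)|$, $\sup_\xi|\partial_Eg(E^*,\xi)|$ and $\|\partial_\xi\partial_Eg(E^*,\cdot)+\partial_E\mu(E^*,\cdot)\|_{L^\infty}$, all by $2K$.

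Next we use the state bounds. By Theorem~\ref{thm:wellposed}, and consistently with the standing assumption $x^*\in L^\infty(0,T;BV(\Om))\cap L^\infty$ of this section, there is $C_T$ such that
\[
\sup_t\Big(\|x^*(t,\cdot)\|_{L^1}+\|x^*(t,\cdot)\|_{L^\infty}+\TV_\Om(x^*(t,\cdot))\Big)\le C_T .
\]
The terms then estimate as follows.

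\textbf{Boundary term.} For a BV function the one-sided trace satisfies $|x^*(t,l_0^+)|\le\|x^*(t,\cdot)\|_{L^\infty}\le C_T$ for a.e.\ $t$. So this term is at most $K C_T\|\lambda(t,\cdot)\|_{C}$.

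\textbf{Measure term.} Since $\lambda(t,\cdot)\,\partial_Eg(E^*,\cdot)$ is continuous and hence Borel bounded, its integral against the finite Radon measure $D_lx^*(t,\cdot)$ is bounded by
\[
\sup|\lambda\,\partial_Eg|\;|D_lx^*(t,\cdot)|([l_0,l_m])\le K\|\lambda(t,\cdot)\|_C\,\TV_\Om(x^*(t,\cdot)).
\]
The one point needing care is the closed interval $[l_0,l_m]$. The variation measure should be taken for the restriction to the interior, with the jump at $l_0$ already accounted for by the separate boundary term. Even if endpoint atoms are included, their mass is bounded by the trace, which is at most $C_T$. In either case the bound is of the form $K\|\lambda\|_C(C_T+2C_T)$.

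\textbf{Zeroth-order term.} This is bounded by $2K\|\lambda(t,\cdot)\|_C\|x^*(t,\cdot)\|_{L^1}\le 2KC_T\|\lambda(t,\cdot)\|_C$.

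Summing the three estimates gives $C_\Gamma=5KC_T$ (or any comparable constant). This holds for a.e.\ $t$, namely those $t$ at which the BV and trace bounds are attained. Measurability in $t$ is not needed for the pointwise claim.

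The main (mild) obstacle is justifying the trace and endpoint-atom handling for an a.e.-$t$ BV function. We would choose the good representative of $x^*(t,\cdot)$ so that $x^*(t,l_0^+)$ is its right limit. Then $|x^*(t,l_0^+)|\le\|x^*(t,\cdot)\|_{L^\infty}$ holds, because BV right limits are essential limits. With that in place, the estimate is routine.
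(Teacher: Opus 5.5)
Your proposal is correct and is essentially the paper's proof: you make the same term-by-term estimate of \eqref{eq:Gamma_BV}, bounding the inflow trace, the pairing with $D_lx^*$ by $\TV_\Om(x^*(t,\cdot))$, and the zeroth-order term by $\|x^*(t,\cdot)\|_{L^1}$, with the coefficient factors controlled by the uniform state bounds and \eqref{eq:adjoint_assumptions} on $[0,M_T]$. Your extra remarks (compactness of the coefficient range, trace bounded by the $L^\infty$ norm, no atom at $l_0$ so the boundary term is not counted twice) only make explicit what the paper leaves implicit. Your constant tally is slightly off: it should be $6KC_T$ in the worst case, or $5KC_T$ if you use the sharper bound $K$ for the zeroth-order coefficient, which is immaterial.
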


The adjoint used below is
\begin{equation}\label{eq:adjoint_BV}
-\partial_t\lambda-g(E^*,l)\partial_l\lambda
=
c\,u^*-\big(r+\mu(E^*,l)+u^*\big)\lambda+\chi(l)\mathcal G_{x^*}[\lambda](t),
\qquad
\lambda(T,\cdot)=0,\ \lambda(t,l_m)=0.
\end{equation}

\begin{assumption}[Continuous transient adjoint]
\label{ass:adjoint_exists}
The nonlocal adjoint equation \eqref{eq:adjoint_BV} admits a unique solution
$\lambda^*\in C([0,T];C(\Om))$, absolutely continuous along adjoint
characteristics.
\end{assumption}

Assumption~\ref{ass:adjoint_exists} is what \eqref{eq:costate_regularity} encodes;
we state it as a hypothesis rather than a theorem because, as
Remark~\ref{rmk:costate_discontinuity} shows, it can fail under the spatial-$BV$
control class and we do not prove a coefficient/control condition that forces it.
Under Assumption~\ref{ass:adjoint_exists} the functional
$\mathcal G_{x^*}[\lambda^*]$ pairs the continuous $\lambda^*$ with the finite
measure $D_lx^*$ and is well defined (Lemma~\ref{lem:Gamma_bounded}), and the
gradient representation and finite-horizon single-crossing results below are
valid. We record what a genuine proof would require.

\begin{remark}[Toward a non-assumed continuous adjoint]
\label{rmk:adjoint_sufficient}
A genuine well-posedness theorem would assume a property of the coefficients and of
$u^*$ ensuring that, for every prescribed scalar source $\gamma\in C([0,T])$, the
local terminal-value transport solution $\lambda[\gamma]$ lies in
$C([0,T];C(\Om))$ and that $\gamma\mapsto\lambda[\gamma]$ is bounded
$C([0,T])\to C([0,T];C(\Om))$; the contraction
$\gamma\mapsto\mathcal G_{x^*}[\lambda[\gamma]]$ (which has a short-time factor
from the characteristic interval, by Lemma~\ref{lem:Gamma_bounded}) would then
deliver a continuous $\lambda^*$. The obstruction is exactly the carriage of a
jump of $c\,u^*$ along an adjoint characteristic to a fixed terminal size
(Remark~\ref{rmk:costate_discontinuity}); a noncharacteristic-transversality
condition on the switching curves of $u^*$ removes it, but verifying such a
condition is a regularity question for the optimal control that we do not settle
here. Recent structured-population work likewise emphasizes that differentiability
and regularity of these solution operators are delicate
\cite{diekmann2025age}.
\end{remark}

\begin{remark}[The costate need not be continuous in size; necessity of \eqref{eq:costate_regularity}]
\label{rmk:costate_discontinuity}
A uniform spatial-$BV$ bound on $u^*$ does not by itself force
$\lambda^*(t,\cdot)\in C(\Om)$. Take constant speed $g\equiv1$ on $\Om$ and the
one-switch control $u^*(s,l)=\mathbf 1_{\{l-s>\ell_c\}}$, which has spatial
variation $1$ for every $s$, hence lies in $\UadBV$. Along the adjoint
characteristic $l(s)=l+s-t$ one has $u^*(s,l(s))=\mathbf 1_{\{l-t>\ell_c\}}$,
constant in $s$ but discontinuous as a function of the terminal size $l$ at
$l=t+\ell_c$. The characteristic integral defining $\lambda^*(t,l)$ therefore
inherits a jump at $l=t+\ell_c$, so $\lambda^*(t,\cdot)\notin C(\Om)$ in general.
Consequently \eqref{eq:costate_regularity} is a genuine restriction, not a
consequence of the admissible class. It can be guaranteed by, e.g.,
restricting to controls continuous in size, or by a noncharacteristic-transversality
condition on the switching curves of $u^*$ (so that no jump of $u^*$ is carried
undamped along an adjoint characteristic to a fixed terminal size). When
\eqref{eq:costate_regularity} fails, the finite-horizon costate must be handled in
$BV(\Om)$ with a precise representative and a measure--$BV$ pairing in place of
\eqref{eq:Gamma_BV}; we do not develop that here, and the affected results
(Definition~\ref{def:Gamma_BV}, Assumption~\ref{ass:adjoint_exists},
continuity of $S^*$, and the finite-horizon single-crossing formulation) are
stated under \eqref{eq:costate_regularity}. None of the stationary results of
Section~\ref{sec:stationary_switching} depends on it.
\end{remark}

\subsection{First variation}
\label{subsec:adjoint_identity}

\begin{proposition}[Adjoint representation]
\label{prop:gradient_formula}
For every bounded feasible direction $h$ of finite spatial variation,
\[
J_T'(u^*)h
=
\int_0^T e^{-rt}\int_\Om x^*(t,l)S^*(t,l)h(t,l)\dd l\dd t,
\qquad
S^*(t,l):=c(l)-\lambda^*(t,l).
\]
\end{proposition}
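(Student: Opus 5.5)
We will prove the representation by a duality argument between the linearized state $z$ of Proposition~\ref{prop:state_differentiability} and the costate $\lambda^*$ of Assumption~\ref{ass:adjoint_exists}.

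\textbf{Step 1: differentiate the payoff.} Differentiating $J_T$ along $u^*+\varepsilon h$ and using the strong convergence of difference quotients in $C([0,T];L^1)$ (Proposition~\ref{prop:state_differentiability}) together with $c\in C^1(\Om)$ and the boundedness of $u^*,h$ gives
\[
J_T'(u^*)h=\int_0^T\!\!\int_\Om e^{-rt}c\,\big(h\,x^*+u^*z\big)\dd l\dd t .
\]
The remaining task is to show that
\[
\int_0^T\!\!\int_\Om e^{-rt}c\,u^*z=-\int_0^T\!\!\int_\Om e^{-rt}\lambda^*hx^* .
\]

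\textbf{Step 2: pair $z$ with the present-value costate.} We test the weak form of \eqref{eq:linearized_state}--\eqref{eq:linearized_boundary} against $e^{-rt}\lambda^*$. Formally, we integrate $\partial_t(e^{-rt}\lambda^*z)+\partial_l(g\,e^{-rt}\lambda^*z)$ over $(0,T)\times\Om$. The terminal and initial terms vanish because $\lambda^*(T)=0$ and $z(0)=0$. The outflow term vanishes because $\lambda^*(t,l_m)=0$.

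The inflow term contributes $e^{-rt}\lambda^*(t,l_0)\,g z(t,l_0)=-e^{-rt}\lambda^*(t,l_0)\partial_Eg(E^*,l_0)x^*(t,l_0^+)\delta E$. This is exactly the first boundary piece of $\mathcal G_{x^*}[\lambda^*]\,\delta E$ in \eqref{eq:Gamma_BV}.

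The interior $\delta E$ terms are $-\partial_l(\partial_Eg\,x^*)\delta E-\partial_E\mu\,x^*\delta E$. Paired with $\lambda^*$, and after the $BV$ product rule $D_l(G_tx^*)=G_tD_lx^*+\partial_lG_t\,x^*\dd l$, they reproduce the remaining two pieces of $\mathcal G_{x^*}[\lambda^*](t)\,\delta E(t)$. Writing $\delta E=\int\chi z$ then turns the whole nonlocal contribution into $\int\!\!\int e^{-rt}\chi\,\mathcal G_{x^*}[\lambda^*]\,z$. This cancels against the $\chi\mathcal G$ source in \eqref{eq:adjoint_BV}.

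The local part of \eqref{eq:adjoint_BV} supplies $c\,u^*z$ minus the mortality terms. The discount term $-r\lambda$ is absorbed by $\partial_t e^{-rt}$. What remains is $-\int\!\!\int e^{-rt}hx^*\lambda^*$, which combined with Step 1 gives the claim with $S^*=c-\lambda^*$.

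\textbf{Main obstacle: justifying the pairing at low regularity.} Here $z$ is only a weak solution, $\lambda^*$ is only continuous and absolutely continuous along characteristics, and $D_lx^*$ is a measure. We would therefore not integrate by parts directly. Instead, we would work along characteristics.

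First, we use the representation of $\lambda^*$ by integrating the adjoint along $\eta_{E^*}$, and the analogous Duhamel representation of $z$. Here $z$ is a frozen-$E^*$ transport solution with source $f=-\partial_l(\partial_Eg\,x^*)\delta E-\partial_E\mu\,x^*\delta E-hx^*$ and inflow datum from \eqref{eq:linearized_boundary}.

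Second, for a frozen linear transport problem with a bounded measure source in $l$, the duality identity against a continuous, characteristic-wise absolutely continuous test function can be obtained by mollifying $x^*$ in $l$. We would use strict $BV$ convergence, so that $\int\lambda^*\,\partial_Eg\,D_l(x^*_\delta)\to\int\lambda^*\,\partial_Eg\,D_lx^*$ by continuity of $\lambda^*$, which is hypothesis \eqref{eq:costate_regularity}. We would also approximate $\lambda^*$ by smooth test functions admissible in Definition~\ref{def:weak}-type identities.

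The boundary trace $x^*(t,l_0^+)$ is handled via the Green identity of Lemma~\ref{lem:green_trace}. The outflow trace is killed by $\lambda^*(\cdot,l_m)=0$. Lemma~\ref{lem:Gamma_bounded} controls the nonlocal term uniformly in the regularization, so the limits pass.
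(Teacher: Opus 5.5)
Your proposal is correct and follows the same route as the paper. You test the linearized equation against the present-value costate $e^{-rt}\lambda^*$ and integrate by parts, so that the terminal, initial, outflow and linearized-inflow conditions remove the boundary terms. You then collect the $\delta E$ contributions (the inflow piece plus the $BV$ product-rule pieces) into $\mathcal G_{x^*}[\lambda^*]\,\delta E$, cancel them and all other $z$-terms against \eqref{eq:adjoint_BV}, and combine the leftover $-\int_0^T\!\int_\Om e^{-rt}\lambda^* h x^*$ with the direct term $\int_0^T\!\int_\Om e^{-rt}c\,h\,x^*$. Your explicit differentiation of the payoff and your regularization sketch (characteristic representations, and mollification of $x^*$ with strict $BV$ convergence under \eqref{eq:costate_regularity}) go beyond the paper, which does only the formal integration by parts, but they do not change the argument.
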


\subsection{Variational inequality and bang--bang}
\label{subsec:variational_inequality}

\begin{theorem}[First-order variational inequality]
\label{thm:variational_inequality}
For optimal $u^*\in\UadBV$, with $S^*=c-\lambda^*$,
\begin{equation}\label{eq:variational_inequality}
\int_0^T e^{-rt}\int_\Om x^*S^*\big(v-u^*\big)\dd l\dd t\le0
\qquad\text{for all }v\in\UadBV.
\end{equation}
\end{theorem}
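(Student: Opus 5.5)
The inequality \eqref{eq:variational_inequality} follows from the standard convex-set argument. I will combine the convexity of $\UadBV$ (Lemma~\ref{lem:UadBV_compact}) with the gradient representation of Proposition~\ref{prop:gradient_formula}.

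Fix $v\in\UadBV$ and set $h:=v-u^*$. Since both $v$ and $u^*$ lie in $\UadBV$, the direction $h$ is bounded by $u_{\max}$ in absolute value, and $\operatorname*{ess\,sup}_t\TV_\Om(h(t,\cdot))\le 2C_u$. So $h$ is a bounded direction of finite spatial variation. It is also feasible: by convexity, $u^\varepsilon:=u^*+\varepsilon h=(1-\varepsilon)u^*+\varepsilon v\in\UadBV$ for every $\varepsilon\in[0,1]$. Optimality of $u^*$ over $\UadBV$ then gives $J_T(u^\varepsilon)\le J_T(u^*)$ for all such $\varepsilon$. Dividing by $\varepsilon>0$ yields
\[
\frac{J_T(u^*+\varepsilon h)-J_T(u^*)}{\varepsilon}\le 0 .
\]

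The key step is to show that this difference quotient converges to $J_T'(u^*)h$ as $\varepsilon\downarrow0$. Write the numerator as
\[
\int_0^T\!\!\int_\Om e^{-rt}c\Big[h\,x^\varepsilon+u^*\,\frac{x^\varepsilon-x^*}{\varepsilon}\Big]\dd l\dd t .
\]
For the first term, Proposition~\ref{prop:continuous_dependence} (uniform bounds hold since all $u^\varepsilon\in\UadBV$) gives $x^\varepsilon\to x^*$ in $C([0,T];L^1)$. Because $c$ and $h$ are bounded, $\int e^{-rt}c\,h\,x^\varepsilon\to\int e^{-rt}c\,h\,x^*$. For the second term, Proposition~\ref{prop:state_differentiability} gives $(x^\varepsilon-x^*)/\varepsilon\to z$ in $C([0,T];L^1)$, and $c\,u^*$ is bounded, so the term converges to $\int e^{-rt}c\,u^*z$. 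The limit is therefore the directional derivative
\[
J_T'(u^*)h=\int_0^T\!\!\int_\Om e^{-rt}c\,(h x^*+u^*z)\dd l\dd t .
\]
Proposition~\ref{prop:gradient_formula} identifies this with $\int_0^T e^{-rt}\int_\Om x^*S^*h$, which is where Assumption~\ref{ass:adjoint_exists}, i.e.\ \eqref{eq:costate_regularity}, enters. Passing to the limit in the inequality above gives $\int_0^T e^{-rt}\int_\Om x^*S^*(v-u^*)\le0$.

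The main obstacle is making sure the hypotheses of Propositions~\ref{prop:state_differentiability} and \ref{prop:gradient_formula} really cover $h=v-u^*$ and the one-sided limit $\varepsilon\downarrow0$. Feasibility is what restricts us to one-sided variations and hence to an inequality rather than an equation. I do not expect to need more than this, because both propositions are stated for bounded feasible directions of finite spatial variation. The remaining care is to use only the one-sided quotient and the uniform $\UadBV$ bounds, so that the constants in the continuous-dependence estimate do not degenerate as $\varepsilon\to0$.
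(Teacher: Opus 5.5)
Your proposal is correct and follows the paper's own proof. The paper likewise uses convexity of $\UadBV$ to make $u^*+\varepsilon(v-u^*)$ admissible, divides the optimality inequality by $\varepsilon>0$, lets $\varepsilon\downarrow0$, and applies Proposition~\ref{prop:gradient_formula}; you additionally spell out why the difference quotient converges, via Propositions~\ref{prop:continuous_dependence} and~\ref{prop:state_differentiability}, a step the paper leaves implicit.
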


If the variation budget is active, \eqref{eq:variational_inequality} cannot in
general be reduced to a pointwise rule, and one writes the optimality condition
abstractly as $e^{-rt}x^*S^*\in N_{\UadBV}(u^*)$, the normal cone for the
maximization convention.

\begin{theorem}[Bang--bang under an inactive constraint]
\label{thm:bang_bang}
Suppose the variation constraint is inactive with a uniform margin,
\begin{equation}\label{eq:inactive_BV}
\operatorname*{ess\,sup}_{t}\TV_\Om(u^*(t,\cdot))\le C_u-\eta,\qquad\eta>0.
\end{equation}
Then for a.e.\ $(t,l)$,
$u^*(t,l)\in\argmax_{v\in[0,u_{\max}]}v\,x^*(t,l)S^*(t,l)$, and where
$x^*(t,l)>0$,
\begin{equation}\label{eq:bang_bang_rule}
u^*(t,l)=
\begin{cases}
0, & S^*(t,l)<0,\\
u_{\max}, & S^*(t,l)>0.
\end{cases}
\end{equation}
On the singular set $\Sigma:=\{S^*=0\}$ the first-order condition does not
determine $u^*$.
\end{theorem}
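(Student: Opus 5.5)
The plan is to derive the pointwise rule from the variational inequality of Theorem~\ref{thm:variational_inequality} by a localized needle-type perturbation. The uniform margin \eqref{eq:inactive_BV} is what keeps such perturbations admissible. Throughout, I write $w(t,l):=e^{-rt}x^*(t,l)S^*(t,l)$. This function lies in $L^1((0,T)\times\Om)$, because $x^*$ is bounded (Theorem~\ref{thm:wellposed}) and $S^*=c-\lambda^*$ is bounded by \eqref{eq:costate_regularity}.

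\textbf{Step 1: admissible localized competitors.} The goal is to show $w(u_{\max}-u^*)\le0$ and $w\,u^*\ge0$ a.e. Suppose the first fails. Then there is a set $P$ of positive measure on which $w>0$ and $u^*<u_{\max}$. By Lebesgue density, I pick a product cell $Q=I\times J$, with $J$ a single interval, such that $P\cap Q$ occupies most of $Q$. The competitor is $v:=u^*+\theta(u_{\max}-u^*)\mathbf 1_Q$ with $\theta\in(0,1]$.

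Box feasibility is immediate, since $v$ is a convex combination of $u^*$ and $u_{\max}$ on $Q$. The variation is the delicate point. On $J$ one has $v=(1-\theta)u^*+\theta u_{\max}$, so the interior variation is only $(1-\theta)\TV_J(u^*)\le\TV_J(u^*)$. The two new jumps at $\partial J$ each have size at most $\theta u_{\max}$. Hence for a.e.\ $t$,
\[
\TV_\Om(v(t,\cdot))\le\TV_\Om(u^*(t,\cdot))+2\theta u_{\max}\le C_u-\eta+2\theta u_{\max},
\]
and $v\in\UadBV$ once $\theta\le\eta/(2u_{\max})$. This bound is uniform in the cell, which is exactly why a uniform margin is assumed.

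\textbf{Step 2: contradiction via the variational inequality.} Inserting $v$ into \eqref{eq:variational_inequality} gives
\[
\theta\int_Q w\,(u_{\max}-u^*)\le0 .
\]
This should be impossible when $Q$ is chosen so that the integral is positive. Choosing such a $Q$ is the step I expect to be the main obstacle: $Q$ is a rectangle, whereas $P$ may be irregular. I would first handle it with a Lebesgue point argument for the $L^1$ function $w(u_{\max}-u^*)$, taken over rectangles shrinking to a point. Differentiation of the integral holds for rectangles of bounded eccentricity, for instance squares, so at a density point of $P$ the average of $w(u_{\max}-u^*)$ over small squares tends to its positive value there. This yields the contradiction.

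The symmetric competitor $v=(1-\theta)u^*$ on $Q$ is simpler. It has no new jump larger than $\theta u_{\max}$ at $\partial J$, and it scales the interior variation down. With it, the same argument gives $w\,u^*\ge0$ a.e.

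\textbf{Step 3: conclusion.} From $w(u_{\max}-u^*)\le0$ and $w\,u^*\ge0$ a.e.\ we get $u^*\in\argmax_{v\in[0,u_{\max}]}v\,w$ a.e. Dividing by $e^{-rt}>0$ gives the stated argmax form. Where $x^*>0$, the sign of $w$ equals the sign of $S^*$, which gives \eqref{eq:bang_bang_rule}. On $\Sigma$ we have $w\equiv0$, so every value in $[0,u_{\max}]$ maximizes and the first-order condition leaves $u^*$ undetermined there.
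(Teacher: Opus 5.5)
Your proposal is correct and follows the paper's own route. Like the paper, you use one-sided localized bumps toward $u_{\max}$ and toward $0$, which stay in $\UadBV$ because each adds at most $2\theta u_{\max}$ of spatial variation against the margin $\eta$, and then localize the variational inequality at Lebesgue points to obtain the pointwise arg-max; you simply write out the feasibility estimate and the Lebesgue-differentiation step that the paper's proof compresses into one line.
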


\begin{remark}[Active constraint; singular arcs]
\label{rmk:active_variation}
If the budget is active, the normal cone contains a contribution from the $BV$
constraint and \eqref{eq:bang_bang_rule} need not follow; the general condition
remains \eqref{eq:variational_inequality}. On $\Sigma$, determining the control
requires higher-order or structural arguments, which we do not pursue.
\end{remark}

\subsection{Bang--bang does not imply a threshold}
\label{subsec:bang_bang_not_threshold}

With $\mathcal H_t:=\{l:S^*(t,l)>0\}$, the bang--bang rule gives
$u^*(t,\cdot)=u_{\max}\mathbf 1_{\mathcal H_t}$ off $\Sigma$, but $\mathcal H_t$
may be several disjoint intervals. A minimum-size policy requires the stronger
$\mathcal H_t=(L^*(t),l_m]$. Thus
\[
\text{inactive $BV$ constraint}+\text{maximum principle}
\ \Longrightarrow\
\text{bang--bang harvesting,}
\]
and the threshold conclusion needs the single-crossing analysis of
Section~\ref{sec:threshold}. Under the size-continuity hypothesis
\eqref{eq:costate_regularity}, $S^*=c-\lambda^*$ is continuous in $l$, and the
single-crossing geometry is then the only additional hypothesis; absent
\eqref{eq:costate_regularity}, $S^*(t,\cdot)$ may jump (Remark~\ref{rmk:costate_discontinuity}),
and the threshold formulation must be read for a precise representative of
$S^*$.

% ============================================================================
\section{Bang--bang versus threshold harvesting: the single-crossing condition}
\label{sec:threshold}

Under an inactive constraint, Theorem~\ref{thm:bang_bang} gives the bang--bang law
\eqref{eq:bang_bang_rule}; we now ask when it is a minimum-size policy. The
distinction is
\[
\begin{aligned}
\text{bang--bang}
&\ \Longleftrightarrow\ u^*(t,l)\in\{0,u_{\max}\}\ \text{a.e.},\\
\text{minimum-size}
&\ \Longleftrightarrow\ u^*(t,l)=u_{\max}\mathbf 1_{\{l>L^*(t)\}}\ \text{a.e.}
\end{aligned}
\]

\subsection{Upward single crossing}
\label{subsec:single_crossing_definition}

\begin{definition}[Weak upward single crossing]
\label{def:weak_single_crossing}
$S:\Om\to\R$ has the weak upward single-crossing property if the ordered sign
condition
\begin{equation}\label{eq:ordered_sign}
l_1<l_2\ \Longrightarrow\ \operatorname{sgn}S(l_1)\le\operatorname{sgn}S(l_2),
\qquad\operatorname{sgn}S\in\{-1,0,1\},
\end{equation}
holds; equivalently, $\{l:S(l)<0\}$ is a lower set and $\{l:S(l)>0\}$ is an
upper set.
\end{definition}

\begin{remark}[Why the one-sided implication is insufficient]
\label{rmk:single_crossing_pitfall}
The condition $S(l_1)>0\Rightarrow S(l_2)\ge0$ for $l_1<l_2$ does not
characterize single crossing: a continuous $S$ that is positive on $(l_0,a)$ and
then drops to and stays at $0$ satisfies it, yet $\{S>0\}=(l_0,a)$ is not an upper
set, and the sign pattern is positive-then-zero rather than
negative$\,|\,$zero$\,|\,$positive. Requiring both that $\{S<0\}$ be a
lower set and that $\{S>0\}$ be an upper set rules this out and is what
Lemma~\ref{lem:sign_decomposition} uses.
\end{remark}

\begin{definition}[Strict upward single crossing]
\label{def:strict_single_crossing}
A continuous $S$ has the strict property if either $S<0$ on $\Om$, or $S>0$ on
$\Om$, or there is a unique $L\in(l_0,l_m)$ with $S<0$ on $(l_0,L)$, $S(L)=0$,
$S>0$ on $(L,l_m)$.
\end{definition}

\begin{assumption}[Single crossing of $S^*$]
\label{ass:single_crossing}
For a.e.\ $t$, $l\mapsto S^*(t,l)$ is continuous and weakly upward single crossing.
\end{assumption}

Assumption~\ref{ass:single_crossing} does not follow from the maximum principle;
its validity depends on $c$, the state, the rates, and the full nonlocal adjoint.
In the finite-horizon problem we retain it as an explicit hypothesis; the results
that avoid it are the stationary ones of
Section~\ref{sec:stationary_switching}, unconditional only within the stationary
canonical system.

\subsection{Geometry and threshold structure}
\label{subsec:single_crossing_geometry}

For fixed $t$, let $L_-(t):=\sup\{l:S^*(t,l)<0\}$, $L_+(t):=\inf\{l:S^*(t,l)>0\}$
(with $\sup\varnothing=l_0$, $\inf\varnothing=l_m$).

\begin{lemma}[Sign decomposition]
\label{lem:sign_decomposition}
Let $S^*(t,\cdot)$ be continuous and weakly upward single crossing in the sense of
Definition~\ref{def:weak_single_crossing}. Then $L_-(t)\le L_+(t)$ and
\begin{equation}\label{eq:sign_decomp}
S^*(t,l)<0\ (l<L_-),\qquad
S^*(t,l)=0\ (L_-<l<L_+),\qquad
S^*(t,l)>0\ (l>L_+),
\end{equation}
with the endpoints determined as follows: if both sign sets $\{S^*<0\}$ and
$\{S^*>0\}$ are nonempty, continuity gives $S^*(L_-)=S^*(L_+)=0$; if one sign set
is empty, the corresponding endpoint coincides with a domain endpoint and $S^*$
may take a strict sign there (e.g.\ $L_-=L_+=l_0$ and $S^*(l_0)>0$ when $S^*>0$
throughout, or $L_-=L_+=l_m$ and $S^*(l_m)<0$ when $S^*<0$ throughout). Conversely,
for continuous $S^*$ the existence of such $L_-\le L_+$ with \eqref{eq:sign_decomp}
is equivalent to the ordered sign condition \eqref{eq:ordered_sign}.
\end{lemma}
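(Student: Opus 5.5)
Write $S=S^*(t,\cdot)$ for fixed $t$, with $N:=\{S<0\}$ and $P:=\{S>0\}$. Since $S$ is continuous on the compact interval $\Om$, both $N$ and $P$ are relatively open. The plan is to use Definition~\ref{def:weak_single_crossing} in its set form: $N$ is a lower set and $P$ is an upper set. First we show $N\cap P=\varnothing$, which is trivial. Next we order them: if $l_1\in N$ and $l_2\in P$, then $l_1<l_2$. Indeed, if $l_2\le l_1$, then the ordered sign condition \eqref{eq:ordered_sign} gives $\operatorname{sgn}S(l_2)\le\operatorname{sgn}S(l_1)$, i.e.\ $1\le-1$, which is absurd.

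Passing to $\sup N$ and $\inf P$ gives $L_-\le L_+$ whenever both sets are nonempty. The empty cases are handled by the conventions. If $N=\varnothing$, then $L_-=l_0\le L_+$. If $P=\varnothing$, then $L_+=l_m\ge L_-$.

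We then verify the three regions of \eqref{eq:sign_decomp}.
\begin{itemize}
\item For $l<L_-$, pick $l'\in N$ with $l'>l$, which is possible by the definition of the supremum. The lower-set property then puts $l\in N$.
\item Symmetrically, $l>L_+$ implies $l\in P$.
\item For $L_-<l<L_+$ we have $l\notin N$, since otherwise $l\le\sup N=L_-$. Likewise $l\notin P$, so $S(l)=0$.
\end{itemize}

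For the endpoints, suppose both sets are nonempty. Then $L_-$ is a limit of points of $N$, so continuity gives $S(L_-)\le0$. If $S(L_-)<0$, openness of $N$ would produce points of $N$ beyond $L_-$ unless $L_-=l_m$. But $L_-=l_m$ is impossible, because $P$ is nonempty, open, and lies strictly above every point of $N$. Hence $S(L_-)=0$, and the argument for $S(L_+)=0$ is symmetric. If one set is empty, say $N=\varnothing$, then $L_-=l_0$. If moreover $S>0$ everywhere, then $L_+=\inf P=l_0$, and $S(l_0)>0$ is permitted, which matches the stated exceptional cases. The case $P=\varnothing$ is the mirror image.

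For the converse, assume $L_-\le L_+$ and the pattern \eqref{eq:sign_decomp}. Take $l_1<l_2$ and suppose $\operatorname{sgn}S(l_1)>\operatorname{sgn}S(l_2)$. This forces either $S(l_1)>0$ with $S(l_2)\le0$, or $S(l_1)=0$ with $S(l_2)<0$. The pattern places points with $S>0$ in $[L_+,l_m]$, points with $S=0$ in $[L_-,L_+]$, and points with $S<0$ in $[l_0,L_-]$; this includes endpoints, using continuity where a boundary point carries a strict sign. In each case this yields $l_2\le l_1$, a contradiction.

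The main obstacle is the careful bookkeeping at the boundary points $L_\pm$ when they coincide with $l_0$ or $l_m$. We handle these by case analysis on the emptiness of $N$ and $P$, as above. Everything else is elementary order-theoretic reasoning together with continuity.
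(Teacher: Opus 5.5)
Your proof is correct and follows essentially the same route as the paper's. Both arguments use the lower/upper-set structure of $\{S<0\}$ and $\{S>0\}$, use the ordered sign condition to place every negative point below every positive one (so $L_-\le L_+$), use continuity for the endpoint zeros, and check the converse directly. Your version is slightly more careful in two places: you rule out $S(L_-)<0$ via openness of $\{S<0\}$, whereas one-sided limits of negative values alone give only $S(L_-)\le0$; and you avoid the paper's assertion $\{S<0\}=[l_0,L_-)$, which fails when $S<0$ on all of $\Om$.
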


\begin{theorem}[Weak threshold structure]
\label{thm:weak_threshold_structure}
Under Theorem~\ref{thm:bang_bang} and Assumption~\ref{ass:single_crossing}, for
a.e.\ $t$ exactly one of the following holds, where $x^*(t,\cdot)>0$:
\begin{enumerate}[label=\textnormal{(\roman*)}]
\item $S^*(t,\cdot)\le0$ ($\{S^*>0\}=\varnothing$): $u^*(t,\cdot)=0$
off the zero set $\{S^*(t,\cdot)=0\}$;
\item $S^*(t,\cdot)\ge0$ ($\{S^*<0\}=\varnothing$):
$u^*(t,\cdot)=u_{\max}$ off the zero set;
\item both signs occur: with $l_0<L_-(t)\le L_+(t)<l_m$,
\[
u^*(t,l)=
\begin{cases}
0, & l<L_-(t),\\
u_{\max}, & l>L_+(t),
\end{cases}
\]
and $u^*$ undetermined on the singular interval
$(L_-(t),L_+(t))\subseteq\{S^*(t,\cdot)=0\}$, on whose endpoints $S^*=0$.
\end{enumerate}
\end{theorem}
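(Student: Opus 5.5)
The plan is to combine the pointwise bang--bang rule of Theorem~\ref{thm:bang_bang} with the sign decomposition of Lemma~\ref{lem:sign_decomposition}. The work is entirely a.e.\ in time, so first fix a full-measure set of times $t$ on which three things hold simultaneously. The first is the a.e.\ conclusion of Theorem~\ref{thm:bang_bang}, namely $u^*(t,l)\in\argmax_{v}v\,x^*S^*$ for a.e.\ $l$. The second is Assumption~\ref{ass:single_crossing}, so that $S^*(t,\cdot)$ is continuous and weakly upward single crossing. The third is that the measurable slice $l\mapsto u^*(t,l)$ is well defined. Obtaining these slicewise statements needs a Fubini argument, because Theorem~\ref{thm:bang_bang} is stated for a.e.\ $(t,l)$; this only produces "for a.e.\ $t$, for a.e.\ $l$", which is all that is claimed.

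Next, for such a $t$, consider the two sign sets $\{S^*<0\}$ and $\{S^*>0\}$ and split into cases according to which of them are empty. If both are empty then $S^*\equiv0$, and this falls under (i) and (ii) simultaneously. This is a degenerate overlap, and "exactly one" should be read with that case assigned to either label, or I would note that the three alternatives exhaust the possibilities and overlap only when $S^*\equiv0$.

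\begin{itemize}
\item \emph{Case (i), $\{S^*>0\}=\varnothing$.} Then $S^*<0$ off the zero set. Where $x^*>0$, rule \eqref{eq:bang_bang_rule} gives $u^*=0$.
\item \emph{Case (ii), $\{S^*<0\}=\varnothing$.} Symmetrically, $u^*=u_{\max}$ off the zero set.
\item \emph{Case (iii), both sets nonempty.} By Lemma~\ref{lem:sign_decomposition}, $L_-\le L_+$, $S^*(L_\pm)=0$, and \eqref{eq:sign_decomp} holds. Since $\{S^*<0\}$ is a nonempty relatively open lower set, it is a nonempty interval $[l_0,L_-)$, so $L_->l_0$. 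Similarly $\{S^*>0\}=(L_+,l_m]$ gives $L_+<l_m$. Applying \eqref{eq:bang_bang_rule} on $(l_0,L_-)$ and on $(L_+,l_m)$ gives the displayed formula. On $(L_-,L_+)$ we have $S^*=0$, so this interval lies in $\Sigma$, where Theorem~\ref{thm:bang_bang} leaves $u^*$ undetermined.
\end{itemize}

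The main subtlety is the strict inequalities $l_0<L_-$ and $L_+<l_m$ in case (iii). I would derive them from continuity: if $S^*(l_1)<0$ for some $l_1$, then $S^*<0$ on a neighborhood of $l_1$, and by the lower-set property on all of $[l_0,l_1]$. Hence $L_-\ge l_1>l_0$ when $l_1>l_0$, and if $l_1=l_0$ a neighborhood $[l_0,l_0+\delta)$ again gives $L_->l_0$. The argument for $L_+<l_m$ is analogous. Everything else is bookkeeping of null sets.
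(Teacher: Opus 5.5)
Your proposal is correct and follows the paper's own route. The paper's proof likewise combines Lemma~\ref{lem:sign_decomposition} with the bang--bang rule \eqref{eq:bang_bang_rule}, but it takes $L_\pm\in(l_0,l_m)$ in case (iii) directly from the lemma, whereas you re-derive those strict inequalities from continuity. Your Fubini bookkeeping is accurate, and so is your observation that $S^*(t,\cdot)\equiv0$ satisfies both (i) and (ii), so ``exactly one'' must be read modulo this degenerate overlap; the paper's proof spells out neither point.
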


\begin{theorem}[Strict minimum-size policy]
\label{thm:strict_threshold_structure}
Under Theorem~\ref{thm:bang_bang}, if $S^*(t,\cdot)$ is strictly upward single
crossing for a.e.\ $t$, then exactly one holds: (i) $S^*<0$ and $u^*=0$;
(ii) $S^*>0$ and $u^*=u_{\max}$; (iii) there is a unique $L^*(t)\in(l_0,l_m)$ with
$S^*(t,L^*(t))=0$ and
$\displaystyle u^*(t,l)=u_{\max}\mathbf 1_{\{l>L^*(t)\}}$ a.e.
\end{theorem}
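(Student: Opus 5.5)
The argument is a direct reduction to the bang--bang rule of Theorem~\ref{thm:bang_bang}, combined with the trichotomy in Definition~\ref{def:strict_single_crossing}. Fix a time $t$ in the full-measure set where three things hold: the bang--bang rule \eqref{eq:bang_bang_rule} is valid for a.e.\ $l$, $x^*(t,\cdot)>0$ a.e.\ on $\Om$ (as in Theorem~\ref{thm:weak_threshold_structure}), and $S^*(t,\cdot)$ is continuous and strictly upward single crossing. Since both full-measure sets are defined through a.e.\ statements in $(t,l)$, I would invoke Fubini to pass to a.e.\ $t$ with a.e.\ $l$.

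\textbf{Cases (i) and (ii).} If $S^*(t,\cdot)<0$ on $\Om$, then \eqref{eq:bang_bang_rule} gives $u^*(t,l)=0$ for a.e.\ $l$. If $S^*(t,\cdot)>0$, it gives $u^*(t,l)=u_{\max}$ a.e. In neither case does the singular set $\Sigma$ meet this time slice.

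\textbf{Case (iii).} Here the definition supplies a unique $L\in(l_0,l_m)$ with $S^*<0$ on $(l_0,L)$, $S^*(L)=0$, and $S^*>0$ on $(L,l_m)$. Set $L^*(t):=L$. The zero set of $S^*(t,\cdot)$ is then contained in $\{l_0,L,l_m\}$, which has Lebesgue measure zero, so the indeterminacy of $u^*$ on $\Sigma$ is irrelevant for an a.e.\ statement. Applying \eqref{eq:bang_bang_rule} on $(l_0,L)$ and on $(L,l_m)$ gives $u^*(t,l)=u_{\max}\mathbf 1_{\{l>L^*(t)\}}$ for a.e.\ $l$. The point $L^*(t)$ is unique as the only zero in the open interval.

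\textbf{Exclusivity.} The three alternatives are mutually exclusive: (i) and (ii) have opposite signs everywhere, and (iii) has both signs present. Their union is exhaustive by the definition. This is consistent with Theorem~\ref{thm:weak_threshold_structure}, of which the present statement is the degenerate case $L_-=L_+$, and Lemma~\ref{lem:sign_decomposition} could be cited as an alternative route.

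The only delicate step is the measure-theoretic bookkeeping: making sure the exceptional $t$-sets are null and that the zero set has measure zero, so that "undetermined on $\Sigma$" never affects the a.e.\ conclusion. Assuming positivity of $x^*$ is needed for \eqref{eq:bang_bang_rule}. If positivity of $x^*$ fails on a set of positive measure, the conclusion must be restricted to $\{x^*>0\}$, as in the weak theorem.
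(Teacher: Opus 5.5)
Your proposal is correct and follows the paper's own proof, which simply combines the trichotomy of Definition~\ref{def:strict_single_crossing} with the bang--bang rule \eqref{eq:bang_bang_rule}. The extra bookkeeping you supply makes explicit what the paper's one-line argument leaves implicit: Fubini to pass to a.e.\ $t$, the observation that in case (iii) the zero set lies in $\{l_0,L,l_m\}$ and is therefore null, and the restriction to $\{x^*>0\}$.
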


\[
\text{bang--bang optimality}+\text{strict upward single crossing}
\ \Longrightarrow\ \text{minimum-size harvesting.}
\]

\subsection{Measurability and regularity}
\label{subsec:threshold_regularity}

\begin{proposition}[Measurability]
\label{prop:threshold_measurable}
If $S^*$ is measurable in $t$, continuous in $l$, and weakly upward single
crossing in $l$ for a.e.\ $t$, then $L_+$ and $L_-$ are measurable.
\end{proposition}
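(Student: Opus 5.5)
The plan is to write $L_+$ and $L_-$ as countable combinations of measurable functions of $t$, using continuity of $S^*(t,\cdot)$ in $l$ to replace the uncountable sup/inf by rational sizes. Fix a countable dense set $Q:=(\Om\cap\mathbb Q)\cup\{l_0,l_m\}$. For each $q\in Q$ the map $t\mapsto S^*(t,q)$ is measurable by hypothesis. This is where joint measurability enters: if $S^*$ is only assumed measurable in $t$ for each fixed $l$, that is exactly what is needed. The exceptional null set of times, where continuity or single crossing fails, is discarded, and $L_\pm$ are set to $l_0$ there.

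For $L_+$ I use the sublevel representation. For $a\in\R$, the claim is
\[
\{t: L_+(t)<a\}=\bigcup_{q\in Q,\ q<a}\{t: S^*(t,q)>0\}.
\]
For the inclusion from left to right: if $L_+(t)<a$, the definition of the infimum gives some $l<a$ with $S^*(t,l)>0$. Continuity in $l$ makes $\{S^*(t,\cdot)>0\}$ relatively open, so it contains a rational $q<a$. The reverse inclusion is immediate from the definition of $L_+$. The convention $\inf\varnothing=l_m$ is handled by noting that for $a>l_m$ the set is all of $(0,T)$, and for $a\le l_m$ the displayed union is correct as written. Each set on the right is measurable and the union is countable, so $L_+$ is measurable.

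For $L_-$ the argument is symmetric:
\[
\{t: L_-(t)>a\}=\bigcup_{q\in Q,\ q>a}\{t:S^*(t,q)<0\},
\]
using openness of $\{S^*(t,\cdot)<0\}$ and $\sup\varnothing=l_0$. Weak single crossing (Lemma~\ref{lem:sign_decomposition}) is not actually needed for measurability itself. It only ensures $L_-\le L_+$, so that the threshold in Theorem~\ref{thm:weak_threshold_structure} is meaningful.

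The only delicate point I expect is the measurability hypothesis on $S^*$. If it is meant as joint measurability on $(0,T)\times\Om$, I will first extract measurability of $t\mapsto S^*(t,q)$ for each fixed $q$. Fubini gives this only for a.e. $l$, not every $q$. I would resolve this either by reading the hypothesis as Carathéodory measurability (measurable in $t$ for each $l$, continuous in $l$), which makes $S^*$ jointly measurable anyway. Alternatively, I would choose the countable dense set $Q$ inside the full-measure set of good $l$'s, which still works because only density and openness are used.
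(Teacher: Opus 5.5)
Your proof is correct and is essentially the paper's argument: the paper writes $\{t:L_+(t)<a\}=\bigcup_{q\in\mathbb Q,\,l_0<q<a}\{t:S^*(t,q)>0\}$ for $a\in\Om$ and says $L_-$ is similar. Your additional bookkeeping only makes explicit what the paper's one-line proof leaves implicit: the empty-set conventions for $a$ outside $\Om$, the exceptional null set of times, the point that single crossing is not needed, and the joint-versus-sectional measurability of $S^*$.
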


\begin{proposition}[Local regularity under transversal crossing]
\label{prop:threshold_regular}
If $S^*\in C^1$ near $(t_0,L^*(t_0))$ with $S^*=0$ and
$\partial_lS^*\neq0$ there, then locally there is a unique $C^1$ trajectory $L^*$
with $S^*(t,L^*(t))=0$ and
\begin{equation}\label{eq:threshold_velocity}
\dot L^*(t)=-\frac{\partial_tS^*(t,L^*(t))}{\partial_lS^*(t,L^*(t))}.
\end{equation}
\end{proposition}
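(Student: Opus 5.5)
This is the implicit function theorem applied to the single equation $F(t,l):=S^*(t,l)=0$ near $(t_0,l_0^\ast)$ with $l_0^\ast:=L^*(t_0)$. We do not need the single-crossing machinery beyond identifying $L^*(t_0)$ as a zero of $S^*(t_0,\cdot)$.

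The steps, in order:

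1. Since $S^*\in C^1$ on a neighbourhood $U=(t_0-\delta,t_0+\delta)\times(l_0^\ast-\rho,l_0^\ast+\rho)$, $S^*(t_0,l_0^\ast)=0$ and $\partial_lS^*(t_0,l_0^\ast)\neq0$, shrink $U$ so that $\partial_lS^*$ has a fixed strict sign on $U$. For definiteness take $\partial_lS^*>0$, which is the upward case relevant to minimum-size harvesting; the other sign is symmetric.

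2. Existence and uniqueness of a continuous zero curve. For each fixed $t$ near $t_0$, the map $l\mapsto S^*(t,l)$ is strictly increasing on $(l_0^\ast-\rho,l_0^\ast+\rho)$. At $t=t_0$ we have $S^*(t_0,l_0^\ast-\rho/2)<0<S^*(t_0,l_0^\ast+\rho/2)$. By continuity in $t$, these strict inequalities persist for $|t-t_0|<\delta'$. The intermediate value theorem then gives a zero $L^*(t)$ in $(l_0^\ast-\rho/2,l_0^\ast+\rho/2)$, and strict monotonicity makes it unique in $U$. Continuity of $t\mapsto L^*(t)$ follows by applying the same sandwich argument with $\rho$ replaced by an arbitrary smaller $\varepsilon$.

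3. Differentiability and the velocity formula. Use the mean value theorem on the segment joining $(t,L^*(t))$ and $(t+h,L^*(t+h))$:
\[
0=S^*(t+h,L^*(t+h))-S^*(t,L^*(t))=\partial_tS^*(\theta)\,h+\partial_lS^*(\theta)\,\big(L^*(t+h)-L^*(t)\big)
\]
at some intermediate point $\theta$. Divide by $h\,\partial_lS^*(\theta)\neq0$ and let $h\to0$; continuity of $L^*$ and of $\nabla S^*$ gives \eqref{eq:threshold_velocity}. The right-hand side of \eqref{eq:threshold_velocity} is a continuous function of $t$, so $L^*\in C^1$.

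4. Link to the threshold. By uniqueness in $U$, when case (iii) of Theorem~\ref{thm:strict_threshold_structure} holds near $t_0$, this curve coincides with the threshold $L^*(t)$ of that theorem. Alternatively, one can simply cite the classical implicit function theorem for steps 2 and 3 together.

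The only delicate point is the uniqueness claim. It is local, confined to the neighbourhood $U$. Identifying the local curve with the globally defined threshold therefore uses uniqueness of the zero from strict single crossing, or the transversal crossing at $t_0$ propagated by continuity. This should be stated explicitly rather than implying global uniqueness.
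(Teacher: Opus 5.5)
Your proposal is correct and takes the same approach as the paper, whose proof is a single appeal to the implicit function theorem; your steps 2--3 just reprove the one-dimensional case by hand (sign persistence, the intermediate value theorem, and the mean value theorem along a segment in the convex rectangle $U$). Your closing caveat is a useful clarification that the paper leaves implicit: uniqueness is only local, so identifying the curve with the global threshold requires the strict single-crossing structure.
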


Equation \eqref{eq:threshold_velocity} is a local identity, not an independent
evolution law, and fails at tangential or multiple crossings.

\subsection{Monotonicity is sufficient but not necessary}
\label{subsec:monotonicity_not_necessary}

\begin{proposition}[Single crossing is strictly weaker than monotonicity]
\label{prop:single_crossing_weaker}
If $S$ is strictly increasing it is strictly upward single crossing whenever it
changes sign. The converse fails.
\end{proposition}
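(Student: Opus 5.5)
The statement has two parts, and I will treat them separately.

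\emph{Forward direction.} Suppose $S$ is strictly increasing on $\Om$ and changes sign, so that $S(l_1)<0<S(l_2)$ for some $l_1<l_2$. I will use the continuity of $S$ that Definition~\ref{def:strict_single_crossing} presupposes. The set $\{S=0\}$ is nonempty by the intermediate value theorem. Strict monotonicity makes it a single point $L$, and $L$ lies in $(l_0,l_m)$: if $L=l_0$, then $S>0$ on $(l_0,l_m]$, contradicting $S(l_1)<0$, and the case $L=l_m$ is symmetric. Again by strict monotonicity, $S<0$ on $(l_0,L)$ and $S>0$ on $(L,l_m)$, which is exactly case three of Definition~\ref{def:strict_single_crossing}. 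If $S$ does not change sign, the property still holds whenever $S$ has a strict sign. The delicate subcase is a single zero at an endpoint, for example $S(l_0)=0$ with $S>0$ afterward. I will either include it by reading ``$S>0$ on $\Om$'' up to the endpoint, or note that the statement is phrased only for sign-changing $S$. Since that is the stated hypothesis, the three-case conclusion holds.

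\emph{Converse fails.} I will give an explicit counterexample that is strictly single crossing but not monotone. I will take $\Om=[0,3]$ after a shift, so $l_0=0$ and $l_m=3$, and define
\[
S(l)=(l-1)\,\big(1+(l-2)^2\big)^{-1}\cdot h(l).
\]
It is simpler to take $S(l)=(l-1)\,w(l)$ with a positive non-monotone weight such as $w(l)=2+\sin(4l)$. Then $\sgn S=\sgn(l-1)$, so $S$ is strictly single crossing with $L=1$. For non-monotonicity, I will take
\[
S(l)=(l-1)e^{-(l-1)}
\]
on $[0,3]$. Its derivative is $(2-l)e^{1-l}$, which is negative on $(2,3)$, so $S$ decreases there while staying positive. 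Its only zero is at $l=1$, with $S<0$ before and $S>0$ after. This gives the counterexample, and the scaling to a general $[l_0,l_m]$ is affine.

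\emph{Main obstacle.} There is little difficulty here. The only care needed is the endpoint-zero edge case in the forward direction and making sure the example has no additional zeros.
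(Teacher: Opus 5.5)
Your proof is correct and follows the same route as the paper: the forward direction uses the intermediate value theorem plus strict monotonicity, which the paper compresses into one line, and the counterexample is a linear factor vanishing at $L$ times a strictly positive weight, where the paper uses $(\theta-\tfrac12)\bigl(1+A\theta(1-\theta)\bigr)$ with $A>2$ (decreasing near $l_0$) and you use $(l-1)e^{-(l-1)}$ on $[0,3]$ (decreasing on $(2,3)$). In a write-up, delete the two abandoned candidates, namely the one with the unspecified factor $h$ and the $2+\sin(4l)$ weight (whose non-monotonicity you never check), and keep only the exponential example.
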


\subsection{A robust single-crossing certificate}
\label{subsec:robust_single_crossing}

\begin{proposition}[Quantitative certificate]
\label{prop:quantitative_single_crossing}
Let $S\in C^1(\Om)$ and suppose there are $L\in(l_0,l_m)$, $\delta,m,\kappa>0$ with
\[
S(l)\le-m,\ l\in[l_0,L-\delta];\quad
S(l)\ge m,\ l\in[L+\delta,l_m];\quad
S'(l)\ge\kappa,\ l\in[L-\delta,L+\delta].
\]
Then $S$ has exactly one zero in $(L-\delta,L+\delta)$, an upward crossing.
Moreover any $\widetilde S\in C^1$ with
$\|\widetilde S-S\|_{L^\infty}<m$ and
$\|\widetilde S'-S'\|_{L^\infty(L-\delta,L+\delta)}<\kappa$ also has exactly one
upward crossing.
\end{proposition}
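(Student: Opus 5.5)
We prove the two assertions of Proposition~\ref{prop:quantitative_single_crossing} separately: first the exact zero count for $S$ itself, then the persistence under the perturbation $\widetilde S$.

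\emph{Zero count for $S$.}
\begin{enumerate}
\item On $[l_0,L-\delta]$ we have $S\le-m<0$, and on $[L+\delta,l_m]$ we have $S\ge m>0$, so no zero of $S$ lies outside $(L-\delta,L+\delta)$.
\item Since $S(L-\delta)\le-m<0<m\le S(L+\delta)$, the intermediate value theorem gives a zero $L^\sharp\in(L-\delta,L+\delta)$.
\item Because $S'\ge\kappa>0$ on $[L-\delta,L+\delta]$, $S$ is strictly increasing there, so this zero is unique.
\item Consequently $S<0$ on $[l_0,L^\sharp)$, $S(L^\sharp)=0$ and $S>0$ on $(L^\sharp,l_m]$, with $S'(L^\sharp)\ge\kappa>0$. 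The crossing is therefore strict upward in the sense of Definition~\ref{def:strict_single_crossing}, and transversal.
\end{enumerate}

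\emph{Perturbation.} Set $\varepsilon:=\|\widetilde S-S\|_{L^\infty}<m$. We check the same three facts for $\widetilde S$.
\begin{enumerate}
\item On $[l_0,L-\delta]$, $\widetilde S\le -m+\varepsilon<0$. On $[L+\delta,l_m]$, $\widetilde S\ge m-\varepsilon>0$. So all zeros of $\widetilde S$ lie in $(L-\delta,L+\delta)$.
\item The endpoint values still satisfy $\widetilde S(L-\delta)<0<\widetilde S(L+\delta)$, which gives existence of a zero by the intermediate value theorem.
\item For $l\in(L-\delta,L+\delta)$, $\widetilde S'(l)\ge S'(l)-|\widetilde S'(l)-S'(l)|>\kappa-\kappa=0$, using the strict inequality $\|\widetilde S'-S'\|_{L^\infty(L-\delta,L+\delta)}<\kappa$. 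Thus $\widetilde S$ is strictly increasing on the closed window by continuity of $\widetilde S$, so the zero is unique.
\end{enumerate}
The same sign decomposition as for $S$ then shows that $\widetilde S$ has exactly one upward crossing on $\Om$.

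\emph{Remaining care.} The only subtle point is that the derivative bound is an essential-supremum bound on an open interval. Since $\widetilde S,S\in C^1$, the bound holds pointwise on the open window by continuity, hence the strict positivity $\widetilde S'>0$ there. Monotonicity on the closed window then follows from continuity of $\widetilde S$ at the endpoints.
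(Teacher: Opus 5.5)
Your proof is correct and follows the same route as the paper's: the value margins exclude zeros outside $(L-\delta,L+\delta)$, the slope bound gives strict monotonicity on the window, the intermediate value theorem supplies the zero, and the perturbation bounds $\|\widetilde S-S\|_{L^\infty}<m$ and $\|\widetilde S'-S'\|_{L^\infty(L-\delta,L+\delta)}<\kappa$ preserve both facts. You also write out details that the paper's one-line proof leaves implicit, notably the passage from essential-supremum bounds to pointwise bounds via continuity.
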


\subsection{Switching geometries and the stationary route}
\label{subsec:switching_geometries}

For fixed $t$, $S^*$ may give: (i) no harvest ($S^*<0$); (ii) full harvest
($S^*>0$); (iii) a minimum-size regime; (iv) a singular band ($S^*\equiv0$ on an
interval); or (v) a multiple-switch regime ($\mathcal H_t$ disconnected). The last
is the central caution: nonlocal feedback can alter the marginal value of one size
class through its effect on all others, reshaping $S^*$ enough to create extra
crossings while the control stays bang--bang. Establishing single crossing
uniformly in time would require control of the sign geometry of the nonlocal
costate, so we retain it as a hypothesis in the time-dependent theory. In the
stationary problem the adjoint is a linear ODE with scalar nonlocal feedback,
which permits the exact decomposition $S=S_{\mathrm{red}}-\Gamma\psi$ developed
next.

% ============================================================================
\section{Stationary nonlocal switching correction}
\label{sec:stationary_switching}

\subsection{Which problem the stationary adjoint belongs to}
\label{subsec:stationary_problem_statement}

The time-independent adjoint studied in this section carries a discount term
$r\lambda$ and therefore is not the adjoint of the finite-horizon problem
$J_T$ of Section~\ref{sec:control_existence}, whose costate satisfies the terminal
condition $\lambda(T,\cdot)=0$ and is generically nonstationary; nor is it the
Lagrange multiplier of the static equilibrium-yield problem
$\max_u\int_\Om c\,u\,x^u$, in which $r$ does not appear (multiplying a permanently
stationary trajectory's value $\int_0^\infty e^{-rt}\mathcal Y(u)\dd t=\mathcal
Y(u)/r$ by $1/r$ does not introduce an $r\lambda$ term). The natural reading is the
infinite-horizon discounted problem
\begin{equation}\label{eq:Jinfty}
J_\infty(u)=\int_0^\infty e^{-rt}\int_\Om c(l)\,u(t,l)\,x(t,l)\dd l\dd t,
\qquad r>0,
\end{equation}
whose stationary canonical extremals are time-independent triples
$(x^*,E^*,u)$ together with a current-value costate $\lambda$ solving the state
equation, the current-value adjoint equation, and the pointwise maximum condition.
Equation \eqref{eq:full_stationary_adjoint} below is exactly the stationary
current-value canonical adjoint of \eqref{eq:Jinfty} (equivalently, a
resolvent/spectral-shift equation with parameter $r\ge0$). This section analyzes
the switching geometry of such stationary canonical extremals; it does not
by itself prove their global optimality, nor that a stationary extremal is optimal
among nonstationary controls. Accordingly we speak throughout of stationary
canonical bang--bang extremals, and a ``minimum-size policy'' conclusion is a
statement about such an extremal, not a proved solution of a dynamic optimization.

\begin{definition}[Stationary canonical candidate]
\label{def:canonical_candidate}
A quadruple $(x^*,E^*,u,\lambda)$ is a stationary canonical candidate if
$E^*=\Phi(E^*)$ with profile $x^*=x_{E^*}$, $\lambda$ solves the stationary
current-value adjoint \eqref{eq:full_stationary_adjoint}, and the maximum
(consistency) condition
\begin{equation}\label{eq:stationary_consistency}
u(l)\in\argmax_{v\in[0,u_{\max}]}v\,x^*(l)\,S(l),
\qquad S:=c-\lambda,
\end{equation}
holds for a.e.\ $l$. The rank-one reduction below is valid for an
arbitrary prescribed stationary $u$; only the control-policy conclusions
(that the harvested set is an upper interval) additionally invoke
\eqref{eq:stationary_consistency}.
\end{definition}

In the stationary problem the nonlocal feedback is scalar, allowing an exact
rank-one reduction of the adjoint. We develop the reduction, small-feedback and
certificate results, and the unification identity $B|_{r=0}=\Phi'(E^*)$.

\subsection{Operating point}
\label{subsec:stationary_operating_point}

Fix a stationary policy $u=u(l)\in[0,u_{\max}]$ and a stationary environment $E^*$
with $E^*=\Phi(E^*)$ and profile $x^*=x_{E^*}$ as in
\eqref{eq:stationary_profile}. Unlike the transient problem, the stationary
profile and stationary costate are absolutely continuous in size: since
$(\bar g x^*)'=-(\bar\mu+u)x^*\in L^\infty(\Om)$ and $\bar g\ge g_{\min}$ one has
$x^*\in W^{1,\infty}(\Om)$, and the stationary terminal-value ODEs below give
$\lambda,\psi\in W^{1,\infty}(\Om)$. We therefore define the stationary feedback
functional directly by its strong form, with no measure pairing and no
appeal to the transient continuity hypothesis \eqref{eq:costate_regularity}. Write
\[
\bar g:=g(E^*,\cdot),\quad
\bar\mu:=\mu(E^*,\cdot),\quad
g_E:=\partial_Eg(E^*,\cdot),\quad
\mu_E:=\partial_E\mu(E^*,\cdot),
\]
and the discounted-mortality coefficient (this is the symbol freed by the
renaming in Section~\ref{sec:stationary}), defined for $r\ge0$ by
\[
a_r(l):=r+\bar\mu(l)+u(l)\ge0,
\qquad\text{with }a_r\ge r>0\text{ for }r>0.
\]
The original discounted control problem has $r>0$; the value $r=0$ is used below
only as the zero-shift limit in the closure-gain identity, and the terminal-value
ODEs remain well posed there because $\bar g\ge g_{\min}>0$. We write $a_r$
throughout to make the dependence of $\psi_r$, $B(r)$, and the spectral shift
explicit, abbreviating $a:=a_r$ when $r$ is fixed.
For $\zeta\in W^{1,\infty}(\Om)$ define the stationary feedback functional by
\begin{equation}\label{eq:stationary_G_smooth}
\mathcal G^*[\zeta]
:=
\int_{l_0}^{l_m}\big(g_E(\xi)\zeta'(\xi)-\mu_E(\xi)\zeta(\xi)\big)x^*(\xi)\dd\xi,
\end{equation}
which is well defined and bounded on $W^{1,\infty}(\Om)$ because
$x^*\in W^{1,\infty}(\Om)\subset L^\infty(\Om)$ and $g_E,\mu_E\in L^\infty(\Om)$.
When $x^*$ is only $BV$ (the transient situation), an integration by parts gives
the equivalent $BV$ representation
\begin{equation}\label{eq:stationary_G_functional}
\mathcal G^*[\zeta]
=
-g_E(l_0)\zeta(l_0)x^*(l_0^+)
-\int_{[l_0,l_m]}\zeta(\xi)g_E(\xi)\,D_lx^*(\dd\xi)
-\int_{l_0}^{l_m}\zeta(\xi)\big(g_E'(\xi)+\mu_E(\xi)\big)x^*(\xi)\dd\xi,
\end{equation}
agreeing with \eqref{eq:stationary_G_smooth} for $x^*\in W^{1,\infty}$; we use the
strong form \eqref{eq:stationary_G_smooth} as the foundation of this section and
regard \eqref{eq:stationary_G_functional} as an equivalent representation. The
full stationary current-value adjoint is
\begin{equation}\label{eq:full_stationary_adjoint}
\bar g(l)\lambda'(l)=a(l)\lambda(l)-c(l)u(l)-\chi(l)\Gamma,
\qquad
\lambda(l_m)=0,
\qquad
\Gamma=\mathcal G^*[\lambda].
\end{equation}

\subsection{Reduced adjoint and response function}
\label{subsec:reduced_auxiliary}

Define $\lambda_{\mathrm{red}}$ and $\psi$ by
\begin{align}
\bar g\lambda_{\mathrm{red}}'&=a\lambda_{\mathrm{red}}-cu,
&\lambda_{\mathrm{red}}(l_m)&=0,
\label{eq:reduced_adjoint}\\
\bar g\psi'&=a\psi-\chi,
&\psi(l_m)&=0,
\label{eq:auxiliary_psi}
\end{align}
with explicit solutions
\begin{equation}\label{eq:lambda_red_formula}
\lambda_{\mathrm{red}}(l)=\int_l^{l_m}\frac{c(s)u(s)}{\bar g(s)}
\exp\!\Big(-\!\int_l^s\frac{a}{\bar g}\Big)\dd s,
\qquad
\psi(l)=\int_l^{l_m}\frac{\chi(s)}{\bar g(s)}
\exp\!\Big(-\!\int_l^s\frac{a}{\bar g}\Big)\dd s,
\end{equation}
both nonnegative for $c,u,\chi\ge0$. Set
$S_{\mathrm{red}}(l):=c(l)-\lambda_{\mathrm{red}}(l)$.

\begin{remark}[Interpretation of $\psi$]
\label{rmk:psi_interpretation}
$\psi$ is the costate response to one unit of the scalar nonlocal source; it
propagates the environmental correction backward from larger sizes. The full
nonlocal effect has fixed shape $\psi$, with magnitude and sign set by the scalar
$\Gamma$.
\end{remark}

\subsection{Exact rank-one correction}
\label{subsec:rank_one_correction}

Define
\begin{equation}\label{eq:A_B_definition}
A:=\mathcal G^*[\lambda_{\mathrm{red}}],
\qquad
B:=\mathcal G^*[\psi],
\end{equation}
with smooth forms
\[
A=\int_{l_0}^{l_m}\!\big(g_E\lambda_{\mathrm{red}}'-\mu_E\lambda_{\mathrm{red}}\big)x^*\dd\xi,
\qquad
B=\int_{l_0}^{l_m}\!\big(g_E\psi'-\mu_E\psi\big)x^*\dd\xi.
\]

\begin{theorem}[Rank-one correction]
\label{thm:rank_one_correction}
If $1-B\neq0$, the full stationary adjoint
\eqref{eq:full_stationary_adjoint} has the unique representation
\[
\lambda=\lambda_{\mathrm{red}}+\Gamma\psi,
\qquad
\Gamma=\frac{A}{1-B},
\]
so that
\begin{equation}\label{eq:rank_one_switching}
S=S_{\mathrm{red}}-\frac{A}{1-B}\psi,
\qquad
S'=S_{\mathrm{red}}'-\frac{A}{1-B}\psi'.
\end{equation}
\end{theorem}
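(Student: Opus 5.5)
The plan is a linear-superposition argument that turns the nonlocal boundary-value problem \eqref{eq:full_stationary_adjoint} into a scalar consistency equation for $\Gamma$. The key fact is that for a \emph{prescribed} scalar $\gamma\in\R$, the local terminal-value problem
\[
\bar g\,\zeta'=a\zeta-cu-\chi\gamma,\qquad \zeta(l_m)=0,
\]
is a linear first-order ODE with bounded coefficients, since $\bar g\ge g_{\min}>0$ and $a,c,u,\chi\in L^\infty$. It therefore has a unique solution $\zeta_\gamma\in W^{1,\infty}(\Om)$, given by variation of constants as in \eqref{eq:lambda_red_formula}.

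\textbf{Superposition.} Linearity of the ODE gives $\zeta_\gamma=\lambda_{\mathrm{red}}+\gamma\psi$. Indeed, $\lambda_{\mathrm{red}}+\gamma\psi$ satisfies the ODE by adding \eqref{eq:reduced_adjoint} and $\gamma$ times \eqref{eq:auxiliary_psi}, and it vanishes at $l_m$. Uniqueness of the terminal-value problem then identifies it with $\zeta_\gamma$.

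\textbf{Existence and representation.} Suppose $\lambda$ solves \eqref{eq:full_stationary_adjoint}, that is, a $W^{1,\infty}$ function with $\lambda(l_m)=0$ satisfying the ODE with $\Gamma=\mathcal G^*[\lambda]$. Setting $\gamma:=\Gamma$ in the previous step forces $\lambda=\lambda_{\mathrm{red}}+\Gamma\psi$. Applying the linear functional $\mathcal G^*$ from \eqref{eq:stationary_G_smooth}, which is well defined on $W^{1,\infty}$, gives the scalar consistency equation
\[
\Gamma=\mathcal G^*[\lambda_{\mathrm{red}}]+\Gamma\,\mathcal G^*[\psi]=A+\Gamma B.
\]
When $1-B\ne0$, this yields $\Gamma=A/(1-B)$. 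Conversely, define $\Gamma:=A/(1-B)$ and $\lambda:=\lambda_{\mathrm{red}}+\Gamma\psi$. Then $\lambda$ solves the ODE with source $\Gamma$, and $\mathcal G^*[\lambda]=A+\Gamma B=\Gamma$, so $\lambda$ solves the full problem. Together these give existence and uniqueness.

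\textbf{Switching function.} Subtracting from $c$ gives $S=c-\lambda=S_{\mathrm{red}}-\frac{A}{1-B}\psi$. Since every function involved is in $W^{1,\infty}$, the derivative identity in \eqref{eq:rank_one_switching} holds almost everywhere.

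\textbf{Main subtlety.} The argument is routine; the only points needing care are regularity and the function class.
\begin{itemize}
\item $\lambda_{\mathrm{red}},\psi\in W^{1,\infty}$ is needed so that $A$ and $B$ are defined through the strong form \eqref{eq:stationary_G_smooth}. This follows from the explicit formulas, because $a,c,u,\chi$ are bounded and $\bar g\in C^1$ is bounded below.
\item Uniqueness must be understood within the class of absolutely continuous solutions, which is where \eqref{eq:stationary_G_smooth} makes sense.
\end{itemize}
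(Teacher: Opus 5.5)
Your proof is correct and follows the paper's argument. Linearity and uniqueness of the terminal-value problem give $\lambda-\lambda_{\mathrm{red}}=\Gamma\psi$, and applying $\mathcal G^*$ yields the scalar closure $\Gamma=A+\Gamma B$; you additionally verify the converse, that $\lambda_{\mathrm{red}}+\frac{A}{1-B}\psi$ actually solves the full problem, which the paper leaves implicit, so you establish existence as well as uniqueness.
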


\begin{remark}[Sherman--Morrison and nonresonance]
\label{rmk:nonresonance}
The closure $\Gamma=A+\Gamma B$ of Theorem~\ref{thm:rank_one_correction} is the
scalar Sherman--Morrison identity \cite{sherman1950adjustment} made precise as follows. Let $L_0$ denote the
reduced terminal-value operator, so that $\lambda_{\mathrm{red}}=L_0^{-1}(cu)$ and
$\psi=L_0^{-1}\chi$, and let $\mathcal G^*$ act as a bounded linear functional.
The full adjoint solves the rank-one-perturbed equation
\[
\big(I-\psi\otimes\mathcal G^*\big)\lambda=\lambda_{\mathrm{red}},
\qquad
(\psi\otimes\mathcal G^*)\lambda:=\psi\,\mathcal G^*[\lambda],
\]
whose inverse is $\big(I-\psi\otimes\mathcal G^*\big)^{-1}
=I+\frac{\psi\otimes\mathcal G^*}{1-\mathcal G^*[\psi]}$, with denominator
$1-\mathcal G^*[\psi]=1-B$. Applying it to $\lambda_{\mathrm{red}}$ reproduces
$\lambda=\lambda_{\mathrm{red}}+\frac{A}{1-B}\psi$ since
$\mathcal G^*[\lambda_{\mathrm{red}}]=A$. If $B=1$ and $A\neq0$ the closure is
inconsistent; if $B=1$ and $A=0$ the adjoint is algebraically nonunique. Hence
uniqueness forces $B\neq1$. The correction is rank one for every $B\neq1$; rank
one does not mean small, since $|\Gamma|=|A|/|1-B|$ can be large near resonance.
\end{remark}

\subsection{Quantitative feedback bounds}
\label{subsec:feedback_bounds}

From \eqref{eq:rank_one_switching},
\[
\|S-S_{\mathrm{red}}\|_{L^\infty}\le\frac{|A|}{|1-B|}\|\psi\|_{L^\infty},
\qquad
\|S'-S_{\mathrm{red}}'\|_{L^\infty}\le\frac{|A|}{|1-B|}\|\psi'\|_{L^\infty},
\]
with $\psi'=(a\psi-\chi)/\bar g$, hence
$\|\psi'\|_\infty\le(\|a\|_\infty\|\psi\|_\infty+\|\chi\|_\infty)/g_{\min}$ and
$0\le\psi\le\|\chi\|_\infty(l_m-l_0)/g_{\min}$.

\begin{lemma}[Feedback vanishes with the sensitivities]
\label{lem:feedback_vanishes}
If a uniformly bounded family of operating points satisfies
$\|g_E\|_{C^1}+\|\mu_E\|_{L^\infty}\le\varepsilon$, then there is $C>0$ with
$|A|+|B|\le C\varepsilon$, and for small $\varepsilon$,
$|\Gamma|\le C\varepsilon/(1-C\varepsilon)=O(\varepsilon)$.
\end{lemma}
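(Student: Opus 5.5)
The estimate is a direct bound on the two functionals, using only the bounded operating-point data. Since $A=\mathcal G^*[\lambda_{\mathrm{red}}]$ and $B=\mathcal G^*[\psi]$ with $\mathcal G^*$ in the strong form \eqref{eq:stationary_G_smooth}, the first step is the elementary estimate
\[
|\mathcal G^*[\zeta]|\le \|x^*\|_{L^1(\Om)}\big(\|g_E\|_{L^\infty}\|\zeta'\|_{L^\infty}+\|\mu_E\|_{L^\infty}\|\zeta\|_{L^\infty}\big)
\le \varepsilon\,\|x^*\|_{L^1(\Om)}\|\zeta\|_{W^{1,\infty}(\Om)},
\]
valid for $\zeta\in W^{1,\infty}(\Om)$. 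It then suffices to bound $\|x^*\|_{L^1}$, $\|\lambda_{\mathrm{red}}\|_{W^{1,\infty}}$ and $\|\psi\|_{W^{1,\infty}}$ by a constant that is uniform over the family. "Uniformly bounded family of operating points" is read as: $E^*\in[0,M_{\mathrm{st}}]$ for a fixed $M_{\mathrm{st}}$, so that $g_{\min}=g_{\min}(M_{\mathrm{st}})$, and $\|\bar\mu\|_\infty$, $u_{\max}$, $r$, $\|c\|_\infty$, $\|\chi\|_\infty$ and $p$ are all bounded uniformly.

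The second step collects these uniform bounds. Proposition~\ref{prop:frozen_stationary_profile} gives $0<x^*\le p/g_{\min}$, hence $\|x^*\|_{L^1}\le p(l_m-l_0)/g_{\min}$. For the costates, the explicit formulas \eqref{eq:lambda_red_formula} with $a\ge0$ give
\[
0\le\lambda_{\mathrm{red}}\le \|c\|_\infty u_{\max}(l_m-l_0)/g_{\min},
\qquad
0\le\psi\le\|\chi\|_\infty(l_m-l_0)/g_{\min}.
\]
The ODEs \eqref{eq:reduced_adjoint}--\eqref{eq:auxiliary_psi} then bound the derivatives:
\[
|\lambda_{\mathrm{red}}'|\le(\|a\|_\infty\|\lambda_{\mathrm{red}}\|_\infty+\|c\|_\infty u_{\max})/g_{\min},
\]
and likewise for $\psi'$, as already recorded in Section~\ref{subsec:feedback_bounds}. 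Here $\|a\|_\infty\le r+\|\bar\mu\|_\infty+u_{\max}$. Combining these with the first step gives $|A|+|B|\le C\varepsilon$, where $C$ depends only on the uniform data.

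Using the $C^1$ norm of $g_E$ is not necessary for the strong form. It is what one needs if one works instead with the $BV$ representation \eqref{eq:stationary_G_functional}, which contains $g_E'$ and the total variation of $x^*$. I would remark that both routes give the same bound, and that the strong form avoids having to control $\TV(x^*)$.

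The final step is elementary. For $\varepsilon<1/C$ we have $|B|\le C\varepsilon<1$, so $1-B\ge1-C\varepsilon>0$ and Theorem~\ref{thm:rank_one_correction} applies. Then $|\Gamma|=|A|/|1-B|\le C\varepsilon/(1-C\varepsilon)$, which is $O(\varepsilon)$, and for instance is at most $2C\varepsilon$ once $\varepsilon\le1/(2C)$.

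The only point needing care is the uniformity of the constants. In particular, $g_{\min}$ must not degenerate along the family, and that is exactly what the boundedness hypothesis combined with \eqref{eq:H2} on $[0,M_{\mathrm{st}}]$ supplies. The rest is routine bookkeeping.
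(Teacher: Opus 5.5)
Your proof is correct. It differs from the paper's only in which representation of $\mathcal G^*$ you estimate, but that choice changes which hypotheses do the work.

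The paper's proof bounds $A$ and $B$ through the $BV$ form \eqref{eq:stationary_G_functional}, in the same way as Lemma~\ref{lem:Gamma_bounded}. That form is linear in $g_E$, $g_E'$, $\mu_E$ and pairs them with $\zeta$ itself. It therefore needs only sup bounds on $\lambda_{\mathrm{red}}$ and $\psi$. In exchange it uses uniform $L^1$ and $\TV$ bounds on $x^*$ (plus the inflow trace $x^*(l_0^+)$) and the full $C^1$ norm of $g_E$, which is why the lemma is stated with $\|g_E\|_{C^1}$.

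You instead estimate the strong form \eqref{eq:stationary_G_smooth}. This needs only $\|x^*\|_{L^1}$ and $\|g_E\|_{L^\infty}+\|\mu_E\|_{L^\infty}$. The price is $W^{1,\infty}$ bounds on the costates, which you correctly obtain from the terminal-value ODEs using $\|a\|_\infty\le r+\|\bar\mu\|_\infty+u_{\max}$ and the floor $g_{\min}$.

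Your route shows that the $C^1$ hypothesis on $g_E$ is more than the stationary estimate requires. It also avoids any control of $\TV(x^*)$, which would itself need bounds on $\partial_l g$. The paper's route is the one that carries over to the transient setting, where the costate is merely continuous and the strong form is unavailable.

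The closing step $|1-B|\ge 1-C\varepsilon$ is identical in both arguments.
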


\subsection{Small-feedback preservation of single crossing}
\label{subsec:small_feedback}

\begin{theorem}[Monotone case]
\label{thm:small_feedback_monotone}
If $S_{\mathrm{red}}'\ge\eta>0$ on $(l_0,l_m)$, $|B|\le\beta<1$, and
$\frac{|A|}{1-\beta}\|\psi'\|_{L^\infty}<\eta$, then $S'>0$, so $S$ has at most one
zero; if also $S(l_0)<0<S(l_m)$, then $S$ has exactly one upward crossing. If, in
addition, the operating point is a stationary canonical candidate
(Definition~\ref{def:canonical_candidate}, i.e.\ the consistency condition
\eqref{eq:stationary_consistency} holds), the corresponding stationary canonical
bang--bang extremal is a minimum-size policy.
\end{theorem}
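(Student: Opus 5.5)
The plan is to apply the rank-one representation of Theorem~\ref{thm:rank_one_correction}, bound the correction to the derivative uniformly, and then read off the sign and crossing structure. Since $|B|\le\beta<1$, we have $1-B\ge 1-\beta>0$, so $1-B\neq0$ and the theorem gives
\[
S'=S_{\mathrm{red}}'-\frac{A}{1-B}\psi' \quad \text{a.e. on } \Om,
\]
with $S,S_{\mathrm{red}},\psi\in W^{1,\infty}(\Om)$. From $|1-B|\ge 1-|B|\ge1-\beta$ we get
\[
\Big|\frac{A}{1-B}\psi'\Big|\le\frac{|A|}{1-\beta}\|\psi'\|_{L^\infty}<\eta .
\]
Combined with $S_{\mathrm{red}}'\ge\eta$ this yields
\[
S'\ge \eta-\frac{|A|}{1-\beta}\|\psi'\|_{L^\infty}=:\eta'>0 \quad \text{a.e.}
\]

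Next we pass from the a.e.\ derivative bound to strict monotonicity. This is the one technical point: $S$ is only absolutely continuous, since $\lambda_{\mathrm{red}}$ and $\psi$ solve ODEs with bounded, possibly discontinuous, coefficients ($u$ is bang--bang). For $l_1<l_2$, however,
\[
S(l_2)-S(l_1)=\int_{l_1}^{l_2}S'\ge\eta'(l_2-l_1)>0 ,
\]
so $S$ is strictly increasing. It therefore has at most one zero. If moreover $S(l_0)<0<S(l_m)$, the intermediate value theorem produces exactly one zero $L\in(l_0,l_m)$, with $S<0$ on $[l_0,L)$ and $S>0$ on $(L,l_m]$. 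This is the strict upward single crossing of Definition~\ref{def:strict_single_crossing}, consistent with Proposition~\ref{prop:single_crossing_weaker}.

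For the policy conclusion, assume the consistency condition \eqref{eq:stationary_consistency}. By Proposition~\ref{prop:frozen_stationary_profile}, $x^*>0$ on $\Om$, so $\argmax_{v\in[0,u_{\max}]} v\,x^*(l)S(l)$ equals $\{u_{\max}\}$ where $S>0$ and $\{0\}$ where $S<0$. The zero set of $S$ is at most a single point, hence null. Consequently $u=u_{\max}\mathbf 1_{\{l>L\}}$ a.e., which is a minimum-size policy; in the degenerate cases where $S$ has one sign throughout, the policy is no harvest or full harvest.

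The only step needing care is the a.e.\ derivative issue, which the absolute-continuity argument above handles. Everything else is direct from Theorem~\ref{thm:rank_one_correction}.
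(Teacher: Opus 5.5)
Your proposal is correct and follows the paper's route. You bound $|\Gamma|=|A|/|1-B|\le|A|/(1-\beta)$, conclude $S'\ge\eta-\frac{|A|}{1-\beta}\|\psi'\|_{L^\infty}>0$, and use the endpoint signs for the unique upward crossing. You also make explicit two steps that the paper's one-line proof leaves implicit: passing from the a.e.\ derivative bound to strict monotonicity via absolute continuity of $S$ (which tacitly requires $c$ absolutely continuous, as the hypothesis on $S_{\mathrm{red}}'$ already presupposes), and reading off $u=u_{\max}\mathbf 1_{\{l>L\}}$ from the maximum condition using $x^*>0$.
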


\begin{theorem}[Nonmonotone case]
\label{thm:small_feedback_single_crossing}
Suppose there are $L_{\mathrm{red}}\in(l_0,l_m)$, $\delta,m,\kappa>0$ with
$S_{\mathrm{red}}\le-m$ on $[l_0,L_{\mathrm{red}}-\delta]$,
$S_{\mathrm{red}}\ge m$ on $[L_{\mathrm{red}}+\delta,l_m]$, and
$S_{\mathrm{red}}'\ge\kappa$ on $[L_{\mathrm{red}}-\delta,L_{\mathrm{red}}+\delta]$.
If $B\neq1$,
$\frac{|A|}{|1-B|}\|\psi\|_{L^\infty}<m$, and
$\frac{|A|}{|1-B|}\|\psi'\|_{L^\infty([L_{\mathrm{red}}-\delta,L_{\mathrm{red}}+\delta])}<\kappa$,
then $S$ has exactly one upward crossing in
$(L_{\mathrm{red}}-\delta,L_{\mathrm{red}}+\delta)$; if the operating point is a
stationary canonical candidate (consistency condition
\eqref{eq:stationary_consistency}), the corresponding stationary canonical
bang--bang extremal is a minimum-size policy.
\end{theorem}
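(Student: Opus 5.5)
The plan is to reduce Theorem~\ref{thm:small_feedback_single_crossing} to the robust certificate of Proposition~\ref{prop:quantitative_single_crossing}, applied with base function $S_{\mathrm{red}}$ and perturbed function $S$. The decomposition comes from Theorem~\ref{thm:rank_one_correction}. Since $B\neq1$, the full stationary adjoint is uniquely $\lambda=\lambda_{\mathrm{red}}+\Gamma\psi$ with $\Gamma=A/(1-B)$, so
\[
S-S_{\mathrm{red}}=-\Gamma\psi,\qquad S'-S_{\mathrm{red}}'=-\Gamma\psi' .
\]
The two smallness hypotheses then read $\|S-S_{\mathrm{red}}\|_{L^\infty}\le|\Gamma|\|\psi\|_{L^\infty}<m$ and $\|S'-S_{\mathrm{red}}'\|_{L^\infty(L_{\mathrm{red}}-\delta,L_{\mathrm{red}}+\delta)}<\kappa$. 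These are exactly the perturbation margins in Proposition~\ref{prop:quantitative_single_crossing}.

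The first step is to check that the functions are regular enough for that proposition. Proposition~\ref{prop:quantitative_single_crossing} is stated for $C^1$ functions, but here $u$ may be bang--bang, so $\lambda_{\mathrm{red}}$ and $\psi$ are only $W^{1,\infty}$: their ODEs \eqref{eq:reduced_adjoint}--\eqref{eq:auxiliary_psi} have $L^\infty$ right-hand sides. To avoid relying on $C^1$, I would repeat the proof of the certificate directly with Lipschitz functions and a.e.\ derivatives.

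\emph{Outside the window.} On $[l_0,L_{\mathrm{red}}-\delta]$ we have $S\le S_{\mathrm{red}}+|\Gamma|\|\psi\|_\infty<-m+m=0$. Similarly $S>0$ on $[L_{\mathrm{red}}+\delta,l_m]$.

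\emph{Inside the window.} On $I:=[L_{\mathrm{red}}-\delta,L_{\mathrm{red}}+\delta]$ we have $S'\ge S_{\mathrm{red}}'-|\Gamma|\|\psi'\|_{L^\infty(I)}>0$ a.e. Since $S$ is absolutely continuous, it is strictly increasing on $I$. It is negative at the left endpoint and positive at the right endpoint, so it has exactly one zero $L^*$ in the open interval, and it is negative before $L^*$ and positive after.

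Combining the two pieces gives $S<0$ on $[l_0,L^*)$ and $S>0$ on $(L^*,l_m]$. Thus $S$ is strictly upward single crossing in the sense of Definition~\ref{def:strict_single_crossing}, with its unique crossing inside $(L_{\mathrm{red}}-\delta,L_{\mathrm{red}}+\delta)$.

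For the policy conclusion, assume the operating point is a stationary canonical candidate, so \eqref{eq:stationary_consistency} holds. Proposition~\ref{prop:frozen_stationary_profile} gives $x^*>0$, so the maximizer of $v\,x^*S$ over $v\in[0,u_{\max}]$ is $0$ where $S<0$ and $u_{\max}$ where $S>0$. The zero set $\{S=0\}=\{L^*\}$ is Lebesgue-null, hence $u=u_{\max}\mathbf 1_{\{l>L^*\}}$ a.e., which is a minimum-size policy. Note that the rank-one representation is valid for any prescribed $u$, so using it at the candidate's own $u$ involves no circularity.

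The only delicate point I expect is the $C^1$-versus-$W^{1,\infty}$ regularity gap. It is handled by replacing the mean value theorem with monotonicity of absolutely continuous functions whose a.e.\ derivative is positive. Continuity of $S$ itself is guaranteed by the stationary ODE structure, as noted in Section~\ref{subsec:stationary_operating_point}.
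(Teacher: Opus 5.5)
Your proposal is correct and follows the paper's own argument. The paper's proof likewise writes $S=S_{\mathrm{red}}-\Gamma\psi$ via Theorem~\ref{thm:rank_one_correction}, uses the value margin to exclude zeros outside $[L_{\mathrm{red}}-\delta,L_{\mathrm{red}}+\delta]$ and the slope margin to get $S'>0$ inside, and then uses the endpoint signs to get exactly one upward crossing, citing Proposition~\ref{prop:quantitative_single_crossing}. One difference is an improvement: you replace the $C^1$ step by monotonicity of a Lipschitz function with a.e.\ positive derivative, which matters because for bang--bang $u$ the stationary $S$ is only $W^{1,\infty}$, whereas that proposition is stated for $C^1$ functions.
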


\begin{corollary}[Weak feedback]
\label{cor:weak_feedback}
Under Lemma~\ref{lem:feedback_vanishes}, if $S_{\mathrm{red}}$ has a robust upward
crossing (the margins of Theorem~\ref{thm:small_feedback_single_crossing}), there
is $\varepsilon_0>0$ such that every operating point with
$\|g_E\|_{C^1}+\|\mu_E\|_\infty<\varepsilon_0$ has a full switching function with
exactly one upward crossing.
\end{corollary}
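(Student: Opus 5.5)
The plan is to reduce the corollary to Theorem~\ref{thm:small_feedback_single_crossing}. Lemma~\ref{lem:feedback_vanishes} supplies $|A|+|B|\le C\varepsilon$, and this smallness is then propagated into the two margin inequalities. Fix the robust crossing data $(L_{\mathrm{red}},\delta,m,\kappa)$ of $S_{\mathrm{red}}$. These margins concern only the reduced switching function, which involves no $E$-sensitivities.

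\textbf{Step 1: uniform bounds on $\psi$ and $\psi'$.} By the bounds recorded before Lemma~\ref{lem:feedback_vanishes},
\[
0\le\psi\le\frac{\|\chi\|_\infty(l_m-l_0)}{g_{\min}}=:K_0,
\qquad
\|\psi'\|_\infty\le\frac{\|a\|_\infty K_0+\|\chi\|_\infty}{g_{\min}}=:K_1 .
\]
Both constants are uniform over the uniformly bounded family of operating points, because $g_{\min}$, $\|\bar\mu\|_\infty$ and $u_{\max}$ are controlled on the compact environmental interval $[0,M_{\mathrm{st}}]$.

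\textbf{Step 2: smallness of the gain.} Lemma~\ref{lem:feedback_vanishes} gives $|A|+|B|\le C\varepsilon$. For $\varepsilon<1/(2C)$ we therefore have $|B|\le\tfrac12$, so $B\neq1$, and
\[
|\Gamma|=\frac{|A|}{|1-B|}\le 2C\varepsilon .
\]

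\textbf{Step 3: choice of $\varepsilon_0$.} Take
\[
\varepsilon_0<\min\Big\{\frac{1}{2C},\ \frac{m}{2CK_0},\ \frac{\kappa}{2CK_1}\Big\}.
\]
Then both inequalities $|\Gamma|\,\|\psi\|_\infty<m$ and $|\Gamma|\,\|\psi'\|_{L^\infty([L_{\mathrm{red}}-\delta,L_{\mathrm{red}}+\delta])}<\kappa$ hold. Theorem~\ref{thm:small_feedback_single_crossing} then yields exactly one upward crossing of $S$ inside $(L_{\mathrm{red}}-\delta,L_{\mathrm{red}}+\delta)$.

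\textbf{Step 4: no zeros elsewhere.} Outside that window we have $|S-S_{\mathrm{red}}|<m$ and $|S_{\mathrm{red}}|\ge m$ with a fixed sign. Hence $S<0$ on $[l_0,L_{\mathrm{red}}-\delta]$ and $S>0$ on $[L_{\mathrm{red}}+\delta,l_m]$, so $S$ has no further zeros. This is the conclusion of Proposition~\ref{prop:quantitative_single_crossing}, which underlies the theorem.

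\textbf{Main obstacle: uniformity.} The crux is that the robust-crossing margins of $S_{\mathrm{red}}$ must hold uniformly across the family as $\varepsilon$ varies, since $S_{\mathrm{red}}$ depends on the operating point through $\bar g,\bar\mu$. I would read the hypothesis as fixing $(m,\kappa,\delta)$ uniformly for the family. I would also check that the constants $C,K_0,K_1$ do not depend on $\varepsilon$; the "uniformly bounded family" assumption of Lemma~\ref{lem:feedback_vanishes} supplies this.
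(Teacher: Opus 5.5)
Your proposal is correct and is essentially the argument the paper intends; the paper gives no separate proof of this corollary, only the composition of Lemma~\ref{lem:feedback_vanishes} (so $|\Gamma|=|A|/|1-B|=O(\varepsilon)$) with the uniform bounds on $\|\psi\|_\infty,\|\psi'\|_\infty$ and Theorem~\ref{thm:small_feedback_single_crossing}, and your exterior sign argument correctly rules out zeros outside the window. Your reading that the margins $(m,\kappa,\delta)$ of $S_{\mathrm{red}}$ must hold uniformly over the family is the right way to make the hypothesis precise, since $S_{\mathrm{red}}$ depends on $\bar g,\bar\mu$ at each operating point, and the paper leaves this point implicit.
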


\subsection{Threshold displacement}
\label{subsec:threshold_displacement}

\begin{proposition}[First-order displacement]
\label{prop:threshold_displacement}
If $S_{\mathrm{red}}(L_{\mathrm{red}})=0$,
$S_{\mathrm{red}}'(L_{\mathrm{red}})\neq0$, and the correction is small enough for
a unique nearby zero $L^*$ to exist, then
$\displaystyle L^*-L_{\mathrm{red}}
=
\frac{\Gamma\psi(L_{\mathrm{red}})}{S_{\mathrm{red}}'(L_{\mathrm{red}})}+O(\Gamma^2)$.
\end{proposition}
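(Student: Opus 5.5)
The plan is a first-order implicit-function expansion of the zero of $S=S_{\mathrm{red}}-\Gamma\psi$ around $L_{\mathrm{red}}$, treating $\Gamma$ as the small parameter. By Theorem~\ref{thm:rank_one_correction}, $S(l)=S_{\mathrm{red}}(l)-\Gamma\psi(l)$ with $\Gamma=A/(1-B)$. Here $\psi$ and $S_{\mathrm{red}}$ are independent of $\Gamma$ once the operating point is fixed. Also $S_{\mathrm{red}},\psi\in W^{1,\infty}(\Om)$, and they are $C^1$ wherever $c$, $u$, $\chi$ are continuous (from \eqref{eq:reduced_adjoint}--\eqref{eq:auxiliary_psi} and $\bar g\ge g_{\min}$). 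To make the expansion meaningful I would assume, as the statement implicitly does, that $S_{\mathrm{red}}$ is $C^1$ with Lipschitz derivative near $L_{\mathrm{red}}$, i.e.\ no jump of $u$ or $c$ there. Note that $u$ and $c$ are frozen, since the rank-one reduction holds for a prescribed $u$.

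\textbf{Step 1: define $F$ and apply the implicit function theorem.} Set
\[
F(L,\gamma):=S_{\mathrm{red}}(L)-\gamma\psi(L).
\]
Then $F(L_{\mathrm{red}},0)=0$ and $\partial_LF(L_{\mathrm{red}},0)=S_{\mathrm{red}}'(L_{\mathrm{red}})\neq0$. The implicit function theorem gives a $C^1$ branch $L(\gamma)$ with $L(0)=L_{\mathrm{red}}$. By the hypothesis that a unique nearby zero exists, $L^*=L(\Gamma)$.

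\textbf{Step 2: quantitative version by the mean value theorem.} To obtain the $O(\Gamma^2)$ remainder without relying only on abstract smoothness, write
\[
0=S(L^*)=S_{\mathrm{red}}(L^*)-\Gamma\psi(L^*).
\]
Expand $S_{\mathrm{red}}(L^*)=S_{\mathrm{red}}'(L_{\mathrm{red}})(L^*-L_{\mathrm{red}})+O(|L^*-L_{\mathrm{red}}|^2)$, using the Lipschitz bound on $S_{\mathrm{red}}'$. Also $\psi(L^*)=\psi(L_{\mathrm{red}})+O(|L^*-L_{\mathrm{red}}|)$, since $\|\psi'\|_\infty<\infty$ by Section~\ref{subsec:feedback_bounds}.

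\textbf{Step 3: the a priori bound, which is the main obstacle.} The expansion above only closes if $|L^*-L_{\mathrm{red}}|=O(|\Gamma|)$. To prove this, let $\kappa:=|S_{\mathrm{red}}'(L_{\mathrm{red}})|/2$. On a $\delta$-neighbourhood of $L_{\mathrm{red}}$ where $|S_{\mathrm{red}}'|\ge\kappa$, we have
\[
\kappa|L^*-L_{\mathrm{red}}|\le|S_{\mathrm{red}}(L^*)|=|\Gamma\psi(L^*)|\le|\Gamma|\|\psi\|_\infty.
\]
Hence $|L^*-L_{\mathrm{red}}|\le|\Gamma|\|\psi\|_\infty/\kappa$, provided $L^*$ lies in that neighbourhood, which is what ``small enough'' guarantees.

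\textbf{Step 4: conclude.} Substituting the bound from Step 3 into Step 2 gives
\[
S_{\mathrm{red}}'(L_{\mathrm{red}})(L^*-L_{\mathrm{red}})=\Gamma\psi(L_{\mathrm{red}})+O(\Gamma^2).
\]
Dividing by $S_{\mathrm{red}}'(L_{\mathrm{red}})$ yields the stated formula, with a constant depending on $\|\psi\|_{W^{1,\infty}}$, on $\kappa$, and on the Lipschitz constant of $S_{\mathrm{red}}'$.
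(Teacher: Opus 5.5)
Your proposal is correct and is essentially the paper's argument: expand $0=S_{\mathrm{red}}(L^*)-\Gamma\psi(L^*)$ about $L_{\mathrm{red}}$ and use $L^*-L_{\mathrm{red}}=O(\Gamma)$. You go further than the paper in two ways: you prove that a priori bound via the mean value theorem, which the paper only asserts; and you state explicitly the local regularity that a genuine $O(\Gamma^2)$ remainder needs, namely $S_{\mathrm{red}}'$ Lipschitz and $\psi\in C^1$ near $L_{\mathrm{red}}$, with no jump of $u$ or $c$ there. The implicit-function step is harmless but redundant once Steps 2--4 are in place.
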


The first-order displacement of the switching threshold is illustrated in Figure~\ref{fig:displacement}. The spy inset highlights the horizontal shift $L^* - L_{\mathrm{red}}$ caused by the vertical rank-one perturbation $-\Gamma\psi(l)$, consistently with the approximation in Proposition~8.7.

\begin{figure}[htbp]
    \centering
    \includegraphics[width=0.7\textwidth]{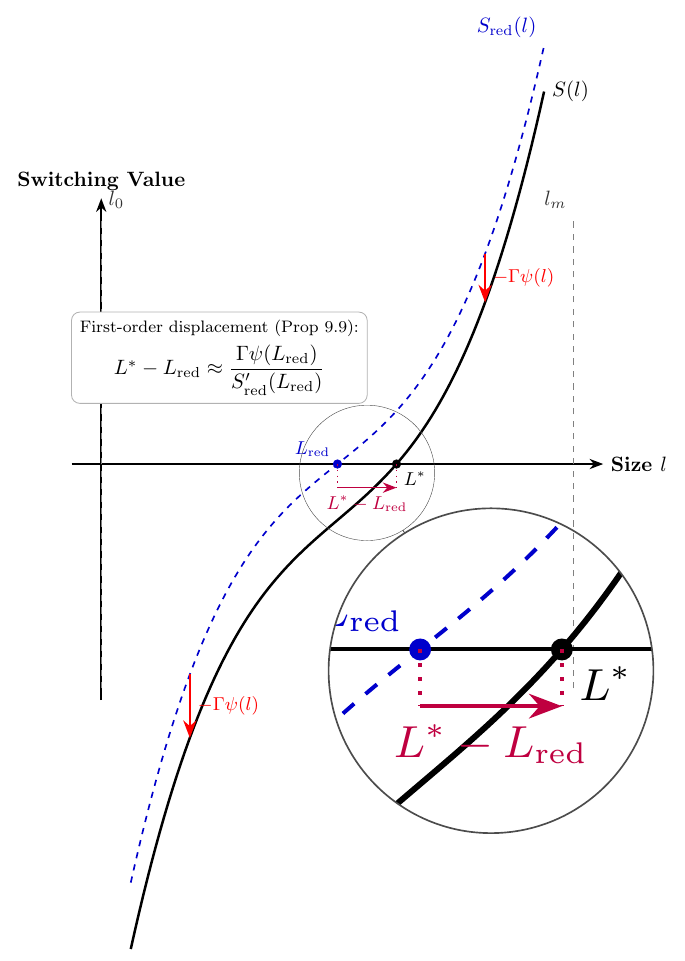}
    \caption{Displacement of switching thresholds. The spy inset magnifies the first-order shift $L^* - L_{\mathrm{red}}$ between $S(l)$ and $S_{\mathrm{red}}(l)$ induced by the rank-one perturbation $-\Gamma\psi(l)$.}
    \label{fig:displacement}
\end{figure}

\subsection{Unification: the closure gain is the zero-discount feedback gain}
\label{subsec:unification}

We now prove that the feedback gain $B$ and the closure derivative $\Phi'(E^*)$ of
Section~\ref{sec:stationary} are the same scalar at zero discount, with $r$ acting
as a spectral shift. Write $\alpha=-g_E/\bar g\ge0$ and $b$ as in
\eqref{eq:a_b_definitions}--\eqref{eq:b_expansion}, both evaluated at $E^*$, and let
$\tau(\xi,s):=\int_\xi^s\dd\eta/\bar g(\eta)$ be the growth time from $\xi$ to $s$.
For $r\ge0$ let $\psi_r$ solve \eqref{eq:auxiliary_psi} with $a=r+\bar\mu+u$, and
$B(r):=\mathcal G^*[\psi_r]$ (smooth form \eqref{eq:stationary_G_smooth}). Define
the discounted tail load
\begin{equation}\label{eq:Wr}
W_r(\xi):=\int_\xi^{l_m}\chi(s)x^*(s)\,e^{-r\tau(\xi,s)}\dd s,
\qquad
W_0=W_{E^*}.
\end{equation}

\begin{lemma}[Forward/adjoint survival duality]
\label{lem:duality}
For every $r\ge0$ and $\xi\in\Om$,
$\displaystyle \bar g(\xi)\,\psi_r(\xi)\,x^*(\xi)=W_r(\xi)$.
In particular, at $r=0$, $\bar g\,\psi\,x^*=W_{E^*}$.
\end{lemma}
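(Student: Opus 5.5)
Both sides are explicit integrals, so the plan is to compute $\bar g\psi_r x^*$ from the formulas \eqref{eq:lambda_red_formula} and \eqref{eq:stationary_profile} and to verify that the exponential factors recombine into $e^{-r\tau(\xi,s)}$. With $a_r=r+\bar\mu+u$, the solution of \eqref{eq:auxiliary_psi} is
\[
\psi_r(\xi)=\int_\xi^{l_m}\frac{\chi(s)}{\bar g(s)}\exp\!\Big(-\!\int_\xi^s\frac{r+\bar\mu+u}{\bar g}\Big)\dd s .
\]
Its exponential factors as $e^{-r\tau(\xi,s)}\,\sigma^*(s)/\sigma^*(\xi)$, where $\sigma^*(l)=\exp(-\int_{l_0}^l(\bar\mu+u)/\bar g)$ is the survival factor of \eqref{eq:stationary_profile} at $E=E^*$. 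This uses only the additivity $\int_\xi^s=\int_{l_0}^s-\int_{l_0}^\xi$ and the definition of $\tau$.

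Next, from \eqref{eq:stationary_profile} we have $\bar g(\xi)x^*(\xi)=p\,\sigma^*(\xi)$. Multiplying the expression for $\psi_r$ by this, the factor $\sigma^*(\xi)$ cancels and we get
\[
\bar g(\xi)\psi_r(\xi)x^*(\xi)=\int_\xi^{l_m}\chi(s)\,\frac{p\,\sigma^*(s)}{\bar g(s)}\,e^{-r\tau(\xi,s)}\dd s .
\]
Since $p\sigma^*(s)/\bar g(s)=x^*(s)$, the right-hand side is exactly $W_r(\xi)$ as defined in \eqref{eq:Wr}. At $r=0$ we have $\tau$-weight $1$, so $W_0$ is the tail load $W_{E^*}$ of Section~\ref{sec:stationary}, which gives the special case.

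As a cross-check, and as an alternative proof that avoids explicit formulas, one can differentiate. Set $F:=\bar g\psi_r x^*$. Using $(\bar g x^*)'=-(\bar\mu+u)x^*$ and $\bar g\psi_r'=a_r\psi_r-\chi$, the product rule gives $F'=-\chi x^*+(r/\bar g)F$ with $F(l_m)=0$. On the other hand, $W_r$ satisfies the same linear terminal-value ODE: differentiating under the integral uses $\partial_\xi\tau(\xi,s)=-1/\bar g(\xi)$, and $W_r(l_m)=0$. Uniqueness for linear ODEs with $L^\infty$ coefficients, which holds because $\bar g\ge g_{\min}>0$, then yields $F=W_r$. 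Both functions are absolutely continuous, so this is rigorous.

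The only point needing care is regularity. The control $u$ is merely $L^\infty$ (bang–bang), so all ODE manipulations must be read in the $W^{1,\infty}$/a.e. sense. This is the main obstacle, but it is mild: the explicit-formula route sidesteps it entirely, since it uses only Fubini-free algebra on integrals of bounded functions. I would therefore present the direct computation as the proof and keep the ODE uniqueness argument as a remark.
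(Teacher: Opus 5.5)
Your direct computation is correct and is essentially the paper's proof: you split the integrating factor of $\psi_r$ as $e^{-r\tau(\xi,s)}\exp(-\int_\xi^s q)$, multiply by $\bar g(\xi)x^*(\xi)=p\,\sigma^*(\xi)$, and let the forward and backward survival factors telescope so that $p\,\sigma^*(s)/\bar g(s)=x^*(s)$ appears inside the integral. Your ODE cross-check is also valid but not needed: $F=\bar g\psi_r x^*$ and $W_r$ both satisfy $F'=(r/\bar g)F-\chi x^*$ with $F(l_m)=0$, so they coincide by uniqueness.
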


\begin{theorem}[Closure-gain identity]
\label{thm:closure_gain_identity}
Let
$\mathsf A^*:=\mathsf A(E^*)$ and $\mathsf C^*:=\mathsf C(E^*)$.
For every $r\ge0$,
\begin{equation}\label{eq:Br_general}
B(r)
=
\underbrace{\int_{l_0}^{l_m}\alpha\chi x^*}_{\mathsf A^*}
-\int_{l_0}^{l_m} b(\xi)\,W_r(\xi)\dd\xi
-r\int_{l_0}^{l_m}\frac{\alpha(\xi)}{\bar g(\xi)}\,W_r(\xi)\dd\xi.
\end{equation}

In particular,
\[
B(0)=\mathsf A^*-\mathsf C^*=\Phi'(E^*).
\]
If, in addition,
$\chi x^*$ is continuous and $\bar g$ is continuous and bounded below on $\Om$
(so that $\xi\mapsto\chi(\xi)x^*(\xi)\bar g(\xi)$ is bounded and the inner integrand
below is dominated uniformly), then $\lim_{r\to\infty}B(r)=0$:
\[
B(0)=\Phi'(E^*),
\qquad
\lim_{r\to\infty}B(r)=0.
\]
\end{theorem}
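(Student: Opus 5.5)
The plan is a direct computation of $B(r)=\mathcal G^*[\psi_r]$ in its smooth form \eqref{eq:stationary_G_smooth}. First we use the $\psi_r$-equation \eqref{eq:auxiliary_psi} to remove $\psi_r'$. Then we regroup the coefficients so that $b$ from \eqref{eq:b_expansion} appears. Finally Lemma~\ref{lem:duality} turns $\psi_r x^*$ into $W_r/\bar g$. The limits $r=0$ and $r\to\infty$ are then read off from the resulting formula \eqref{eq:Br_general}.

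\emph{Step 1 (eliminate $\psi_r'$).} From \eqref{eq:auxiliary_psi}, $\psi_r'=(a_r\psi_r-\chi)/\bar g$, and by definition $g_E=-\alpha\bar g$. Hence
\[
g_E\psi_r'=-\alpha(a_r\psi_r-\chi)=\alpha\chi-\alpha a_r\psi_r ,
\]
and therefore
\[
B(r)=\int_{l_0}^{l_m}\alpha\chi x^*\dd\xi-\int_{l_0}^{l_m}(\alpha a_r+\mu_E)\,\psi_r x^*\dd\xi .
\]
The first integral is exactly $\mathsf A^*$.

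\emph{Step 2 (identify $b$ and apply duality).} By \eqref{eq:b_expansion}, $b=\mu_E/\bar g+(\bar\mu+u)\alpha/\bar g$. Since $a_r=r+\bar\mu+u$, this gives
\[
\alpha a_r+\mu_E=\bar g\,b+r\alpha .
\]
Lemma~\ref{lem:duality} states $\psi_r x^*=W_r/\bar g$, so
\[
(\alpha a_r+\mu_E)\psi_r x^*=b\,W_r+r\,\frac{\alpha}{\bar g}\,W_r .
\]
Substituting into Step 1 yields \eqref{eq:Br_general}. At $r=0$ we have $W_0=W_{E^*}$ and the last term drops. Comparing with the tail form \eqref{eq:Phi_prime_tail} at $E=E^*$ then gives $B(0)=\mathsf A^*-\mathsf C^*=\Phi'(E^*)$. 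All integrals are finite because $\alpha,b,x^*,\chi,W_r$ are bounded and $\bar g\ge g_{\min}>0$.

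\emph{Step 3 (the limit $r\to\infty$; main obstacle).} The middle term is harmless. For $s>\xi$ we have $\tau(\xi,s)>0$, so $e^{-r\tau(\xi,s)}\to0$. Dominated convergence (dominant $\chi x^*$) gives $W_r(\xi)\to0$ for each $\xi$, and a second dominated convergence (dominant $b\,\|\chi x^*\|_{L^1}$) gives $\int bW_r\to0$.

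The delicate term is $r\int(\alpha/\bar g)W_r$, which is of the form $0\cdot\infty$. To handle it, change variables from $s$ to $t=\tau(\xi,s)$, using $\dd s=\bar g(s)\dd t$:
\[
rW_r(\xi)=\int_0^{\tau(\xi,l_m)}r e^{-rt}\,(\chi x^*\bar g)(s(t))\dd t .
\]
This is an approximate identity concentrating at $t=0$, i.e. at $s=\xi$. Continuity of $\chi x^*\bar g$ gives $rW_r(\xi)\to\chi(\xi)x^*(\xi)\bar g(\xi)$ for every $\xi<l_m$. We also have the uniform bound $|rW_r|\le\sup|\chi x^*\bar g|$, which is where the extra continuity and lower-bound hypotheses on $\chi x^*$ and $\bar g$ enter. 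A final dominated convergence then gives
\[
r\int\frac{\alpha}{\bar g}\,W_r\dd\xi\;\longrightarrow\;\int\alpha\chi x^*\dd\xi=\mathsf A^* .
\]
Thus the residence-time term exactly cancels $\mathsf A^*$, and $\lim_{r\to\infty}B(r)=\mathsf A^*-0-\mathsf A^*=0$.
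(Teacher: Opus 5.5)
Your proof is correct and follows the paper's argument: you eliminate $\psi_r'$ using the $\psi_r$-equation, regroup the coefficient as $\bar g\,b+r\alpha$, apply the forward/adjoint duality lemma, and treat $r\to\infty$ by the travel-time change of variables followed by an approximate-identity and dominated-convergence step. The one cosmetic difference is that you obtain $\int bW_r\to0$ from pointwise decay of $e^{-r\tau}$ plus dominated convergence, whereas the paper uses the bound $W_r\le(\sup H_\xi)/r$; both are valid, and yours does not need the continuity hypothesis for that term.
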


\begin{corollary}[Automatic and unconditional nonresonance]
\label{cor:nonresonance_consequences}
\leavevmode
\begin{enumerate}[label=(\alph*)]
\item If the local condition $\Phi'(E^*)<0$ holds at the operating point, then
$1-B(0)=1-\Phi'(E^*)>1$, and by continuity there is $r_0>0$ with $1-B(r)>0$ for
$r\in[0,r_0]$. This holds in particular whenever the global integrated-monotonicity
condition of Theorem~\ref{thm:integrated_monotonicity} is satisfied, but only the
local sign of $\Phi'$ at $E^*$ is needed here.
\item (Unconditional in $r$.) Since $\alpha,b,W_r\ge0$, the two subtracted terms in
\eqref{eq:Br_general} are nonnegative, so $B(r)\le\mathsf A^*$ for all $r\ge0$.
Hence
\[
\mathsf A^*
=
\int_{l_0}^{l_m}\Big(-\frac{g_E}{\bar g}\Big)\chi x^*\dd l
<1
\quad\Longrightarrow\quad
1-B(r)>0\ \ \text{for all }r\ge0.
\]
\end{enumerate}
\end{corollary}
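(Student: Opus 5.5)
Both parts are read off the closure-gain identity of Theorem~\ref{thm:closure_gain_identity} together with continuity of $r\mapsto B(r)$.

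For (a), set $r=0$ in \eqref{eq:Br_general}. Theorem~\ref{thm:closure_gain_identity} gives $B(0)=\mathsf A^*-\mathsf C^*=\Phi'(E^*)$. So $\Phi'(E^*)<0$ yields $1-B(0)=1-\Phi'(E^*)>1>0$.

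To extend this to small $r>0$, I will show that $r\mapsto B(r)$ is continuous on $[0,\infty)$. The cleanest route is through the explicit formula \eqref{eq:Br_general}. The integrand of $W_r$, namely $\chi x^*e^{-r\tau(\xi,s)}$, is dominated by $\chi x^*\in L^1$, and it is continuous in $r$ because $\tau$ is bounded by $(l_m-l_0)/g_{\min}$. Dominated convergence then shows that $W_r(\xi)$ is continuous in $r$ for each $\xi$, with $0\le W_r\le W_0$.

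The coefficients $b$ and $\alpha/\bar g$ are bounded on $\Om$: this follows from \eqref{eq:H1}--\eqref{eq:H5} at the fixed $E^*$ and from $\bar g\ge g_{\min}$. A second application of dominated convergence therefore gives continuity of $\int b\,W_r$ and of $\int(\alpha/\bar g)W_r$. The factor $r$ multiplying the last term is also continuous, so $B$ is continuous. Alternatively, one can argue directly that $\psi_r\to\psi_0$ in $W^{1,\infty}$ by Gronwall on \eqref{eq:auxiliary_psi}, and then use boundedness of $\mathcal G^*$ on $W^{1,\infty}$.

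Continuity together with $1-B(0)>1$ gives some $r_0>0$ with $1-B(r)>0$ on $[0,r_0]$. For the remark in (a): the global hypothesis of Theorem~\ref{thm:integrated_monotonicity} gives $\Phi'<0$ on all of $[0,M_{\mathrm{st}}]$, hence in particular at $E^*\in[0,M_{\mathrm{st}}]$. Only that pointwise value was used.

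For (b), I use the sign structure of \eqref{eq:Br_general}. Three facts are needed:
\begin{itemize}
\item $\alpha\ge0$ and $b\ge0$ by \eqref{eq:H4} and \eqref{eq:b_expansion};
\item $W_r\ge0$, since $\chi\ge0$, $x^*>0$ (Proposition~\ref{prop:frozen_stationary_profile}) and the exponential is positive;
\item $\bar g>0$ and $r\ge0$.
\end{itemize}
Hence both subtracted terms $\int bW_r$ and $r\int(\alpha/\bar g)W_r$ are nonnegative, so $B(r)\le\mathsf A^*$ for every $r\ge0$. Substituting $\alpha=-g_E/\bar g$ identifies $\mathsf A^*$ with the displayed integral. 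Therefore $\mathsf A^*<1$ implies $1-B(r)\ge1-\mathsf A^*>0$ uniformly in $r$.

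The only step that is not immediate is the continuity of $B$ in $r$, in particular uniformly up to $r=0$. Because the problem is posed on a compact size interval with $\bar g$ bounded below, the dominated-convergence argument above should handle it directly.
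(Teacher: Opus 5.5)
Your proposal is correct and follows the paper's proof. Part (a) comes from the identity $B(0)=\Phi'(E^*)$ of Theorem~\ref{thm:closure_gain_identity} together with continuity of $r\mapsto B(r)$, and part (b) from the nonnegativity of the two subtracted terms in \eqref{eq:Br_general}. The only difference is that you actually justify the continuity of $B$ in $r$, which the paper simply asserts; your dominated-convergence argument on \eqref{eq:Br_general}, using $0\le W_r\le W_0$ and the boundedness of $b$ and $\alpha/\bar g$, does this soundly.
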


\begin{remark}[Characteristic-equation reading and its limits]
\label{rmk:characteristic_reading}
Because $B$ carries no payoff ($A$ alone carries the source $cu$), $B(r)$ is the
scalar loop gain of the stationary closure at spectral shift $r$, and
$1-B(r)=0$ has the form of a closure/characteristic equation; at $r=0$ it reduces
to $1-\Phi'(E^*)$. The condition $\Phi'(E^*)<0$ thus controls only the $r=0$
slice, i.e.\ the zero-discount closure. An all-$r$ guarantee is equivalent
to the absence of a nonnegative real root of $1-B(r)$ and does not follow
from $\Phi'(E^*)<0$ in general. Formula \eqref{eq:Br_general} does not imply
monotonicity of $B(r)$ under the present assumptions, because discounting acts
differently on spatially separated contributions: indeed
$B(r)-B(0)=\int b\,(W_0-W_r)-r\int\frac{\alpha}{\bar g}W_r$ has a sign that need
not be definite, the first term being nonnegative and the second nonpositive. The
robust, operating-point-checkable guarantee is therefore
Corollary~\ref{cor:nonresonance_consequences}(b); the neighborhood guarantee of
part (a) is the consequence of steady-state uniqueness alone.
\end{remark}

\subsection{Reconstruction and certificate}
\label{subsec:reconstruction_procedure}

For a prescribed stationary $u$: (1) solve $E^*=\Phi(E^*)$ and form $x^*$; (2)
solve \eqref{eq:reduced_adjoint} for $\lambda_{\mathrm{red}}$; (3) solve
\eqref{eq:auxiliary_psi} for $\psi$; (4) evaluate $A,B$ from
\eqref{eq:A_B_definition}; (5) check $d:=1-B$; (6) if $d\neq0$, set
$\Gamma=A/d$ and $S=S_{\mathrm{red}}-\Gamma\psi$; (7) read the sign geometry of
$S$. The residual
$R_{\mathrm{rank}}:=\|\lambda-\lambda_{\mathrm{red}}-\Gamma\psi\|_{L^\infty}$ is
zero in exact arithmetic and serves as an implementation check.

\begin{definition}[Grid diagnostic]
\label{def:grid_diagnostic}
A grid diagnostic computes $\lambda_{\mathrm{red}},\psi,A,B,\Gamma,S$ on a fine
mesh and records the observed sign changes of $S$. It is evidence for, not a proof
of, single crossing.
\end{definition}

For a rigorous certificate, partition $l_0=\ell_0<\cdots<\ell_N=l_m$,
$I_j=[\ell_{j-1},\ell_j]$, and suppose validated computations produce interval
enclosures $[S](I_j)\supseteq\{S(l):l\in I_j\}$, $[S'](I_j)$, and $[A],[B],[\Gamma]$
with $0\notin1-[B]$.

\begin{theorem}[Validated single-crossing certificate]
\label{thm:validated_certificate}
If there is a unique $k$ with $\sup[S](I_j)<0$ for $j<k$, $\inf[S](I_j)>0$ for
$j>k$, $\inf[S'](I_k)>0$, and $S(\ell_{k-1})<0<S(\ell_k)$, then $S$ has exactly one
zero in $I_k$, an upward crossing; if the operating point is a stationary
canonical candidate \eqref{eq:stationary_consistency}, the corresponding
stationary canonical bang--bang extremal is a minimum-size policy.
\end{theorem}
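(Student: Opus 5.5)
The plan is a direct sign-tracking argument on the partition, followed by an appeal to the bang--bang rule; the only genuinely analytic step is uniqueness of the zero inside $I_k$. We assume throughout that $S=S_{\mathrm{red}}-\Gamma\psi$ is the exact switching function. Since $0\notin1-[B]$, we have $B\neq1$, so Theorem~\ref{thm:rank_one_correction} applies. The function $S$ lies in $W^{1,\infty}(\Om)$ because $\lambda_{\mathrm{red}},\psi\in W^{1,\infty}$ and $c\in C^1$. In fact $S$ is $C^1$ wherever $u$ is continuous, and it is absolutely continuous in general. The validated enclosures are taken to contain the true values of $S$ on each $I_j$, of $S'$ a.e.\ on $I_k$, and of $\Gamma$.

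\emph{Step 1 (signs away from $I_k$).} For $j<k$ the enclosure gives $S(l)\le\sup[S](I_j)<0$ for every $l\in I_j$. Hence $S<0$ on $[l_0,\ell_{k-1}]$. Symmetrically, $S>0$ on $[\ell_k,l_m]$. So every zero of $S$ lies in the open interval $(\ell_{k-1},\ell_k)$. The endpoint signs $S(\ell_{k-1})<0<S(\ell_k)$ agree with this, and they also cover the edge cases $k=1$ or $k=N$, where one of the families $j<k$, $j>k$ is empty.

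\emph{Step 2 (existence and uniqueness in $I_k$).} By continuity of $S$ and the intermediate value theorem, $S$ has a zero $L^*\in(\ell_{k-1},\ell_k)$. For uniqueness, $\inf[S'](I_k)=:\kappa>0$ gives $S'\ge\kappa$ a.e.\ on $I_k$. Since $S$ is absolutely continuous, $S(l_2)-S(l_1)=\int_{l_1}^{l_2}S'\ge\kappa(l_2-l_1)>0$ for $l_1<l_2$ in $I_k$. Thus $S$ is strictly increasing on $I_k$, the zero is unique, and it is an upward crossing with $S<0$ on $[\ell_{k-1},L^*)$ and $S>0$ on $(L^*,\ell_k]$. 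Combined with Step 1, $S$ is strictly upward single crossing on $\Om$ in the sense of Definition~\ref{def:strict_single_crossing}.

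\emph{Step 3 (policy).} Suppose the candidate satisfies \eqref{eq:stationary_consistency}. Stationarity with $p>0$ gives $x^*>0$ on $\Om$ (Proposition~\ref{prop:frozen_stationary_profile}). The pointwise argmax of $v\,x^*S$ then forces $u=0$ a.e.\ where $S<0$, i.e.\ on $(l_0,L^*)$, and $u=u_{\max}$ a.e.\ where $S>0$, i.e.\ on $(L^*,l_m)$. The zero set $\{L^*\}$ is null. Hence $u=u_{\max}\mathbf 1_{\{l>L^*\}}$ a.e., which is the stationary analogue of Theorem~\ref{thm:strict_threshold_structure}(iii).

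\emph{Main obstacle.} The delicate point is justifying that the enclosure $[S'](I_k)$ legitimately bounds $S'$ when $u$ is bang--bang, because $S'$ may then jump. I would handle this by interpreting the hypothesis as an a.e.\ (essential-infimum) bound and using absolute continuity, as in Step 2, rather than a $C^1$ mean value theorem.
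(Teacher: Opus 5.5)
Your proposal is correct and follows the same route as the paper's proof. The enclosures on $I_j$, $j\neq k$, exclude exterior zeros, the enclosure of $S'$ on $I_k$ gives strict monotonicity there, and the endpoint signs with the intermediate value theorem give exactly one upward zero. You also fill in steps the paper leaves implicit: reading $\inf[S'](I_k)>0$ as an essential bound and using absolute continuity of $S\in W^{1,\infty}(\Om)$, since $S'$ can jump at switches of a bang--bang $u$, and deriving $u=u_{\max}\mathbf 1_{\{l>L^*\}}$ from \eqref{eq:stationary_consistency} with $x^*>0$.
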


\begin{corollary}[Endpoint policies]
\label{cor:validated_endpoint_policies}
Suppose the operating point is a stationary canonical candidate
\eqref{eq:stationary_consistency}. If $\sup[S](I_j)<0$ for all $j$, then $S<0$ and
the extremal is no harvesting; if $\inf[S](I_j)>0$ for all $j$, then $S>0$ and the
extremal is full harvesting.
\end{corollary}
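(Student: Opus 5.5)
The statement is Corollary~\ref{cor:validated_endpoint_policies}. It is essentially immediate from the enclosure property of the interval data together with the maximum (consistency) condition \eqref{eq:stationary_consistency} and the positivity of the stationary profile. The plan is to treat the two cases symmetrically.

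\emph{Step 1: sign of $S$ on all of $\Om$.} First I pass from the enclosures to the sign of $S$. Since $[S](I_j)\supseteq\{S(l):l\in I_j\}$ and the intervals $I_j$ cover $\Om=[l_0,l_m]$, the hypothesis $\sup[S](I_j)<0$ for every $j$ gives $S(l)<0$ for every $l\in\Om$. Likewise, $\inf[S](I_j)>0$ for all $j$ gives $S>0$ on $\Om$. Continuity of $S$ is not needed here, but it is available anyway, since $\lambda\in W^{1,\infty}(\Om)$ in the stationary setting.

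\emph{Step 2: sign of $x^*S$.} Next I invoke Proposition~\ref{prop:frozen_stationary_profile} at $E=E^*$, which gives $x^*(l)>0$ for every $l$. Hence $x^*(l)S(l)$ has the same strict sign as $S(l)$ everywhere.

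\emph{Step 3: identify the control.} Finally I apply the consistency condition \eqref{eq:stationary_consistency}. For a.e.\ $l$, $u(l)$ maximizes the linear function $v\mapsto v\,x^*(l)S(l)$ over $[0,u_{\max}]$.
\begin{itemize}
\item If the coefficient $x^*(l)S(l)$ is strictly negative, the unique maximizer is $v=0$, so $u\equiv0$ a.e.: the extremal is no harvesting.
\item If the coefficient is strictly positive, the unique maximizer is $v=u_{\max}$, so $u\equiv u_{\max}$ a.e.: the extremal is full harvesting.
\end{itemize}
No singular set arises, because $S$ never vanishes.

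\emph{Main point to check.} There is no real obstacle. The only thing to verify is that the enclosures are genuine: the validated computation must bound $S=S_{\mathrm{red}}-\Gamma\psi$ with $\Gamma$ ranging over $[\Gamma]$, and this is legitimate given $0\notin1-[B]$ and Theorem~\ref{thm:rank_one_correction}. With that, the enclosure hypothesis is exactly what Step~1 uses.
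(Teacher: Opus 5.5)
Your argument is correct and is exactly the intended one. The paper gives no separate proof of this corollary, treating it as immediate from three facts you use. First, the enclosures $[S](I_j)\supseteq\{S(l):l\in I_j\}$ over the covering partition give a strict sign of $S$ on all of $\Om$. Second, $x^*>0$ follows from the closed-form profile with $p>0$. Third, the maximum condition \eqref{eq:stationary_consistency} then forces $u=0$ (resp.\ $u=u_{\max}$) a.e.\ when $x^*S<0$ (resp.\ $x^*S>0$).

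Your closing remark on whether the enclosures are legitimate concerns the hypothesis, not a step of the proof, and is not needed.
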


\begin{proposition}[Multiple-switch certificate]
\label{prop:validated_multiple_switch}
If for ordered $q_1<q_2<q_3$ validated enclosures give
$S(q_1)<0<S(q_2)$ and $S(q_2)>0>S(q_3)$ (or the reversed pattern), then $S$ has at
least two zeros and fails single crossing.
\end{proposition}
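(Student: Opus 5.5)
The argument is a one-dimensional sign-change argument: locate two sign changes of the continuous function $S$ and conclude that the positivity set is not an upper interval.

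First I fix regularity. At a stationary operating point, Section~\ref{subsec:stationary_operating_point} gives $\lambda,\psi\in W^{1,\infty}(\Om)$. Hence $S=c-\lambda$ is continuous on $\Om$, because $c\in C^1(\Om)$ by \eqref{eq:adjoint_assumptions}. This holds with or without the rank-one representation of Theorem~\ref{thm:rank_one_correction}, which is valid since $0\notin1-[B]$.

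The validated enclosures contain the true values. So we may use the exact inequalities $S(q_1)<0<S(q_2)$ and $S(q_2)>0>S(q_3)$ with $q_1<q_2<q_3$.

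By the intermediate value theorem on $[q_1,q_2]$ there is $z_1\in(q_1,q_2)$ with $S(z_1)=0$. On $[q_2,q_3]$ there is $z_2\in(q_2,q_3)$ with $S(z_2)=0$. Since $z_1<q_2<z_2$, these are two distinct zeros. The reversed pattern $S(q_1)>0>S(q_2)<0<S(q_3)$ is handled the same way.

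To show that single crossing fails, I check the ordered sign condition \eqref{eq:ordered_sign} of Definition~\ref{def:weak_single_crossing}.

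\begin{itemize}
\item In the first pattern, $q_2<q_3$ while $\operatorname{sgn}S(q_2)=1>-1=\operatorname{sgn}S(q_3)$. This violates \eqref{eq:ordered_sign}, so $S$ is not weakly upward single crossing.
\item In the reversed pattern, $q_1<q_2$ with $\operatorname{sgn}S(q_1)=1>\operatorname{sgn}S(q_2)=-1$, which gives the same violation.
\end{itemize}

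Strict single crossing (Definition~\ref{def:strict_single_crossing}) implies the weak property, so it fails too. Equivalently, $\{S>0\}$ contains $q_2$ but not the larger point $q_3$, so it is not an upper set. If the operating point is a canonical candidate, the harvested set $\{S>0\}$ is therefore disconnected from $l_m$ or otherwise not of the form $(L,l_m]$, i.e.\ the extremal is a multiple-switch policy.

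There is no real obstacle here. The only point needing care is that the enclosures are rigorous, so that strict sign information transfers to the exact $S$, and that $S$ is continuous; both are provided above.
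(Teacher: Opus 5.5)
Your proof is correct and follows the paper's argument: the intermediate value theorem on $(q_1,q_2)$ and on $(q_2,q_3)$ gives two distinct zeros. Your extra check that the positive-then-negative sign pattern violates the ordered sign condition \eqref{eq:ordered_sign} is a useful addition the paper's one-line proof omits. Two zeros alone rule out only the strict property of Definition~\ref{def:strict_single_crossing}, not the weak one, which permits a zero band.
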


\subsection{Near-resonance and conditioning}
\label{subsec:near_resonance}

The denominator $d=1-B$ must be monitored: for perturbations,
$\displaystyle \delta\Gamma=\frac{\delta A}{1-B}+\frac{A\,\delta B}{(1-B)^2}+\cdots$,
so reconstruction is ill-conditioned as $B\to1$. A certificate should report
$A,B,1-B,\Gamma,|A|/|1-B|$ with validated error bounds. Near resonance, a small
change in the sensitivities produces a large change in $S$: this identifies a
regime of high marginal feedback gain in which the minimum-size policy is
structurally fragile.

\subsection{Summary}
\label{subsec:stationary_correction_summary}

The apparently nonlocal stationary adjoint reduces to two local terminal-value
ODEs and one scalar closure $\Gamma=A+\Gamma B$, giving, under $1-B\neq0$,
\[
\Gamma=\frac{A}{1-B},
\qquad
S=S_{\mathrm{red}}-\frac{A}{1-B}\psi,
\qquad
B|_{r=0}=\Phi'(E^*).
\]
This furnishes an exact reconstruction of the switching function, small-feedback
single-crossing criteria, a threshold-displacement formula, diagnostics for
multiple-switch and no-threshold regimes, a validated certificate, and the
identification of the closure gain with the discounted feedback gain. The reduction
holds for any prescribed stationary control; the minimum-size conclusion applies to
any stationary canonical bang--bang extremal of the infinite-horizon discounted
problem (Definition~\ref{def:canonical_candidate}). The conclusion is not that
nonlocality always preserves a minimum-size extremal, nor that a stationary
extremal is globally optimal, but that the exact scalar correction---anchored to
the classical closure derivative---identifies precisely when single crossing holds
and when it does not.

% ============================================================================
\section{Ecological and numerical consequences}
\label{sec:concrete}

We now instantiate the theory in a calibrated, density-dependent von Bertalanffy \cite{von1938quantitative,de2008simplifying}
model and use it to extract ecological consequences. The aim of this section is
not to confirm the preceding theorems---the rank-one reduction and the identity
$B|_{r=0}=\Phi'(E^*)$ are exact---but to use them as instruments that expose a
concrete, and previously implicit, ecological trade-off and to map the
circumstances under which a conventional minimum-size harvest rule remains valid.

\subsection{Calibrated model}
\label{subsec:calibration}

Lengths are in centimetres and time in years. Growth is von Bertalanffy with
multiplicative crowding suppression, mortality is additive in the environmental
load, and the environment is a biomass-weighted competition index:
\[
g(E,l)=\frac{k(L_\infty-l)}{1+\rho_g E},\quad
\mu(E,l)=\mu_0+\rho_\mu E,\quad
\chi(l)=\Big(\tfrac{l}{l_m}\Big)^{3},\quad
E=\int_{l_0}^{l_m}\chi(l)x(l)\dd l.
\]
Because $L_\infty>l_m$, growth is bounded below by
$g_{\min}(M)=k(L_\infty-l_m)/(1+\rho_g M)>0$ on each bounded environmental interval
$[0,M]$, which is the local form of \eqref{eq:H2}; the stationary analysis uses it
on the closure interval $[0,M_{\mathrm{st}}]$, with $M_{\mathrm{st}}=\Phi(0)$ (the
maximum of the decreasing closure map, computed per parameter set), and the
finite-horizon analysis uses it on $[0,M_T]$. Its infimum over all $E\ge0$ is zero,
so no global floor is claimed; one could impose a global floor by the variant
$g=k(L_\infty-l)[\varepsilon_g+(1-\varepsilon_g)/(1+\rho_gE)]$ with
$\varepsilon_g>0$, which we do not need here. The compensatory signs
\eqref{eq:H4} hold with
$\partial_Eg=-k(L_\infty-l)\rho_g/(1+\rho_gE)^2\le0$ and $\partial_E\mu=\rho_\mu\ge0$.
Fecundity is given by
$m(l)=(l/l_m)^3/(1+e^{-(l-l_{50})/\delta_m})$. The realised value per harvested
individual is taken in two forms: a monotone form $c(l)=(l/l_m)^3$ (value rising
with body weight) and a dome-shaped form
$c(l)=\exp(-((l-l_{\mathrm{val}})/w_{\mathrm{val}})^2)$ representing a harvest
value peaked at an intermediate size $l_{\mathrm{val}}$ (for instance a
size-graded market premium). We emphasise that this dome enters the payoff $c$
only; it is not a gear-selectivity constraint on the feasible fishing mortality.
Baseline parameters and ranges are listed in Table~\ref{tab:params}; all
quantities below are computed on a length grid with the trapezoidal closure and
the exact terminal-value adjoints of Section~\ref{sec:stationary_switching}.

\begin{table}[t]
\centering
\small
\caption{Model parameters: definitions, units, baseline values, plausible ranges,
and basis. The environment $E$ is a dimensionless biomass-weighted competition
index; feedback coefficients $\rho_g,\rho_\mu$ are calibrated so that the
crowding factors $1+\rho_gE$ and the mortality increment $\rho_\mu E$ are of
order one at the operating point.}
\label{tab:params}
\begin{tabular}{lllll}
\toprule
Symbol & Definition & Unit & Baseline & Range \\
\midrule
$l_0,l_m$ & recruitment / exit size & cm & $10,\,100$ & fixed \\
$k$ & von Bertalanffy rate & yr$^{-1}$ & $0.20$ & $0.14$--$0.26$ \\
$L_\infty$ & asymptotic length & cm & $130$ & $121$--$139$$^{\dagger}$ \\
$\mu_0$ & baseline mortality & yr$^{-1}$ & $0.20$ & $0.14$--$0.26$ \\
$\rho_g$ & growth-suppression feedback & -- & $0.30$ & $0$--$3$ \\
$\rho_\mu$ & mortality-increase feedback & -- & $0.15$ & $0$--$0.8$ \\
$u_{\max}$ & maximum fishing mortality & yr$^{-1}$ & $0.40$ & $0.28$--$0.52$ \\
$r$ & discount rate & yr$^{-1}$ & $0.05$ & $0.02$--$0.08$ \\
$p$ & recruitment flux & yr$^{-1}$ & $1$ (norm.) & fixed \\
$l_{50},\delta_m$ & maturity ogive & cm & $45,\,5$ & --- \\
$l_{\mathrm{val}},w_{\mathrm{val}}$ & dome harvest-value centre / width & cm & $52,\,16$ & $30$--$90$ \\
\bottomrule
\end{tabular}\vspace{2pt}
{\footnotesize $^{\dagger}$ In the uncertainty study $L_\infty$ is sampled as
$l_m$ plus a $\pm30\%$ perturbation of the growth gap $L_\infty-l_m$, which keeps
$L_\infty>l_m$ and hence $g>0$ throughout the size range.}
\end{table}

\subsection{The biological mechanism}
\label{subsec:bio_mechanism}

Figure~\ref{fig:mechanism} displays the mechanism at the unfished operating point
($E^*\approx0.63$). As the environmental load rises, the stationary profile
(panel a) does not fall uniformly: near the recruitment size, the density
strictly increases. This occurs because the boundary fixes the entering flux while
crowding-suppressed growth raises the residence density $p/g(E,l_0)$ of recruits.
The sensitivity $\partial_Ex_E$ (panel b) is therefore positive over an
inflow band---about 21 cm wide at the unfished point (spanning from the recruitment size $l_0=10$\,cm to a crossover at $l\approx 31$\,cm) and about 24 cm wide under
the computed canonical harvest---and negative thereafter. Panel (c) resolves this into the
two competing integrals of Section~\ref{sec:stationary}: a positive
residence-time term $\alpha=-\partial_Eg/g$ concentrated near $l_0$, and a
negative cumulative term $\int_{l_0}^lb$ that accrues with size as mortality and
delayed progression remove individuals from the larger classes. Their crossover
is the boundary of the small-size accumulation region.

\begin{figure}[htbp]
\centering
\includegraphics[width=\textwidth]{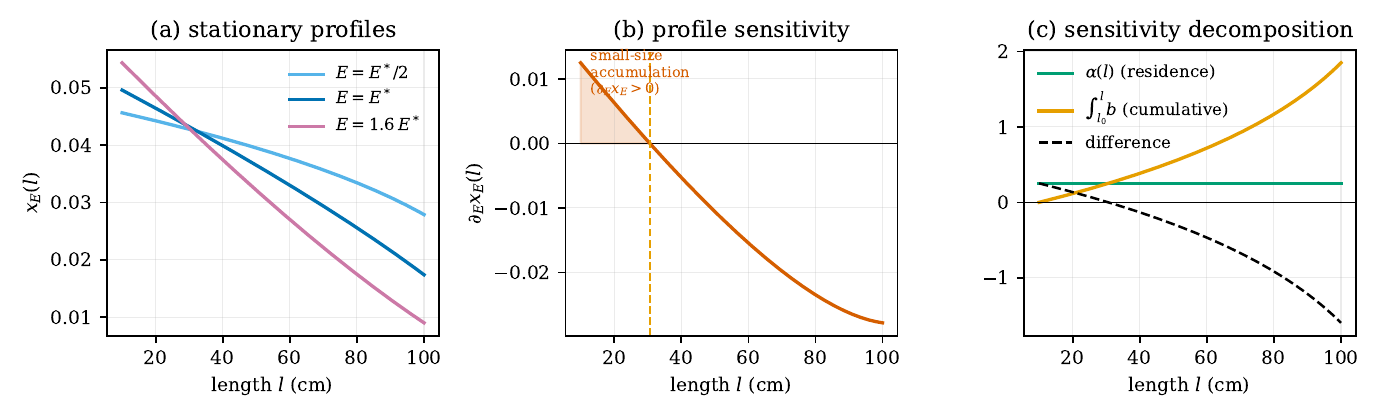}
\caption{Biological mechanism. (a) Stationary profiles $x_E$ at increasing
environmental load; the inflow density rises with $E$. (b) Profile sensitivity
$\partial_Ex_E$, positive over a small-size accumulation band (shaded) and
negative beyond it. (c) Decomposition of
$\partial_E\log x_E$ into the local residence-time gain $\alpha$ and the
cumulative survival/progression loss $\int_{l_0}^lb$.}
\label{fig:mechanism}
\end{figure}

Whether this local accumulation overturns global behaviour is decided by the
integrated balance $\Phi'(E)=\mathsf A(E)-\mathsf C(E)$ of
Theorem~\ref{thm:integrated_monotonicity}. Figure~\ref{fig:closure} shows the
closure map $\Phi$ crossing the identity once and the decomposition
$\mathsf A-\mathsf C$ remaining negative across the relevant range: at baseline
$\mathsf A\approx0.16$ while $\mathsf C\approx0.72$, so $\Phi'(E^*)\approx-0.56$
and the steady state is unique. The accumulation near the inflow is genuine but
the cumulative loss dominates the integral.

\begin{figure}[t]
\centering
\includegraphics[width=0.86\textwidth]{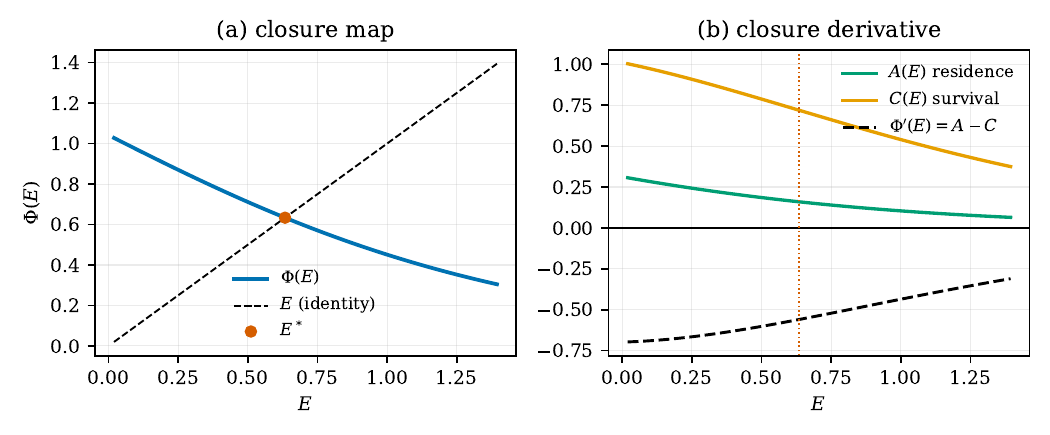}
\caption{Stationary closure. (a) $\Phi(E)$ meets the identity once at $E^*$.
(b) The residence amplification $\mathsf A$, the cumulative reduction
$\mathsf C$, and $\Phi'=\mathsf A-\mathsf C<0$: uniqueness holds because the
cumulative loss exceeds the inflow gain.}
\label{fig:closure}
\end{figure}
\subsection{Where the mechanism matters}
\label{subsec:regime_maps}

Figure~\ref{fig:mechmap} maps the mechanism over the feedback plane
$(\rho_g,\rho_\mu)$. To probe uniqueness as a global property---rather than only
the local slope at the fixed point---panel (a) shows the sufficient-condition
diagnostic $\max_{E\in[0,M_{\mathrm{st}}]}\Phi'(E)$ of
Theorem~\ref{thm:integrated_monotonicity}, evaluated on the stationary closure
interval $[0,M_{\mathrm{st}}]$ with $M_{\mathrm{st}}=\Phi(0)$ recomputed for each
parameter pair: where it is negative, $\Phi'<0$ throughout
$[0,M_{\mathrm{st}}]$ and the steady state is unique. It is negative over all but
$\approx4.4\%$ of the sampled plane, the exception being the extreme
growth-limited corner (large $\rho_g$, negligible $\rho_\mu$) where the residence
amplification $\mathsf A$, although saturating ($\alpha=\rho_g/(1+\rho_gE)$),
is no longer dominated by the cumulative loss across the whole interval. At the
no-feedback corner $\rho_g=\rho_\mu=0$ the rates are $E$-independent and
$\Phi'\equiv0$, so the condition is non-strict there. At the computed fixed point
the local slope $\Phi'(E^*)$ is nonpositive throughout the plane and strictly
negative away from the no-feedback corner. The
width of the small-size accumulation region (panel b), by
contrast, grows steadily with $\rho_g$ and can exceed a third of the size range:
the accumulation phenomenon is widespread even though it does not destabilise the
equilibrium.

\begin{figure}[t]
\centering
\includegraphics[width=0.92\textwidth]{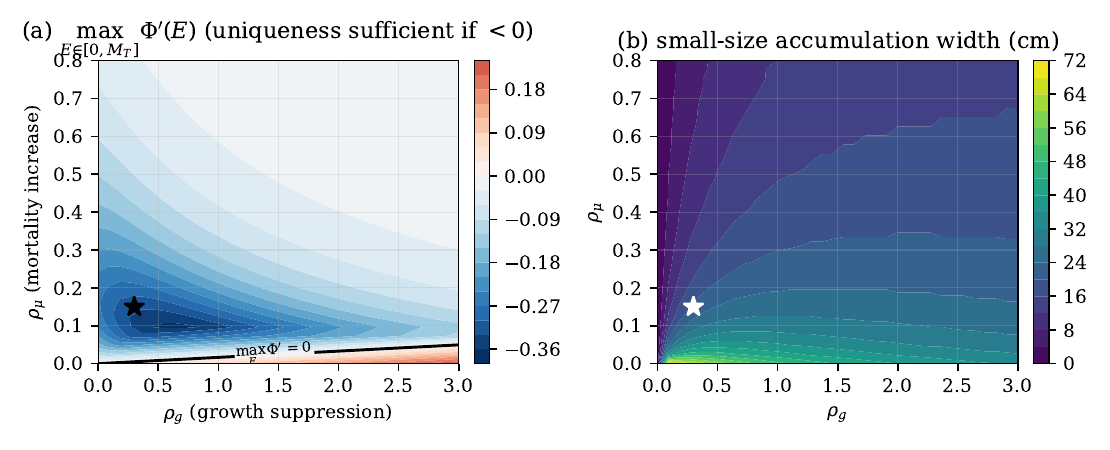}
\caption{Mechanism map over $(\rho_g,\rho_\mu)$ (star: baseline). (a) The
sufficient-uniqueness diagnostic $\max_{E\in[0,M_{\mathrm{st}}]}\Phi'(E)$ on the
stationary closure interval ($M_{\mathrm{st}}=\Phi(0)$ per parameter pair):
negative (hence $\Phi'<0$ throughout $[0,M_{\mathrm{st}}]$ and the equilibrium
unique) over all but the extreme growth-limited corner. (b) Width (cm) of the
small-size accumulation region, which broadens with growth suppression.}
\label{fig:mechmap}
\end{figure}

\subsection{Switching correction and harvest policy}
\label{subsec:policy_numerics}

For monotone value, the reduced switching function is increasing and the rank-one
correction $-\Gamma\psi$ is a modest positive shift ($\Gamma\approx-0.08$,
$1-B\approx1.25$, far from resonance): a single upward crossing survives and the
canonical extremal is a minimum-size policy (Figure~\ref{fig:switching}a; the
maximum-condition residual is zero to machine precision). This is not special to
the baseline: at every resolved point of the feedback plane, policy iteration
returned a verified minimum-size stationary canonical candidate. The picture
changes when the harvest value is dome-shaped
and peaks at small sizes. Protecting small individuals then no longer lets them
grow into a more valuable class, and the canonical switching function acquires
additional crossings (Figure~\ref{fig:switching}b shows a self-consistent
two-crossing extremal, residual zero): the computed stationary canonical
candidate is a harvested window, not a threshold.

\begin{figure}[t]
\centering
\includegraphics[width=0.92\textwidth]{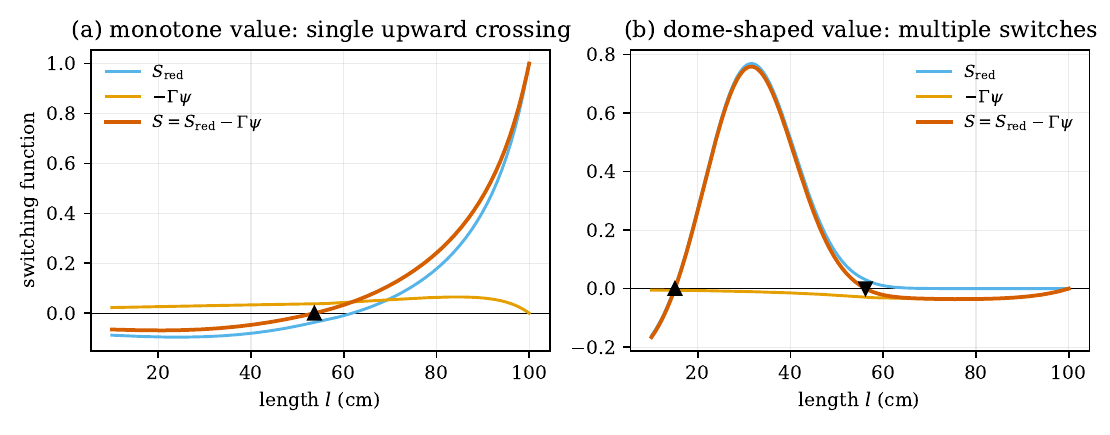}
\caption{Rank-one switching correction $S=S_{\mathrm{red}}-\Gamma\psi$.
(a) Monotone value: a single upward crossing (minimum-size policy).
(b) Dome-shaped value peaked at small sizes: the correction yields multiple
crossings, i.e.\ a harvested size window rather than a threshold (markers: upward
$\blacktriangle$ / downward $\blacktriangledown$ crossings). Both panels are
self-consistent canonical extremals (maximum-condition residual zero).}
\label{fig:switching}
\end{figure}

Figure~\ref{fig:policymap} classifies the canonical extremal across the
dome-value plane $(\rho_g,l_{\mathrm{val}})$ using self-consistent policy
iteration. Minimum-size policies occupy $\approx90.7\%$ of the plane; a band of
multiple-switch (window) policies ($\approx7.7\%$) appears where the value peak
sits at small sizes, and a thin set of cases ($\approx1.7\%$) does not reach a
verified fixed point and is labelled unconverged rather than counted as a
regime. Where the window regime occurs, the computed stationary canonical
candidate is not a minimum-size policy, demonstrating that the canonical
conditions do not generally enforce threshold structure; the transition is the
loss of single crossing flagged by the fine-grid diagnostic of
Section~\ref{sec:stationary_switching} (a grid-level sign-pattern test, not the
interval-validated certificate of Theorem~\ref{thm:validated_certificate}, which
we do not implement numerically here).

\begin{figure}[t]
\centering
\includegraphics[width=0.66\textwidth]{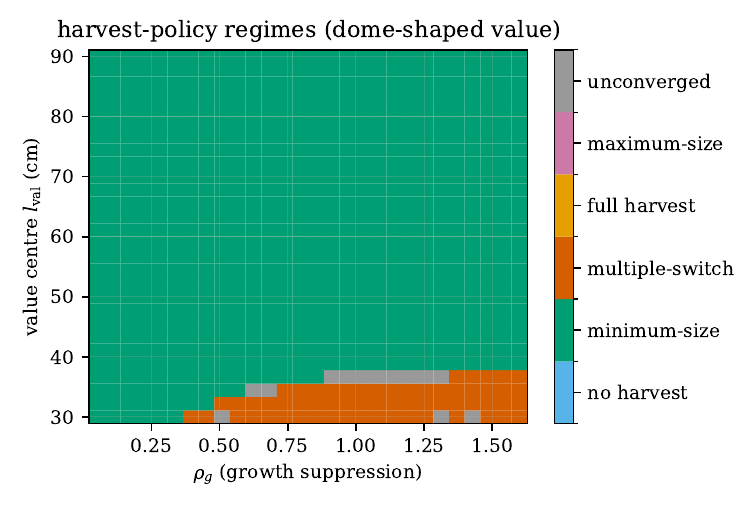}
\caption{Harvest-policy regimes under dome-shaped value over
$(\rho_g,l_{\mathrm{val}})$, from self-consistent policy iteration. Minimum-size
policies dominate; window (multiple-switch) policies appear when the value peak is
at small sizes; unconverged cases are shown separately. Failed computations are
excluded (none occurred), not coded as a regime.}
\label{fig:policymap}
\end{figure}

\subsection{Ecological trade-offs}
\label{subsec:tradeoffs}

Figure~\ref{fig:tradeoffs} traces the consequences of strengthening feedback
(scaling $\rho_g,\rho_\mu$ jointly). Equilibrium yield and biomass fall as
feedback intensifies; the computed canonical threshold $L^*$ shifts downward, so
stronger compensation calls for protecting less of the size range; and the
replacement diagnostics decline. Two replacement quantities must be
distinguished. The equilibrium-adjusted spawning-potential ratio
$\mathrm{SPR}_{\mathrm{eq}}$ stays near $0.5$ across the range (baseline $0.53$);
because its numerator and denominator are evaluated at different equilibrium
environments, this is not the conventional fixed-vital-rate SPR and is not by
itself a sustainability criterion. The invasion replacement quantity, by
contrast, is $\mathcal R_0(u)=\mathcal R(0;u)\approx0.415<1$ at baseline; moreover
the replacement functional satisfies
$\sup_{E\in[0,M_{\mathrm{st}}]}\mathcal R(E;u)=0.415<1$ (the maximum is at $E=0$),
so the harvested stock fails to replace itself at every environment in the
closure interval. The forced model nevertheless maintains a positive stationary
population through the prescribed recruitment flux $p$: a moderate
equilibrium-adjusted SPR coexists with failure of low-density (indeed all-density)
replacement. This is the forced-persistence caveat of
Section~\ref{sec:persistence}, made quantitative. Representative regimes are
summarised in Table~\ref{tab:regimes}.

\begin{figure}[t]
\centering
\includegraphics[width=\textwidth]{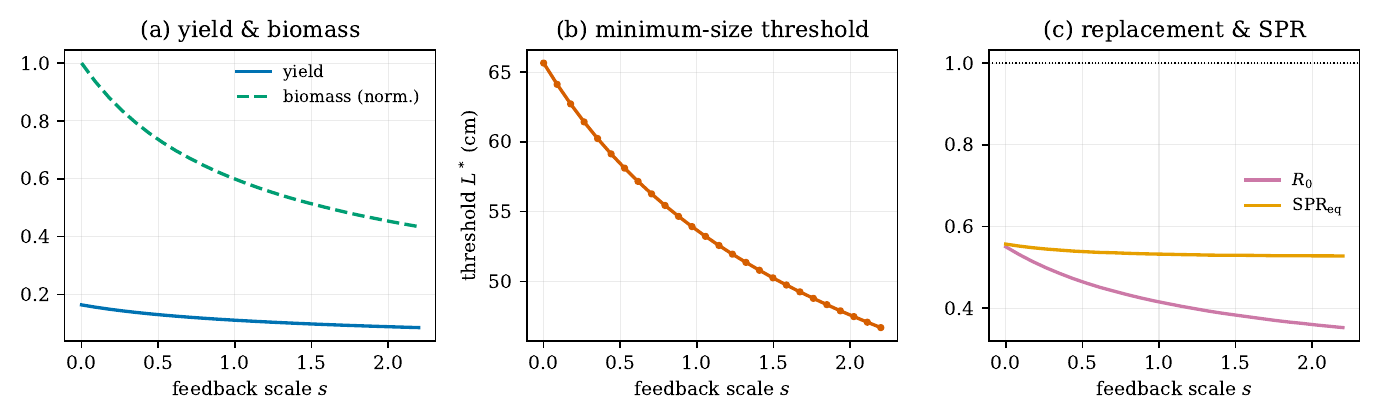}
\caption{Ecological trade-offs versus feedback strength $s$ (joint scaling of
$\rho_g,\rho_\mu$). (a) Yield and (normalised) biomass decline. (b) The computed
canonical threshold $L^*$ falls. (c) The equilibrium-adjusted SPR stays near $0.5$
while the invasion replacement quantity $\mathcal R_0$ remains below one: a moderate equilibrium-adjusted SPR coexists with failure of low-density
replacement.}
\label{fig:tradeoffs}
\end{figure}

\begin{table}[t]
\centering
\footnotesize
\setlength{\tabcolsep}{4.0pt}
\caption{Representative regimes (all computed as self-consistent canonical
extremals by policy iteration). $E^*$, the closure derivative $\Phi'(E^*)$, its
parts $\mathsf A,\mathsf C$, the rank-one scalars $A,B,1-B,\Gamma$, the policy
type, the ecological outputs (yield, biomass, $\mathcal R_0$,
$\mathrm{SPR}_{\mathrm{eq}}$), and the maximum-condition residual ``res''
(mismatch fraction; zero indicates an exact canonical fixed point on the grid).
The two dome rows use the dome payoff $c$ at baseline feedback with the value peak
at an intermediate size (``baseline peak'', $l_{\mathrm{val}}=52$) and at a small
size (``small-size peak'', $l_{\mathrm{val}}=30$, $\rho_g=1$); the former is a
minimum-size candidate, the latter a self-consistent harvested window
(the case of Figure~\ref{fig:switching}b).}
\label{tab:regimes}

\begin{tabular}{lrrrrrrrr}
\toprule
Regime & $E^*$ & $\Phi'$ & $\mathsf A$ & $\mathsf  C$ & $A$ & $B$ & $1{-}B$ & $\Gamma$ \\
\midrule
Baseline & 0.361 & -0.266 & 0.098 & 0.364 & -0.0946 & -0.2470 & 1.247 & -0.0759 \\
Weak feedback & 0.484 & -0.112 & 0.046 & 0.158 & -0.0390 & -0.1047 & 1.105 & -0.0353 \\
Growth-limited & 0.380 & -0.243 & 0.265 & 0.508 & -0.1065 & -0.2423 & 1.242 & -0.0857 \\
Mortality-limited & 0.264 & -0.431 & 0.026 & 0.457 & -0.1465 & -0.3913 & 1.391 & -0.1053 \\
Strong symmetric & 0.240 & -0.496 & 0.161 & 0.657 & -0.1795 & -0.4573 & 1.457 & -0.1232 \\
Dome value, baseline peak & 0.259 & -0.205 & 0.072 & 0.277 & -0.0862 & -0.1899 & 1.190 & -0.0725 \\
Dome value, small-size peak & 0.280 & -0.516 & 0.219 & 0.735 & 0.0343 & -0.4556 & 1.456 & 0.0235 \\
\bottomrule
\end{tabular}

\vspace{1.5em}

\begin{tabular}{llrrrrl}
\toprule
Regime & policy & yield & biom. & $\mathcal R_0$ & SPR & res \\
\midrule
Baseline & min-size & 0.1111 & 0.361 & 0.415 & 0.532 & 0.0e+00 \\
Weak feedback & min-size & 0.1393 & 0.484 & 0.488 & 0.542 & 0.0e+00 \\
Growth-limited & min-size & 0.1088 & 0.380 & 0.426 & 0.480 & 0.0e+00 \\
Mortality-limited & min-size & 0.0871 & 0.264 & 0.360 & 0.564 & 0.0e+00 \\
Strong symmetric & min-size & 0.0792 & 0.240 & 0.338 & 0.528 & 0.0e+00 \\
Dome value, baseline peak & min-size & 0.2692 & 0.259 & 0.272 & 0.369 & 0.0e+00 \\
Dome value, small-size peak & multi-switch & 0.3075 & 0.280 & 0.412 & 0.490 & 0.0e+00 \\
\bottomrule
\end{tabular}
\end{table}

\subsection{Numerical validation and robustness}
\label{subsec:validation}

The exact results provide stringent internal checks. The unification identity
$B|_{r=0}=\Phi'(E^*)$ holds to machine precision (residual $5\times10^{-16}$). The
algebraic reconstruction $\lambda=\lambda_{\mathrm{red}}+\Gamma\psi$ is exact by
construction, so $\|\lambda-\lambda_{\mathrm{red}}-\Gamma\psi\|_\infty$ is zero in
floating point and is not an independent test; we instead report the
$\Gamma$-consistency (closure) residual $|\mathcal G^*[\lambda]-\Gamma|$,
which probes the discrete closure and is $\sim10^{-17}$
(Table~\ref{tab:convergence}). Recomputing the canonical solution at each
resolution, the operating point and rank-one scalars are stable to approximately
$10^{-3}$ by $N=800$. The successive changes are small but non-monotone at finer
resolutions, associated with the grid-dependent placement of the switching point,
so the present experiment does not provide a clean empirical convergence-order
estimate; a dedicated rate study would align the switch with each grid and
integrate separately on either side.

\begin{table}[t]
\centering
\small
\caption{Mesh convergence and residuals (canonical solution recomputed at each
resolution). Columns: grid size $N$; operating point $E^*$; closure derivative
$\Phi'(E^*)$; gain $B$; correction $\Gamma$; successive change
$|E^*_N-E^*_{N/2}|$; observed order; the $\Gamma$-consistency (closure) residual
$|\mathcal G^*[\lambda]-\Gamma|$; and the identity residual $|B(0)-\Phi'(E^*)|$.
The successive change is small ($\lesssim10^{-4}$) but non-monotone, reflecting
grid-dependent placement of the switch; we therefore do not claim a clean
empirical convergence order.}
\label{tab:convergence}
\begin{tabular}{rrrrrrrll}
\toprule
$N$ & $E^*$ & $\Phi'(E^*)$ & $B$ & $\Gamma$ & $|\Delta E^*|$ & order & $\gamma$-res & id-res \\
\midrule
200 & 0.36127 & -0.26620 & -0.24698 & -0.07589 & 2.29e-04 & -- & 0.0e+00 & 0.0e+00 \\
400 & 0.36130 & -0.26623 & -0.24700 & -0.07589 & 2.00e-04 & 1.20 & 1.4e-17 & 0.0e+00 \\
800 & 0.36131 & -0.26624 & -0.24701 & -0.07589 & 1.88e-04 & 1.11 & 2.8e-17 & 1.1e-16 \\
1600 & 0.36132 & -0.26624 & -0.24702 & -0.07589 & 1.82e-04 & -4.96 & 1.4e-17 & 3.3e-16 \\
3200 & 0.36150 & -0.26636 & -0.24713 & -0.07591 & --      & -- & 1.4e-17 & 5.0e-16 \\
\bottomrule
\end{tabular}
\end{table}

Robustness to parameter uncertainty is assessed by Monte-Carlo sampling
($\pm30\%$ uniform on $k,\mu_0,\rho_g,\rho_\mu,u_{\max}$, and on the growth gap
$L_\infty-l_m$ so that $L_\infty>l_m$ and $g>0$ throughout;
Figure~\ref{fig:robust}). Each draw is solved with the window-admitting policy
iteration and its maximum-condition residual is verified. The local closure slope
$\Phi'(E^*)$ is negative in all admissible draws, so the equilibrium is
locally unique throughout the sample; the equilibrium SPR has median $0.53$; and a
standardized-regression sensitivity analysis (linear fit $R^2=0.94$) ranks
$\mu_0$, $\rho_\mu$, and $u_{\max}$ as the dominant controls on the closure
derivative, with $L_\infty$ now minor---the earlier prominence of $L_\infty$ was
an artifact of draws that violated $L_\infty>l_m$. Because the solver admits
harvest windows, the finding that every admissible draw returns a minimum-size
canonical extremal is a genuine result, not an artifact of a threshold-only
search.

\begin{figure}[t]
\centering
\includegraphics[width=\textwidth]{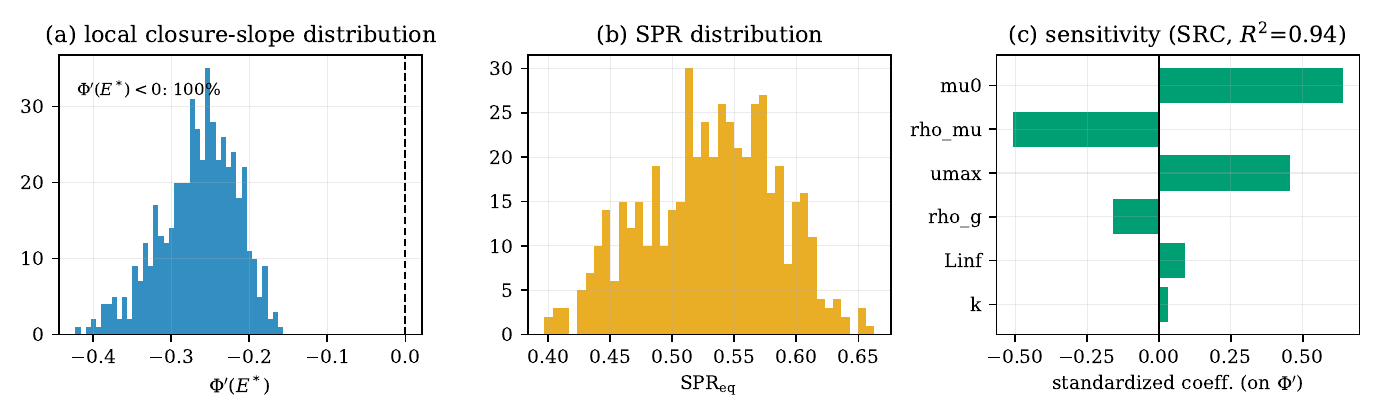}
\caption{Robustness ($\pm30\%$ admissible parameter sampling, $L_\infty>l_m$
enforced). (a) Distribution of the local closure slope $\Phi'(E^*)$: negative in
all admissible draws. (b) Distribution of $\mathrm{SPR}_{\mathrm{eq}}$.
(c) Standardized-regression sensitivity of $\Phi'(E^*)$ to the sampled parameters
(linear fit $R^2=0.94$).}
\label{fig:robust}
\end{figure}

\subsection{Ecological conclusion}
\label{subsec:eco_conclusion}

The analysis identifies a previously hidden ecological trade-off. Crowding-induced
growth suppression increases the residence density of small individuals near the
recruitment size, whereas elevated mortality and delayed progression remove
individuals from the larger size classes. The integrated balance of these two
effects---not the sign of either feedback alone---determines whether the
stationary equilibrium is unique, how far intrinsic replacement falls below the
equilibrium spawning-potential ratio, and whether a conventional minimum-size
harvest rule remains the canonical policy. In the calibrated model the balance is robustly
tipped toward the cumulative loss, so the equilibrium is unique and the
minimum-size rule survives for value that rises with size; yet the same feedback
drives the intrinsic basic reproduction number under the computed canonical harvest below one, so
the exploited stock persists only through recruitment subsidy, and when the
harvest value peaks at small sizes the computed canonical candidate is a harvested window rather
than a threshold. These distinctions are invisible to a pointwise or
fixed-vital-rate analysis and are made quantitative here by the closure derivative
$\Phi'(E^*)=\mathsf A-\mathsf C$ and its exact identity with the discounted
feedback gain.

\appendix

We provide detailed proofs for all the mathematical conclusions in the main text.

\section{Proofs in Section~\ref{sec:stationary}}

\begin{proof}[Proof of Proposition~\ref{prop:frozen_stationary_profile}]
The transformation $y_E=g(E,\cdot)x_E$ reduces the equation to a scalar linear
ODE with unique solution $p\,\sigma_E$. Dividing by $g(E,l)>0$ gives
\eqref{eq:stationary_profile}; since $0<\sigma_E\le1$ and
$g\ge g_{\min}(M_{\mathrm{st}})$ on $[0,M_{\mathrm{st}}]\times\Om$, the bound
follows.
\end{proof}   

\begin{proof}{Proof of Proposition~\ref{prop:Phi_existence}}
    Continuity on $[0,M_{\mathrm{st}}]$ follows from continuity of $g,\mu$ in $E$, the
positive lower bound $g_{\min}(M_{\mathrm{st}})$ of \eqref{eq:H2}, and dominated
convergence. By \eqref{eq:Hst} the continuous $F(E):=\Phi(E)-E$ satisfies
$F(0)=\Phi(0)\ge0$ and $F(M_{\mathrm{st}})=\Phi(M_{\mathrm{st}})-M_{\mathrm{st}}\le0$,
so the intermediate value theorem gives a fixed point. Positivity follows from
$x_E>0$.
\end{proof}

\begin{proof}[Proof of Proposition~\ref{prop:pointwise_failure}]
At $l=l_0$ the cumulative integral in \eqref{eq:dE_log_x} vanishes, giving
$\partial_E\log x_E(l_0)=\alpha(E,l_0)\ge0$; multiplying by $x_E(l_0)>0$ gives
\eqref{eq:inflow_sensitivity}. Strictness and the local statement follow from
$\partial_Eg(E,l_0)<0$ and continuity of the bracket.
\end{proof}

\begin{proof}[Proof of Theorem~\ref{thm:integrated_monotonicity}]
Strict decrease follows from \eqref{eq:Phi_prime_tail}; existence from
Proposition~\ref{prop:Phi_existence}. Two fixed points $E_1<E_2$ would give
$E_1=\Phi(E_1)>\Phi(E_2)=E_2$, a contradiction. Uniqueness of the profile is
Proposition~\ref{prop:frozen_stationary_profile}.
\end{proof}

\section{Proofs in Section~\ref{sec:wellposed}}

\begin{proof}[Proof of Lemma~\ref{lem:green_trace}]
\emph{Amplitude.} The attenuation exponent
$\mathcal A_E(a,t;l)=\exp\!\big(-\!\int_a^t(\partial_lg+\mu+u)\big)$ need not be
$\le1$, since $\partial_lg$ may be negative; using $\mu+u\ge0$ and
$(\partial_lg)\ge-(\partial_lg)^-$,
\begin{equation}\label{eq:attenuation_bound}
0\le\mathcal A_E(a,t;l)
\le\exp\!\Big(\int_a^t\|(\partial_lg(E(s),\cdot))^-\|_{L^\infty}\dd s\Big)
\le e^{T\|\partial_lg\|_{L^\infty}},
\end{equation}
so $x$ is bounded by $\max(\|\phi\|_\infty,\sup_s p(s)/g_{\min})\,
e^{T\|\partial_lg\|_\infty}=:C_T^0$; all amplitude estimates hold with this larger
constant.

\emph{Existence of the one-sided trace.} Boundedness alone does not give a trace;
we argue from the representation \eqref{eq:frozen_representation}. As
$l\uparrow l_m$, the backward characteristic footpoint $\eta_E(0;t,l)$ and the
entrance time $\tau_E(t,l)$ have one-sided limits (each is monotone in $l$);
$\phi\in BV$ and $p\in BV$ have one-sided limits at those footpoints; for a.e.\
$s$, $u(s,\cdot)\in BV(\Om)$ has a one-sided trace along the characteristic; and
the attenuation integral converges by dominated convergence using
\eqref{eq:attenuation_bound}. Hence $x(t,l_m^-)=\lim_{l\uparrow l_m}x(t,l)$ exists
for a.e.\ $t$ and is bounded; multiplying by $g\le g_{\max}$ gives
$\beta\in L^\infty(0,T)$.

\emph{Green identity.} Multiply \eqref{eq:state} by $\psi\in W^{1,\infty}(\Om)$,
integrate over $(t,t+h)\times\Om$, and integrate the flux term by parts in $l$;
the boundary contributions are the prescribed inflow flux
$g(E,l_0)x(\cdot,l_0)=p$ at $l_0$ and the outflow flux $\beta$ at $l_m$ (which
exists by the previous step). The manipulation does not require $\psi(l_m)=0$,
giving \eqref{eq:green_time_identity}.
\end{proof}

\begin{proof}[Proof of Lemma~\ref{lem:frozen}]
Existence, uniqueness, and nonnegativity follow from
\eqref{eq:frozen_representation}. The $L^1$ bound is the population estimate; the
$L^\infty$ bound uses the bounded data, $g\ge g_{\min}$, and the attenuation bound
\eqref{eq:attenuation_bound} (which is $\le e^{T\|\partial_lg\|_\infty}$, not
necessarily $\le1$). The outflow trace bound is Lemma~\ref{lem:green_trace}. None
of these uses a variation bound on $E$.
\end{proof}

\begin{proof}[Proof of Lemma~\ref{lem:env_lipschitz}]
Apply the Green identity \eqref{eq:green_time_identity} with $\psi=\chi\in
W^{1,\infty}(\Om)$ on $(t,t+h)$, divide by $h$, and let $h\downarrow0$: the
left side tends to $(\mathcal FE)'(t)$ at Lebesgue points, the interior terms to
$\int_\Om(g x_E\chi'-(\mu+u)x_E\chi)$, the inflow term to $\chi(l_0)p(t)$, and the
outflow term to $-\chi(l_m)g(E,l_m)x_E(t,l_m^-)$, giving
\eqref{eq:E_prime_identity}. The bound \eqref{eq:E_prime_bound} follows from
$|p|\le\|p\|_\infty$, the trace bound $g(E,l_m)x_E(\cdot,l_m^-)\le C_T^0$, and the
$L^1$ and pointwise bounds on $x_E$, all from \eqref{eq:frozen_uniform_bounds};
every constant is independent of $E$. Integrating gives the Lipschitz bound.
\end{proof}

\begin{proof}[Proof of Lemma~\ref{lem:frozen_bv}]
Fix $t$. For $s\le t$ the map $l\mapsto\eta_E(s;t,l)$ is monotone (characteristics
do not cross), as are the footpoint and entrance-time maps; composition with a
monotone map does not increase total variation, so
$\TV_l\big(f(s,\eta_E(s;t,\cdot))\big)\le\TV_\Om(f(s,\cdot))$ for any
$f(s,\cdot)\in BV(\Om)$, with $f=\partial_lg+\mu+u$ giving
$\TV_l\le\|\partial_lg\|_{BV}+\|\mu\|_{BV}+C_u$.

The attenuation exponent on the boundary-fed branch is
$H(l)=\int_{\tau_E(t,l)}^t f(s,\eta_E(s;t,l))\dd s$, whose lower limit moves with
$l$. We prove the key estimate
\begin{equation}\label{eq:moving_limit_TV}
\TV_l H
\le\int_0^t\TV_l\big(f(s,\eta_E(s;t,\cdot))\big)\dd s
+\|f\|_{L^\infty}\,\TV_l\big(\tau_E(t,\cdot)\big).
\end{equation}
For a partition $l_0\le a_0<\cdots<a_k\le l_m$, write each increment as
\[
H(a_{j})-H(a_{j-1})
=\underbrace{\int_{\tau_E(t,a_{j-1})}^{t}\!\big[f(s,\eta_E(s;t,a_{j}))-f(s,\eta_E(s;t,a_{j-1}))\big]\dd s}_{\text{(I): common time interval}}
\;-\;\underbrace{\int_{\tau_E(t,a_{j-1})}^{\tau_E(t,a_{j})}\! f(s,\eta_E(s;t,a_{j}))\dd s}_{\text{(II): differing lower limits}}.
\]
Summing $|\text{(I)}|$ over $j$ and applying Fubini gives at most
$\int_0^t\TV_l(f(s,\eta_E(s;t,\cdot)))\dd s$; summing $|\text{(II)}|$ gives at most
$\|f\|_{L^\infty}\sum_j|\tau_E(t,a_j)-\tau_E(t,a_{j-1})|\le\|f\|_{L^\infty}\TV_l(\tau_E(t,\cdot))$.
Taking the supremum over partitions yields \eqref{eq:moving_limit_TV}. The first
term is bounded by $T(\|\partial_lg\|_{BV}+\|\mu\|_{BV}+C_u)$ as above. For the
second, $\tau_E(t,\cdot)$ is monotone in $l$ with values in $[0,t]$, so
$\TV_l(\tau_E(t,\cdot))\le t\le T$ directly from monotonicity (the maximum possible
travel time), with no Lipschitz claim required. Hence
$\TV_l H\le C(T,C_u)$, and $\TV(\mathcal A_E)\le\|\mathcal A_E\|_\infty\TV_l H$
since $h\mapsto e^{-h}$ is Lipschitz on bounded sets and $\mathcal A_E$ is bounded
by \eqref{eq:attenuation_bound}.

For the boundary density, the $L_E$-Lipschitz bound on $E$ makes
$t\mapsto1/g(E(t),l_0)$ Lipschitz, hence of bounded variation, with
$\TV_{(0,T)}\big(1/g(E(\cdot),l_0)\big)\le g_{\min}^{-2}\|\partial_Eg\|_\infty L_E
T$; combined with $p\in BV(0,T)$ and composition with the monotone map
$\tau_E(t,\cdot)$ (composition with a monotone map does not increase total
variation), the boundary factor $p(\tau_E)/g(E(\tau_E),l_0)$ has uniformly bounded
spatial variation. The $BV$ product rule, the $BV$ datum $\phi$, and the single
interface between initial-fed and boundary-fed characteristics (one jump of
uniformly bounded size) then yield \eqref{eq:frozen_bv_bound}.
\end{proof}

\begin{proof}[Proof of Lemma~\ref{lem:short_stability}]
Lipschitz dependence of $g$ on $E$ gives, for the flows and entrance-time maps,
$\sup_{s\le t\le\tau}\sup_l|\eta_{E_1}-\eta_{E_2}|\le C_T\tau\|E_1-E_2\|_{C}$.
Applying the translation estimate $\|f(\cdot+h)-f\|_{L^1}\le|h|\TV(f)$ to the
factors of \eqref{eq:frozen_representation} and using the uniform spatial-$BV$
bound of Lemma~\ref{lem:frozen_bv} (available because
$E_1,E_2\in\mathcal B_{M_T}^{L_E}$) together with Lemma~\ref{lem:frozen} yields
\eqref{eq:short_stability}.
\end{proof}

\begin{proof}[Proof of Theorem~\ref{thm:wellposed}]
The contraction above on the closed set $\mathcal B_{M_T}^{L_E}$ gives a unique
fixed point $E^u\in\mathcal B_{M_T}^{L_E}$ on a short interval; the population
bound \eqref{eq:Ebound} prevents escape from $\mathcal B_{M_T}$ and the uniform
Lipschitz bound \eqref{eq:E_prime_bound} prevents escape from
$\mathcal B_{M_T}^{L_E}$, so the construction iterates over $[0,T]$ with constants
independent of the starting time. The amplitude bounds are
Lemma~\ref{lem:frozen}; the spatial-$BV$ bound is Lemma~\ref{lem:frozen_bv},
which applies precisely because $E^u$ is $L_E$-Lipschitz.
\end{proof}

\begin{proof}[Proof of Proposition~\ref{prop:continuous_dependence}]
Work on a short interval $[0,\tau]$ and iterate. Subtract the characteristic
representations \eqref{eq:frozen_representation} of $x_1,x_2$ along their
respective flows $\eta_1,\eta_2$ and reaction factors $\mathcal A_1,\mathcal A_2$.
The control enters through the attenuation exponent evaluated along the
characteristic; the essential point is that $u_1,u_2$ are evaluated along different characteristics, so we split
\[
u_1(s,\eta_1(s))-u_2(s,\eta_2(s))
=\underbrace{\big[u_1(s,\eta_1(s))-u_1(s,\eta_2(s))\big]}_{\text{(a)}}
+\underbrace{\big[u_1(s,\eta_2(s))-u_2(s,\eta_2(s))\big]}_{\text{(b)}}.
\]
Term (a) is a spatial translation of the fixed $BV$ function $u_1(s,\cdot)$ by
$|\eta_1(s)-\eta_2(s)|$; the translation estimate
$\|w(\cdot+\delta)-w\|_{L^1}\le|\delta|\TV(w)$ and the uniform spatial-$BV$ bound
$\TV_\Om(u_1(s,\cdot))\le C_u$ bound its $L^1$-in-$l$ contribution by
$C_u\sup_s|\eta_1(s)-\eta_2(s)|$. Since the flows depend on $E_i$ through $g$,
$\sup_{s\le\tau}\sup_l|\eta_1-\eta_2|\le C_T\tau\,\|E_1-E_2\|_{C}$, and
$|E_1-E_2|\le\|\chi\|_\infty\|x_1-x_2\|_{L^1}$. Term (b) is evaluated along the
common characteristic $\eta_2$; a change of variables back to $l$ bounds its
contribution by $C_T\|u_1-u_2\|_{L^1((0,\tau)\times\Om)}$. Here the relevant
change-of-variables Jacobian is $\partial\eta_2(s;l)/\partial l$, the derivative of
the flow with respect to its initial size; by the variational equation
$\tfrac{d}{ds}(\partial_l\eta_2)=\partial_lg(E_2,\eta_2)\,\partial_l\eta_2$ it
satisfies
$\exp(-\!\int_0^s\|\partial_lg\|_\infty)\le\partial_l\eta_2\le
\exp(\int_0^s\|\partial_lg\|_\infty)$, so it is bounded above and below by the
local bound on $\partial_lg$ from \eqref{eq:H5} (not merely by the range of $g$). The same split applied
to the boundary factor $p_i(\tau_{E_i})/g(E_i,l_0)$ gives, by (a)-type translation
in the entrance time plus (b)-type direct difference, a bound
$C_T(\|p_1-p_2\|_{L^1(0,\tau)}+\sup_s|\tau_{E_1}-\tau_{E_2}|)$ with
$\sup_s|\tau_{E_1}-\tau_{E_2}|\le C_T\tau\|E_1-E_2\|_C$; the initial-datum factor
contributes $\|\phi_1-\phi_2\|_{L^1}$ directly. Collecting terms,
\[
\|x_1-x_2\|_{C([0,\tau];L^1)}
\le C_T\big(\|\phi_1-\phi_2\|_{L^1}+\|p_1-p_2\|_{L^1}+\|u_1-u_2\|_{L^1}\big)
+C_T\tau\,\|x_1-x_2\|_{C([0,\tau];L^1)};
\]
choosing $\tau$ with $C_T\tau\le\tfrac12$ absorbs the last term, and iterating over
$\lceil T/\tau\rceil$ intervals (with constants uniform under the common
$L^\infty$ and spatial-$BV$ bounds) gives \eqref{eq:continuous_dependence}.
\end{proof}

\section{Proofs of Section~\ref{sec:persistence}}

\begin{proof}[Proof of Proposition~\ref{prop:R0_monotone}]
A stationary renewal profile is proportional to the unit-flux profile,
$x=b\,\hat x_E$; the renewal boundary condition \eqref{eq:renewal_bc} is
homogeneous of degree one in $x$ and reduces to $\mathsf A(E;u)=1$, independent of
$b$. The closure $E=\int\chi x=b\int\chi\hat x_E$ then fixes the amplitude $b$ as in
\eqref{eq:renewal_equilibrium}, which is positive and finite when $E^{\dagger}>0$
and $\int\chi\hat x_{E^{\dagger}}>0$. At $E=0$ a positive state would force
$\int\chi x>0$, contradicting $E=0$ when $\chi>0$ on a positive-measure set.
Monotonicity in $u$ follows since increasing $u$ lowers
the survival factor; strict monotonicity in $E$ and the endpoint signs give a
unique root by the intermediate value theorem.
\end{proof}

\section{Proofs in Section~\ref{sec:control_existence}}

\begin{proof}[Proof of Lemma~\ref{lem:UadBV_compact}]
Convexity follows from convexity of the box constraint and subadditivity of total
variation. By Banach--Alaoglu a bounded sequence has a weak-$\ast$ limit $u$ with
$0\le u\le u_{\max}$; passing to the limit in \eqref{eq:TV_distributional} keeps
the variation bound, so $u\in\UadBV$.
\end{proof}

\begin{proof}[Proof of Lemma~\ref{lem:weak_time_modulus}]
Let $\psi\in W^{1,\infty}(\Om)$ with $\|\psi\|_{W^{1,\infty}}\le1$. The Green
identity \eqref{eq:green_time_identity} of Lemma~\ref{lem:green_trace}---valid for
every $\psi\in W^{1,\infty}(\Om)$, not only those vanishing at $l_m$---gives
\begin{align*}
\Big|\int_\Om\big(x^u(t+h,\cdot)-x^u(t,\cdot)\big)\psi\Big|
&\le
\int_t^{t+h}\!\!\int_\Om g(E^u,l)x^u|\partial_l\psi|\dd l\dd s
\nonumber\\
&\quad+\int_t^{t+h}\!\!\int_\Om\big(\mu(E^u,l)+u\big)x^u|\psi|\dd l\dd s
\nonumber\\
&\quad+\int_t^{t+h} p(s)|\psi(l_0)|\dd s
+\int_t^{t+h} g(E^u(s),l_m)x^u(s,l_m^-)|\psi(l_m)|\dd s.
\end{align*}
The first three terms are bounded by $C_T h$ using the uniform $L^1$ state bound
and boundedness of $g,\mu,u,p$. For the outflow term, Lemma~\ref{lem:green_trace}
gives the trace bound $g(E^u(s),l_m)x^u(s,l_m^-)\le g_{\max}C_T$ in
$L^\infty(0,T)$, so it too is bounded by $C_Th$. Taking the supremum over $\psi$
gives \eqref{eq:weak_time_estimate}.
\end{proof}

\begin{proof}[Proof of Lemma~\ref{lem:strong_time_modulus}]
Apply \eqref{eq:BV_interpolation} to $v=x^u(t+h,\cdot)-x^u(t,\cdot)$; the weak norm
is bounded by Lemma~\ref{lem:weak_time_modulus} and the $L^1+\TV$ factor by
\eqref{eq:uniform_state_BV}.
\end{proof}

\begin{proof}[Proof of Proposition~\ref{prop:strong_state_compactness}]
For each $t$, \eqref{eq:uniform_state_BV} and the compact embedding
$BV(\Om)\hookrightarrow\hookrightarrow L^1(\Om)$ give pointwise relative
compactness; Lemma~\ref{lem:strong_time_modulus} gives equicontinuity of
$t\mapsto x^u(t,\cdot)$ into $L^1(\Om)$. The vector-valued Arzel\`a--Ascoli theorem
yields relative compactness in $C([0,T];L^1(\Om))$.
\end{proof}

\begin{proof}[Proof of Proposition~\ref{prop:control_state_closed}]
By Proposition~\ref{prop:strong_state_compactness}, $x^{u_n}\to\bar x$ strongly,
hence
\[
E^{u_n}(t)=\int_\Om\chi x^{u_n}(t,\cdot)
\longrightarrow
\bar E(t):=\int_\Om\chi\bar x(t,\cdot)
\]
uniformly on $[0,T]$, so $g(E^{u_n},\cdot)\to g(\bar E,\cdot)$ and
$\mu(E^{u_n},\cdot)\to\mu(\bar E,\cdot)$ uniformly. For the bilinear term, write,
for bounded $\psi$,
\[
\int_0^T\!\!\int_\Om u_n x^{u_n}\psi
=
\int_0^T\!\!\int_\Om u_n\bar x\,\psi
+\int_0^T\!\!\int_\Om u_n(x^{u_n}-\bar x)\psi.
\]
Since $\bar x\psi\in L^1$, weak-$\ast$ convergence handles the first term; the
second is bounded by $u_{\max}\|\psi\|_\infty\|x^{u_n}-\bar x\|_{L^1}\to0$. Passing
to the limit in the weak formulation shows $\bar x$ solves the state equation with
control $u$, so $\bar x=x^u$ by uniqueness; the limit being independent of the
subsequence gives the claim.
\end{proof}

\begin{proof}[Proof of Theorem~\ref{thm:optimal_existence_BV}]
Let $(u_n)\subset\UadBV$ be maximizing. By Lemma~\ref{lem:UadBV_compact},
$u_n\overset{\ast}{\rightharpoonup}u_T^*\in\UadBV$; by
Proposition~\ref{prop:control_state_closed}, $x^{u_n}\to x^{u_T^*}$ strongly in
$C([0,T];L^1)$. Writing
\[
\int_0^T\!\!\int_\Om e^{-rt}c\,u_n x^{u_n}
=\int_0^T\!\!\int_\Om e^{-rt}c\,u_n x^{u_T^*}
+\int_0^T\!\!\int_\Om e^{-rt}c\,u_n(x^{u_n}-x^{u_T^*}),
\]
the first term converges by weak-$\ast$ convergence (since
$e^{-rt}c\,x^{u_T^*}\in L^1$) and the second to zero by strong $L^1$ convergence,
so $J_T(u_n)\to J_T(u_T^*)=V_T^{BV}$.
\end{proof}

\begin{proof}[Proof of Proposition~\ref{prop:threshold_existence}]
A maximizing sequence $L_n$ has, by $BV(0,T)\hookrightarrow\hookrightarrow
L^1(0,T)$, a subsequence $L_n\to L_T^*\in\mathcal L_{ad}$ in $L^1$, whence
$u_{L_n}\to u_{L_T^*}$ in $L^1$; continuous dependence and boundedness give
$J_T(u_{L_n})\to J_T(u_{L_T^*})$.
\end{proof}

\section{Proofs in Section~\ref{sec:adjoint}}

\begin{proof}[Proof of Proposition~\ref{prop:state_differentiability}]
Write $x^\varepsilon:=x^{u^*+\varepsilon h}$, $E^\varepsilon$ for its environment,
and $w^\varepsilon:=(x^\varepsilon-x^*)/\varepsilon-z$ for the remainder we must
show is $o(1)$ in $C([0,T];L^1)$. Continuous dependence
(Proposition~\ref{prop:continuous_dependence}) gives the uniform difference-quotient
bounds $\|x^\varepsilon-x^*\|_{C([0,T];L^1)}=O(\varepsilon)$ and
$\|E^\varepsilon-E^*\|_{C([0,T])}=O(\varepsilon)$, so
$q^\varepsilon:=(x^\varepsilon-x^*)/\varepsilon$ is bounded in $C([0,T];L^1)$ and
$\delta E^\varepsilon:=(E^\varepsilon-E^*)/\varepsilon$ is bounded in $C([0,T])$,
with $|\delta E^\varepsilon|\le\|\chi\|_\infty\|q^\varepsilon\|_{L^1}$.

\emph{Expansions.} By $E\mapsto g(E,\cdot)\in C^1(\R_+;W^{1,\infty})$ and
$E\mapsto\mu(E,\cdot)\in C^1(\R_+;L^\infty)$,
\[
g(E^\varepsilon,\cdot)-g(E^*,\cdot)=\partial_Eg(E^*,\cdot)\,(E^\varepsilon-E^*)+R_g^\varepsilon,
\quad
\mu(E^\varepsilon,\cdot)-\mu(E^*,\cdot)=\partial_E\mu(E^*,\cdot)\,(E^\varepsilon-E^*)+R_\mu^\varepsilon,
\]
with $\|R_g^\varepsilon\|_{W^{1,\infty}}+\|R_\mu^\varepsilon\|_{L^\infty}
=o(\|E^\varepsilon-E^*\|_C)=o(\varepsilon)$.

\emph{Equation for the remainder.} Subtracting the transport equations for
$x^\varepsilon$ and $x^*$, dividing by $\varepsilon$, and subtracting the
linearized equation \eqref{eq:linearized_state}, the remainder $w^\varepsilon$
solves a transport equation with the same principal part as the linearization,
\[
\partial_t w^\varepsilon+\partial_l\!\big(g(E^*,l)w^\varepsilon\big)
+\big(\mu(E^*,l)+u^*\big)w^\varepsilon
=\partial_l\!\big(\partial_Eg(E^*,l)\,\delta E^\varepsilon\,(x^\varepsilon-x^*)\big)
-\partial_E\mu\,\delta E^\varepsilon (x^\varepsilon-x^*)+\rho^\varepsilon,
\]
where every term on the right is either (i) a product of a bounded factor with one
of $x^\varepsilon-x^*=O(\varepsilon)$ or $\delta E^\varepsilon-\delta E$, or (ii) a
remainder $\rho^\varepsilon$ collecting $R_g^\varepsilon,R_\mu^\varepsilon$ tested
against the bounded $q^\varepsilon$; hence
$\|\rho^\varepsilon\|_{L^1}=o(1)$ and the quadratic products are $O(\varepsilon)$.
The boundary trace is control-independent, so $w^\varepsilon(t,l_0)$ satisfies the
homogeneous version of \eqref{eq:linearized_boundary} up to an $o(1)$ term obtained
by the same expansion of $g(E^\varepsilon,l_0)$, and $w^\varepsilon(0,\cdot)=0$.

\emph{Grönwall closure.} Testing this transport equation by $\sgn(w^\varepsilon)$
and integrating in $l$ (the outflow trace at $l_m$ has a sign that removes mass,
and the inflow trace contributes the $o(1)$ boundary term) gives
\[
\frac{d}{dt}\|w^\varepsilon(t,\cdot)\|_{L^1}
\le C_T\,\|w^\varepsilon(t,\cdot)\|_{L^1}+\beta^\varepsilon(t),
\qquad \|\beta^\varepsilon\|_{L^1(0,T)}=o(1)+O(\varepsilon),
\]
with $C_T$ the uniform coefficient bound from \eqref{eq:H2}--\eqref{eq:H5} on
$[0,M_T]$ (the $\partial_l(g\,\cdot)$ term contributes through
$\|\partial_lg\|_\infty$ exactly as in
Proposition~\ref{prop:continuous_dependence}). Since $w^\varepsilon(0,\cdot)=0$,
Grönwall gives $\|w^\varepsilon\|_{C([0,T];L^1)}\le e^{C_TT}\|\beta^\varepsilon\|_{L^1}\to0$.
Thus $q^\varepsilon\to z$ in $C([0,T];L^1)$, which is the claim; uniqueness of $z$
follows from the same Grönwall estimate applied to a difference of two solutions.
\end{proof}

\begin{proof}[Proof of Lemma~\ref{lem:Gamma_bounded}]
From \eqref{eq:Gamma_BV},
\[
|\mathcal G_{x^*}[\lambda](t)|
\le\|\lambda(t,\cdot)\|_\infty\Big[
\|\partial_Eg\|_\infty|x^*(t,l_0^+)|
+\|\partial_Eg\|_\infty\TV_\Om(x^*(t,\cdot))
+(\|\partial_l\partial_Eg\|_\infty+\|\partial_E\mu\|_\infty)\|x^*(t,\cdot)\|_{L^1}\Big],
\]
and the bracket is uniformly bounded by the state bounds and
\eqref{eq:adjoint_assumptions}.
\end{proof}

\begin{proof}[Proof of Proposition~\ref{prop:gradient_formula}]
Multiply \eqref{eq:linearized_state} by $\lambda_{\mathrm{pv}}^*=e^{-rt}\lambda^*$,
integrate over $(0,T)\times\Om$, and integrate by parts in $t$ and $l$. The
terminal, outflow, fixed-initial, and linearized-inflow conditions remove the
boundary variations; the environmental-variation terms combine into
$\mathcal G_{x^*}[\lambda^*]$, and \eqref{eq:adjoint_BV} cancels every term in $z$,
leaving $\int_0^Te^{-rt}\int_\Om x^*(c-\lambda^*)h$.
\end{proof}

\begin{proof}[Proof of Theorem~\ref{thm:variational_inequality}]
Convexity of $\UadBV$ makes $u_\varepsilon=u^*+\varepsilon(v-u^*)$ admissible for
$\varepsilon\in[0,1]$; optimality gives $J_T(u_\varepsilon)-J_T(u^*)\le0$. Divide
by $\varepsilon$, let $\varepsilon\downarrow0$, and apply
Proposition~\ref{prop:gradient_formula}.
\end{proof}

\begin{proof}[Proof of Theorem~\ref{thm:bang_bang}]
Under \eqref{eq:inactive_BV}, small one-sided $BV$ bump directions into the box,
supported in small neighborhoods, are feasible; localizing
\eqref{eq:variational_inequality} gives
$x^*S^*(v-u^*)\le0$ for all $v\in[0,u_{\max}]$ at a.e.\ Lebesgue point, equivalent
to the stated arg-max; for $x^*>0$, maximizing the affine map gives
\eqref{eq:bang_bang_rule}.
\end{proof}

\section{Proofs in Section~\ref{sec:threshold}}

\begin{proof}[Proof of Lemma~\ref{lem:sign_decomposition}]
Since $\{S^*<0\}$ is a lower set it equals $[l_0,L_-)$ and $\{S^*>0\}$ is an upper
set equal to $(L_+,l_m]$; the ordered sign condition forbids a negative value above
a positive one, so $L_-\le L_+$, and on the open gap $(L_-,L_+)$ the sign is
neither $<0$ nor $>0$, hence $S^*=0$ there, giving \eqref{eq:sign_decomp}. If both
sign sets are nonempty, $L_-,L_+\in(l_0,l_m)$ and continuity forces
$S^*(L_-)=S^*(L_+)=0$ as limits of negative (resp.\ positive) values. If, say,
$\{S^*<0\}=\varnothing$ then $L_-=l_0$ by convention and no zero is forced at
$l_0$; symmetrically when $\{S^*>0\}=\varnothing$. The converse is immediate.
\end{proof}

\begin{proof}[Proof of Theorem~\ref{thm:weak_threshold_structure}]
Lemma~\ref{lem:sign_decomposition} gives the sign pattern in each case; combined
with the bang--bang rule \eqref{eq:bang_bang_rule} (valid where $x^*>0$) it yields
$u^*=0$ where $S^*<0$ and $u^*=u_{\max}$ where $S^*>0$. In cases (i)--(ii) one sign
set is empty, so no interior threshold is forced and the conclusion holds off the
zero set; in case (iii) both endpoints are interior zeros and the open interval
$(L_-,L_+)$ is the singular set.
\end{proof}

\begin{proof}[Proof of Theorem~\ref{thm:strict_threshold_structure}]
The cases are Definition~\ref{def:strict_single_crossing} combined with
\eqref{eq:bang_bang_rule}.
\end{proof}

\begin{proof}[Proof of Proposition~\ref{prop:threshold_measurable}]
For $a\in\Om$, $\{t:L_+(t)<a\}=\bigcup_{q\in\mathbb Q,\,l_0<q<a}\{t:S^*(t,q)>0\}$
is measurable since each $S^*(\cdot,q)$ is; similarly for $L_-$.
\end{proof}

\begin{proof}[Proof of Proposition~\ref{prop:threshold_regular}]
The implicit function theorem.
\end{proof}

\begin{proof}{Proof of Proposition~\ref{prop:single_crossing_weaker}}
Strict increase gives at most one zero with the correct orientation. For the
converse, with $\theta(l)=(l-l_0)/(l_m-l_0)$ and $A>2$, set
$\displaystyle S_A(l):=\big(\theta-\tfrac12\big)\big(1+A\theta(1-\theta)\big)$.
Since $1+A\theta(1-\theta)>0$, $\operatorname{sgn}S_A=\operatorname{sgn}(\theta-\tfrac12)$, a unique
upward crossing at $L=(l_0+l_m)/2$; yet $\dd S_A/\dd\theta|_0=1-A/2<0$, so $S_A$ is
not increasing.
\end{proof}

\begin{proof}[Proof of Proposition~\ref{prop:quantitative_single_crossing}]
The sign margins exclude exterior zeros; the slope bound makes $S$ strictly
increasing on the central interval, and the endpoint signs give one zero by the
intermediate value theorem. The perturbation bounds preserve both.
\end{proof}

\section{Proofs in Section~\ref{sec:stationary_switching}}

\begin{proof}[Proof of Theorem~\ref{thm:rank_one_correction}]
The difference $\delta:=\lambda-\lambda_{\mathrm{red}}$ solves
$\bar g\delta'=a\delta-\chi\Gamma$, $\delta(l_m)=0$, which by linearity and
\eqref{eq:auxiliary_psi} equals $\Gamma\psi$. Applying $\mathcal G^*$,
$\Gamma=\mathcal G^*[\lambda]=A+\Gamma B$, so $(1-B)\Gamma=A$; under $1-B\neq0$,
$\Gamma=A/(1-B)$. Substitution gives \eqref{eq:rank_one_switching}.
\end{proof}

\begin{proof}[Proof of Lemma~\ref{lem:feedback_vanishes}]
$A,B$ are linear in $g_E,g_E',\mu_E$; the uniform $BV/L^1$ bounds on $x^*$ and
uniform bounds on $\lambda_{\mathrm{red}},\psi$ give $|A|+|B|\le C\varepsilon$;
$|1-B|\ge1-C\varepsilon$ closes the estimate.
\end{proof}

\begin{proof}[Proof of Theorem~\ref{thm:small_feedback_monotone}]
$|\Gamma|\le|A|/(1-\beta)$, so
$S'\ge\eta-\frac{|A|}{1-\beta}\|\psi'\|_\infty>0$; the endpoint signs give one
crossing.
\end{proof}

\begin{proof}[Proof of Theorem~\ref{thm:small_feedback_single_crossing}]
The value bound gives $S<0$ on the left margin and $S>0$ on the right margin, so
all zeros lie centrally; the slope bound gives $S'>0$ there; the opposite end
signs give one crossing. This is Proposition~\ref{prop:quantitative_single_crossing}
applied to $S=S_{\mathrm{red}}-\Gamma\psi$.
\end{proof}

\begin{proof}[Proof of Proposition~\ref{prop:threshold_displacement}]
$0=S(L^*)=S_{\mathrm{red}}(L^*)-\Gamma\psi(L^*)$; expanding about
$L_{\mathrm{red}}$ with $S_{\mathrm{red}}(L_{\mathrm{red}})=0$ and
$L^*-L_{\mathrm{red}}=O(\Gamma)$ gives the claim.
\end{proof}

\begin{proof}[Proof of Lemma~\ref{lem:duality}]
Since $a/\bar g=r/\bar g+q$ with $q=(\bar\mu+u)/\bar g$, the integrating factor in
\eqref{eq:lambda_red_formula} splits as
$\exp(-\int_\xi^s a/\bar g)=e^{-r\tau(\xi,s)}\exp(-\int_\xi^s q)$. Using
$x^*(\xi)=\frac{p}{\bar g(\xi)}\exp(-\int_{l_0}^\xi q)$,
\[
\bar g(\xi)\psi_r(\xi)x^*(\xi)
=p\,e^{-\int_{l_0}^\xi q}\!\int_\xi^{l_m}\frac{\chi(s)}{\bar g(s)}
e^{-r\tau(\xi,s)}e^{-\int_\xi^s q}\dd s
=\int_\xi^{l_m}\chi(s)\,e^{-r\tau(\xi,s)}\underbrace{\frac{p}{\bar g(s)}
e^{-\int_{l_0}^s q}}_{=\,x^*(s)}\dd s
=W_r(\xi).
\]
The forward survival $l_0\!\to\!\xi$ and backward survival $\xi\!\to\!s$ telescope
to $l_0\!\to\!s$; the identity uses only the closed forms and so holds for
bang--bang $u$.
\end{proof}

\begin{proof}[Proof of Theorem~\ref{thm:closure_gain_identity}]
Substitute $\psi_r'=(a\psi_r-\chi)/\bar g$ into the smooth form
\eqref{eq:stationary_G_smooth}:
\[
g_E\psi_r'-\mu_E\psi_r
=\Big(\frac{g_E a}{\bar g}-\mu_E\Big)\psi_r-\frac{g_E\chi}{\bar g}.
\]
With $a=r+\bar\mu+u$, the algebraic identities
$\frac{g_E(\bar\mu+u)}{\bar g}-\mu_E=-\bar g\,b$ and $-g_E/\bar g=\alpha$ give
$\frac{g_E a}{\bar g}-\mu_E=-\bar g\,b-r\alpha$ and $-g_E\chi/\bar g=\alpha\chi$,
hence
\[
g_E\psi_r'-\mu_E\psi_r=(-\bar g\,b-r\alpha)\psi_r+\alpha\chi.
\]
Multiplying by $x^*$ and integrating,
\[
B(r)
=-\int b\,(\bar g\psi_r x^*)
-r\int\alpha\,\psi_r x^*
+\int\alpha\chi x^*.
\]
By Lemma~\ref{lem:duality}, $\bar g\psi_r x^*=W_r$ and
$\alpha\psi_r x^*=\frac{\alpha}{\bar g}W_r$, giving \eqref{eq:Br_general}. At $r=0$,
$W_0=W_{E^*}$, so $B(0)=\mathsf A(E-\int bW_{E^*}=\mathsf A-\mathsf C=\Phi'(E^*)$
by \eqref{eq:Phi_prime_tail}.

For the limit, change variables from size $s$ to travel time $t=\tau(\xi,s)$,
$\dd s=\bar g(s)\dd t$, in \eqref{eq:Wr}:
\[
W_r(\xi)=\int_0^{\tau(\xi,l_m)}H_\xi(t)\,e^{-rt}\dd t,
\qquad
H_\xi(t):=\chi(s(\xi,t))\,x^*(s(\xi,t))\,\bar g(s(\xi,t)),
\]
where $s(\xi,t)$ is the size reached from $\xi$ after travel time $t$, so
$H_\xi(0)=\chi(\xi)x^*(\xi)\bar g(\xi)$. Under the stated continuity, $H_\xi$ is
continuous at $t=0$ and uniformly bounded, so the approximate-identity (Abelian)
estimate gives $rW_r(\xi)\to H_\xi(0)=\chi(\xi)x^*(\xi)\bar g(\xi)$ as
$r\to\infty$, with $0\le rW_r(\xi)\le\sup H_\xi<\infty$ uniformly in $\xi$. By
dominated convergence,
\[
r\int_{l_0}^{l_m}\frac{\alpha(\xi)}{\bar g(\xi)}W_r(\xi)\dd\xi
\longrightarrow
\int_{l_0}^{l_m}\frac{\alpha(\xi)}{\bar g(\xi)}\chi(\xi)x^*(\xi)\bar g(\xi)\dd\xi
=\mathsf A(E,
\]
while $0\le W_r(\xi)\le(\sup H_\xi)/r\to0$ gives $\int b\,W_r\to0$. Hence
$B(r)\to\mathsf A(E-0-\mathsf A(E=0$.
\end{proof}

\begin{proof}[Proof of Corollary~\ref{cor:nonresonance_consequences}]
(a) is Theorem~\ref{thm:closure_gain_identity} with
$\Phi'(E^*)<0$ and continuity of $r\mapsto B(r)$. (b) follows from
$B(r)\le\mathsf A^*<1$.
\end{proof}

\begin{proof}[Proof of Theorem~\ref{thm:validated_certificate}]
The first two conditions exclude exterior zeros; the derivative enclosure makes
$S$ strictly increasing on $I_k$; the endpoint signs and the intermediate value
theorem give one upward zero in $I_k$.
\end{proof}

\begin{proof}[Proof of Proposition~\ref{prop:validated_multiple_switch}]
The intermediate value theorem gives a zero in each of $(q_1,q_2)$, $(q_2,q_3)$.
\end{proof}

\bibliography{refs}

\end{document}